\DeclareMathOperator{\diam}{diam}
\DeclareMathOperator{\dist}{dist}
\DeclareMathOperator{\lip}{Lip}
\DeclareMathOperator{\spt}{spt}
\DeclareMathOperator{\Hom}{Hom}
\DeclareMathOperator{\im}{im}
\DeclareMathOperator{\lin}{span}
\DeclareMathOperator*{\aplimsup}{ap\,lim\,sup}
\DeclareMathOperator{\ap}{ap}
\DeclareMathOperator{\cnt}{\mathscr{C}}
\DeclareMathOperator{\id}{id}
\DeclareMathOperator{\extalg}{{\textstyle \bigwedge}}
\DeclareMathOperator{\aff}{aff}
\newcommand{\density}{\boldsymbol{\Theta}}
\newcommand{\eqLpnorm}[3]{{(#1)}_{({#2})}({#3})}
\newcommand{\esssups}[1]{({#1}) \operatorname*{ess\,sup}}
\newcommand{\esssup}[1]{\bigl({#1}\bigr) \operatorname*{ess\,sup}}
\newcommand{\AD}[1]{\operatorname{AD}({#1})}
\newcommand{\without}{\sim}
\newcommand{\simp}{\bigtriangleup}
\newcommand{\interior}[1]{\operatorname{Int}{#1}}
\newcommand{\Tan}[2]{\operatorname{Tan}({#1},{#2})}
\newcommand{\apTan}[3]{\operatorname{Tan}^{#1}({#2},{#3})}
\newcommand{\tfint}[2]{{\textstyle\fint_{#1}^{#2}}}
\newcommand{\unitmeasure}[1]{\boldsymbol{\alpha}(#1)}
\newcommand{\ud}{\ensuremath{\,\mathrm{d}}}
\newcommand{\restrict}{ \mathop{ \rule[1pt]{.5pt}{6pt} \rule[1pt]{4pt}{0.5pt} }\nolimits }
\newcommand{\grass}[2]{\mathbf{G}\left(#1,#2\right)}
\newcommand{\OP}{\mathbf{O}^\ast}
\newcommand{\proj}[1]{#1_\natural}
\newcommand{\tbcup}{{\textstyle \bigcup}}
\newcommand{\pproj}[1]{{#1}_\natural^\perp}
\newcommand{\lIm}{[}
\newcommand{\rIm}{]}
\newcommand{\tcup}{\textstyle{\bigcup}}
\newcommand{\R}{\mathbb{R}}
\newcommand{\N}{\mathbb{N}}
\newcommand{\kav}{\mathcal{K}}
\newcommand{\mlc}{\kappa}
\newcommand{\HM}{\mathcal{H}}
\newcommand{\LM}{\mathcal{L}}
\newcommand{\oball}[2]{\mathbf{U}({#1},{#2})}
\newcommand{\cball}[2]{\mathbf{B}({#1},{#2})}
\newcommand{\cyl}[2]{\mathbf{C}({#1},{#2})}
\newcommand{\pp}{\mathfrak{p}}
\newcommand{\qq}{\mathfrak{q}}
\newcommand{\hmin}{h_{\min}}
\definecolor{mygreen}{RGB}{50,70,0}
\theoremstyle{plain}
\newtheorem{thm}{Theorem}[section]
\newtheorem{lem}[thm]{Lemma}
\newtheorem{prop}[thm]{Proposition}
\newtheorem{cor}[thm]{Corollary}
\theoremstyle{remark}
\newtheorem{rem}[thm]{Remark}
\newtheorem*{rem*}{Remark}
\theoremstyle{definition}
\newtheorem{defin}[thm]{Definition}
\setlist[enumerate,1]{label=(\alph*), ref=(\alph*), leftmargin=*}
\author{S{\l}awomir Kolasi{\'n}ski}
\address{Max Planck Institute for Gravitational Physics
  (Albert Einstein Institute)\\
  Am~M{\"u}hlen\-berg~1, D-14476 Golm\\
  Germany}
\email{skola@mimuw.edu.pl}
\keywords{rectifiability of higher order, Lusin property for sets, Menger curvature}
\subjclass{Primary: 28A75; Secondary: 49Q15}
\date{\today}
\title{Higher order rectifiability of measures via averaged discrete curvatures}
\begin{document}

\begin{abstract}
    We provide a sufficient geometric condition for~$\R^n$ to be countably
    $(\mu,m)$ rectifiable of class~$\cnt^{1,\alpha}$ (using the terminology of
    Federer), where $\mu$ is a Radon measure having positive lower density and
    finite upper density $\mu$ almost everywhere. Our condition involves
    integrals of certain many-point interaction functions (discrete curvatures)
    which measure flatness of simplices spanned by the parameters.
\end{abstract}

\maketitle

\section{Introduction}
\label{sec:intro}

Let $\HM^m$ denote the $m$~dimensional Hausdorff measure over~$\R^n$. A~measure
$\mu$ is said to be \emph{$\AD m$ regular} if there exists $A \in [1,\infty)$
such that $A^{-1} \le r^{-m} \mu(\cball xr) \le A$ for $x \in \spt \mu$ and $r
\in (0,1)$. We say that $\mu$ is~\emph{uniformly rectifiable} if it has the
\emph{big pieces of Lipschitz images} property, which means that it is $\AD m$
regular and there exist $\theta, M \in (0,\infty)$ such that for $x \in \spt
\mu$ and $r \in (0,1)$ there exists a Lipschitz map $f : \cball 0r \cap \R^m \to
\R^n$ such that $\lip f < M$ and $\mu(\cball xr \cap f\lIm \cball 0r \rIm) \ge
\theta r$. David and Semmes~\cite{DS91,DS93} were studying uniformly rectifiable
sets $\Sigma \subseteq \R^n$ in the context of harmonic analysis and in the
search for a~geometric criterion yielding boundedness of certain singular
integral operators on~$L^2(\HM^m \restrict \Sigma)$. To characterize these sets,
they introduced the \emph{beta-numbers} defined for $p \in [1,\infty)$, $x \in
\R^n$, $r \in (0,\infty)$ as follows
\begin{displaymath}
    \beta^m_{\mu,p}(x,r) = r^{-1} \inf_{L}
    \bigl( r^{-m} {\textstyle \int_{\cball xr}} \dist(y,L)^p \ud \mu(y) \bigr)^{1/p} \,,
\end{displaymath}
where the infimum is taken with respect to all affine $m$~dimensional planes~$L$
in~$\R^n$. For $p,q \in [1,\infty)$ and $B \subseteq \R^n$ Borel we set
\begin{displaymath}
    J_{\mu,p,q}(B) =
    {\textstyle \int_{B} \int_0^{\diam B} \beta^m_{\mu,q}(x,r)^p \frac{\ud r}{r} \ud \mu(x)} \,.
\end{displaymath}
Following~\cite[Definition~1.2 on p.~313]{DS93} we say that $\mu$ satisfies the
\emph{$(p,q)$ geometric lemma} if $\mu$ is $\AD m$ regular and there exists some
$C \in (0,\infty)$ such that $J_{\mu,p,q}(B) \le C \mu(B)$ for all balls $B
\subseteq \R^n$. By~\cite[p.~22]{DS93} measures which satisfy the $(2,2)$
geometric lemma are uniformly rectifiable.

If $T = (a_0,\ldots,a_{m+1}) \in (\R^n)^{m+2}$, let~$\simp T$ be the convex hull
of the set $\{a_0,\ldots,a_{m+1}\}$. For $B \subseteq \R^n$ Borel we define the
\emph{integral Menger curvature} of a measure $\mu$ on~$B$ by
\begin{displaymath}
    \mathcal M_{\mu}(B) = 
    {\textstyle \int_{B^{m+2}} \frac{\HM^{m+1}(\simp(a_0,\ldots,a_{m+1}))^2}{\diam(\{a_0,\ldots,a_{m+1}\})^{(m+2)(m+1)}}
    \ud \mu^{m+2}(a_0,\ldots,a_{m+1})} \,.
\end{displaymath}
This was (up to a constant) one of the functionals considered by Lerman and
Whitehouse in~\cite{LW09,LW11} were they showed that if $\mu$ is $\AD
m$~regular, then~$J_{\mu,2,2}$ is comparable to~$\mathcal M_{\mu}$ on balls
establishing a~new characterization of uniform rectifiability and providing a
connection between the beta-numbers and Menger-type curvatures.
In~\cite[Corollary~6.1]{LW09} a~characterization of measures satisfying the
$(p,p)$~geometric lemma for $1 \le p < \infty$ is also given.

Following Federer~\cite[3.2.14]{Fed69} and Anzellotti and Serapioni~\cite{AS94}
we say that a set $E \subseteq \R^n$ is \emph{countably $(\mu,m)$~rectifiable of
  class~$\cnt^{1,\alpha}$} if there exists a countably family $\mathcal A$ of
$m$~dimensional submanifolds of~$\R^n$ of class~$\cnt^{1,\alpha}$ such that
$\mu(E \without \tbcup \mathcal A) = 0$. Let $\density_*^m(\mu,a)$ and
$\density^{m*}(\mu,a)$ denote the lower an upper densities of~$\mu$ at $a \in
\R^n$ as defined in~\cite[2.10.19]{Fed69}.

Quite recently Meurer~\cite{Meu15a} proved, assuming a~priori that $\Sigma
\subseteq \R^n$ is merely~\emph{Borel} and $\mu = \HM^m \restrict \Sigma$, that
if $\mathcal M_{\mu}(\R^n) < \infty$, then $\Sigma$ is countably
$(\HM^m,m)$~rectifiable of class~$\cnt^{1}$. His work can be seen as a~higher
dimensional counterpart of the result of David~\cite{Dav98}
and~L{\'e}ger~\cite{Leg99} done in connection with the famous Vitushkin's
conjecture on removable sets for bounded analytic functions. Azzam and
Tolsa~\cite{AT15} and Tolsa~\cite{Tol15} proved that if $\mu$ is Radon and
satisfies $0 < \density^{m*}(\mu,a) < \infty$ for $\mu$ almost all~$a$, then
$\R^n$ is countably $(\mu,m)$~rectifiable of class~$\cnt^1$ \emph{if and only
  if} $\int_0^{1} \beta^m_{\mu,2}(x,r)^2 \frac{\ud r}{r} < \infty$ for $\mu$
almost all~$a$. However, in this generality the quantities considered
in~\cite{Meu15a} and in~\cite{AT15,Tol15} are not known to be directly
comparable as was the case in the $\AD m$ regular case due to~\cite{LW09,LW11}.


Higher order rectifiability, mostly of functions
(see~\cite[Definition~2.5]{AS94}), has also been studied for a long time.
In~the context of functions it is rather called a \emph{Lusin-type
  approximation}. Calder{\'o}n and Zygmund~\cite[Theorems 9 and 13]{CZ61},
Re{\v{s}}etnjak~\cite{Res68} and, more recently, Liu and Tai~\cite{LT94}, Lin
and Liu~\cite{LL13} gave conditions for higher order rectifiability of functions
in terms of existence of approximating polynomials at almost all points.
The~classical Alexandrov's theorem (see~\cite{Ale39} or~\cite{Fu11}) and its
generalization by Alberti~\cite{Alb94} can be seen as a~$\cnt^2$ rectifiability
result for convex functions.

Higher order rectifiability is an important feature of sets in geometric
analysis. It was observed by Sch{\"a}tzle~\cite[\S3]{Sch09} that it can be used
for proving regularity of sets governed by a~PDE. This philosophy was employed
later by Menne~\cite{Men09,Men10,Men11,Men12} and Menne and the
author~\cite{KM15} for showing certain regularity results for varifolds.

In the present article we provide a sufficient condition for rectifiability of
class~$\cnt^{1,\alpha}$ in terms of functionals similar to $\mathcal M_{\mu}$.
Whenever $\mu$ is a measure over~$\R^n$, and $l \in \{1,2,\ldots,m+2\}$, and
$\alpha \in [0,1]$, and $p \in [1,\infty)$, and $a, a_0, \ldots, a_{m+1} \in
\R^n$, and $T = (a_0,\ldots,a_{m+1})$, and $r \in (0,\infty]$ we set
\begin{gather}
    \mlc(T) = \frac{\HM^{m+1}(\simp T)}{\diam(\simp T)^{m+1}} 
    \quad \text{if $\diam(\simp T) > 0$} \,,
    \quad
    \mlc(T) = 0 \quad \text{otherwise} \,,
    \\
    \mlc^{l,p,\alpha}_{\mu,a,r}(a_1,\ldots,a_{l-1}) = 
    \esssup{\mu^{m+2-l}}_{a_l,\ldots,a_{m+1} \in \cball ar}
    \frac{\mlc(a,a_1,\ldots,a_{m+1})^p}{\diam(\{a,a_1,\ldots,a_{m+1}\})^{m(l-1) + \alpha p}} \,,
    \\
    \label{eq:def-kav}
    \kav^{l,p,\alpha}_{\mu}(a,r) 
    = \int_{\cball ar} \cdots \int_{\cball ar} 
    \mlc^{l,p,\alpha}_{\mu,a,r}(b_1,\ldots,b_{l-1})
    \ud \mu(b_1) \cdots \ud \mu(b_{l-1})
\end{gather}
with the understanding that there is no essential supremum in case $l = m+2$ and
there is no integral in case $l=1$.

If $p > m(l-1)$, and $\alpha = 1 - m(l-1)/p$, and $\mu = \HM^m \restrict \Sigma$
for some compact set $\Sigma \subseteq \R^n$, then $\mathcal E(\mu) = \int
\kav^{l,p,\alpha}_{\mu}(a,\infty) \ud \mu(a)$ coincides with one of the
functionals analyzed by Strzelecki and von der Mosel~\cite{SvdM11a}, Blatt and
the author~\cite{BK12}, Szuma{\'n}ska and the author~\cite{KolSzum}, Strzelecki
and von der Mosel and the author~\cite{KSM13,KSM15}, and by the
author~\cite{Kol15a}. Analogues of the Morrey-Sobolev embedding theorem for
sets, where $\mlc$ plays roughly the role of the second weak derivative, were
studied in~\cite{SvdM11a,Kol15a,KolSzum}. Geometric characterizations of graphs
of certain (fractional) Sobolev maps were given in~\cite{BK12,KSM13}.
In~\cite{KSM15} the authors use $\mathcal E$ to solve geometric variational
problems with topological constraints. In~\cite{BK12,KSM13} a full geometric
characterization of graphs of some (fractional) Sobolev maps is established.

If~$l = m+2$, and $p = 2$, and $\alpha = 0$, then $\mathcal E(\mu)$ equals
$\mathcal M_{\mu}(\R^n)$ which was considered in~\cite{LW09,LW11,Meu15a}.
Formally, if one sets $l = m+2$, and $p = 2$, and $\alpha = m - (m+2)(m+1)/2$,
then $\mathcal E$ yields also one of the quantities used in~\cite[\S4]{LW12} to
approximate the least square error of a measure.

Our main result reads as follows.
\begin{thm}
    \label{thm:main}
    Let $\mu$ be a Radon measure over $\R^n$ such that
    \begin{equation}
        \label{eq:density-bounds}
        0 < \density_*^m(\mu,x) \le \density^{m*}(\mu,x) < \infty
        \quad \text{for $\mu$ almost all $x$} \,,
    \end{equation}
    and $l \in \{1,2,\ldots,m+2\}$, and $\alpha \in (0,1]$, and $p \in
    [1,\infty)$. Assume $\kav^{l,p,\alpha}_{\mu}(a,1) < \infty$ for $\mu$ almost
    all~$a$. Then $\R^n$ is countably $(\mu,m)$~rectifiable of
    class~$\cnt^{1,\alpha}$ and $\mu$ is absolutely continuous with respect
    to~$\HM^m$.

    Moreover, if $\alpha < 1$, then for any $\varepsilon \in (0,1-\alpha)$ there
    exists a measure $\mu$ satisfying~\eqref{eq:density-bounds} and
    $\kav^{l,p,\alpha}_{\mu}(a) < \infty$ for $\mu$~almost all~$a$ and such that
    $\R^n$~is not countably $(\mu,m)$~rectifiable of
    class~$\cnt^{1,\alpha+\varepsilon}$.
\end{thm}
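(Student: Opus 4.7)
The plan is to combine a controlled affine approximation at $\mu$-almost every point, derived from the pointwise finiteness of $\kav^{l,p,\alpha}_\mu(\cdot,1)$, with a Whitney-type extension theorem of class~$\cnt^{1,\alpha}$. First I would localize: via Lusin--Egorov and a standard exhaustion I restrict to a compact set $E \subseteq \spt\mu$ of large $\mu$-measure on which both densities in~\eqref{eq:density-bounds} are comparable to a single positive constant at all scales $r \le r_0$, on which $\kav^{l,p,\alpha}_\mu(\cdot,1)$ is uniformly bounded by some $K$, and on which the tail contribution to~\eqref{eq:def-kav} from scales in $(0,r_0]$ is uniformly small.

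Next I would extract an affine $m$-plane at the correct rate. For $a \in E$ and $r \in (0,r_0]$, the density bounds give $\mu^{l-1}(\cball{a}{r}^{l-1}) \asymp r^{m(l-1)}$, and Chebyshev applied to~\eqref{eq:def-kav} produces a $\mu^{l-1}$-majority of tuples $(b_1,\dots,b_{l-1}) \in \cball{a}{r}^{l-1}$ on which $\mlc^{l,p,\alpha}_{\mu,a,r}(b_1,\dots,b_{l-1}) \lesssim r^{-m(l-1)}$. Combined with $\diam \le 2r$, the definition of $\mlc^{l,p,\alpha}_{\mu,a,r}$ then yields, for $\mu^{m+2-l}$-almost every completion,
\begin{equation}
\mlc(a,b_1,\dots,b_{m+1}) \le C(K) \, r^{\alpha} .
\end{equation}
Using AD-type regularity on~$E$ to supply $(m+1)$-tuples spanning a non-degenerate simplex of $m$-face area $\asymp r^m$ at every small scale, one converts this into a uniform bound $\dist(x,L_{a,r}) \lesssim r^{1+\alpha}$ valid for every $x \in \spt\mu \cap \cball{a}{r}$, where $L_{a,r}$ is the affine plane spanned by $a$ and a good $m$-tuple. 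Comparing $L_{a,r}$ with $L_{a,2r}$ through shared simplices, and similarly at two nearby base-points $a,a'\in E$, gives a limit plane $L_a = a+T_a$ with $a \mapsto T_a$ Hölder continuous of exponent~$\alpha$ into $\grass{n}{m}$, and a final distance bound $\dist(x,L_a) \lesssim |x-a|^{1+\alpha}$.

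With this Hölder tangent assignment on~$E$ together with the pointwise distance bound, I would invoke the $\cnt^{1,\alpha}$ Whitney extension theorem in the spirit of~\cite[3.1.15]{Fed69} to realize $E$, up to a $\mu$-null set, as a subset of a single $m$-dimensional $\cnt^{1,\alpha}$ submanifold of~$\R^n$. Exhausting $\spt\mu$ by countably many such~$E$'s yields countable $(\mu,m)$-rectifiability of class~$\cnt^{1,\alpha}$, and $\mu \ll \HM^m$ then follows from $\density^{m*}(\mu,\cdot) < \infty$ combined with rectifiability. The main technical obstacle is the inner essential supremum in the definition of $\kav^{l,p,\alpha}_\mu$: it furnishes only a $\mu^{m+2-l}$-almost-everywhere bound on~$\mlc$, and one cannot point at any specific simplex. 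To circumvent this I would exploit the density bounds to argue that the $\mu^{m+2-l}$-exceptional set never obstructs the existence of geometrically well-spread $(m+1)$-tuples completing a good flag, in the spirit of the slicing arguments of~\cite{LW09,Meu15a}.

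For the sharpness assertion, given $\varepsilon \in (0,1-\alpha)$, I would construct a self-similar Cantor-type set $\Sigma \subseteq \R^n$ obtained by iterating copies of a slightly tilted flat model, with generation-$k$ tilt $\theta_k$ and diameter $\lambda_k$ chosen so that the relevant generation-$k$ contribution to $\kav^{l,p,\alpha}_\mu$ is summable but $\theta_k \gg \lambda_k^{\alpha+\varepsilon}$ along a subsequence. Then $\mu = \HM^m \restrict \Sigma$ satisfies~\eqref{eq:density-bounds}, has $\kav^{l,p,\alpha}_\mu$ $\mu$-almost-everywhere finite, yet its tangent field is nowhere $(\alpha+\varepsilon)$-Hölder, which precludes countable $(\mu,m)$-rectifiability of class $\cnt^{1,\alpha+\varepsilon}$. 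This parallels classical sharpness examples for Reifenberg flatness and for Menger-type curvature conditions.
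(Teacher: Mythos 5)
Your high-level scheme --- localize via Egorov to a compact piece with roughly AD-regular behaviour and uniformly bounded $\kav^{l,p,\alpha}_\mu(\cdot,1)$, use Chebyshev on~\eqref{eq:def-kav} to produce ``fat'' spanning $m$-tuples, extract approximating $m$-planes, and show the plane map is H\"older --- matches the structure of the paper's proof (section~\ref{sec:balanced} and Lemma~\ref{lem:height-control}). The divergence is in the final step, and this is where a genuine gap appears.

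You propose to finish by Whitney extension of class $\cnt^{1,\alpha}$ in the sense of~\cite[3.1.15]{Fed69}, which requires a \emph{pointwise} Taylor estimate $\dist(x,a+T_a)\lesssim |x-a|^{1+\alpha}$ valid for all $x$ in the good set and a uniform H\"older bound $\|\proj{T_a}-\proj{T_b}\|\lesssim |a-b|^\alpha$. For $l<m+2$ the inner essential supremum in the definition of $\mlc^{l,p,\alpha}_{\mu,a,r}$ does deliver, after Chebyshev and~\ref{rem:mlc-lower-bound}, a bound on $\esssup_{b\in\cball ar}\dist(b-a,P)$, which by continuity of $\dist(\cdot,P)$ propagates to all of $\spt\mu\cap\cball ar$; there your route is plausible (modulo the technicalities of setting up the graph coordinates). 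But for $l=m+2$ there is \emph{no} essential supremum in the definition: the last point is integrated, and Lemma~\ref{lem:height-control}\,(b) accordingly yields only the $L^p$-average bound
\[
\Bigl(\fint_{\cball ar}\dist(c-a,T(a))^p\,\mathrm{d}\mu(c)\Bigr)^{1/p}\le C\,\kav^{m+2,p,\alpha}_\mu(a,4r)^{1/p}r^{1+\alpha},
\]
not a pointwise bound. Your sentence ``one converts this into a uniform bound $\dist(x,L_{a,r})\lesssim r^{1+\alpha}$ valid for every $x\in\spt\mu\cap\cball ar$'' is simply false when $l=m+2$, which is exactly the integral Menger curvature case, arguably the most important one. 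The classical Whitney theorem cannot digest an $L^p$-Taylor remainder. This is precisely why the paper does not use Whitney: it first establishes $\cnt^1$ rectifiability via Allard's Theorem~\ref{thm:allard-rect} (using only that $\apTan m\mu a\subseteq T(a)$, which does follow from the $L^p$ bound via~\ref{prop:apTan-subset-T} and~\ref{rem:r-limit}), thereby reducing to the situation of a graph of a $\cnt^1$ function, and then applies the Sch\"atzle-type Lusin approximation~\ref{prop:schatzle}/\ref{cor:main-rect-cond}, whose condition~\eqref{eq:limsup-int-cond} is an \emph{integral} condition on the Taylor remainder --- exactly what the $l=m+2$ case delivers. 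To salvage your route one would need an $L^p$-flavoured Whitney extension theorem (Feffermann-type), which is far from~\cite[3.1.15]{Fed69}.

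On the sharpness claim, your proposed self-similar Cantor construction is heuristic: making such a set both $\AD m$-regular with finite $\kav^{l,p,\alpha}_\mu$ and provably \emph{not} countably $(\HM^m,m)$-rectifiable of class $\cnt^{1,\alpha+\varepsilon}$ (against all countable covers, not just a single manifold) requires substantial work and is not routine. The paper instead cites the explicit example of~\cite[Appendix]{AS94}, which gives a graph of a $\cnt^{1,\alpha+\varepsilon/2}$ function that is not $\cnt^{1,\alpha+\varepsilon}$-rectifiable; the finiteness of $\kav$ on such graphs is then a short computation (Corollary~\ref{cor:finite-on-smooth}). Your construction is not obviously wrong, but it is not carried out, whereas the paper's is essentially a one-line reduction to a known example.
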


The converse of Theorem~\ref{thm:main} does not hold due to the
example~\cite{KolSzum}. In view of the characterization of graphs of functions
of Sobolev-Slobodeckij class~$\cnt^1 \cap W^{1+\alpha,p}$ obtained
in~\cite{BK12} one could expect that finiteness $\mu$~almost everywhere of
$\kav^{l,p,\alpha}_{\mu}$ should rather characterize ``rectifiability of
class~$W^{1+\alpha,p}$'' -- a~notion not yet defined.

We prove the first part of~\ref{thm:main} in section~\ref{sec:hor-menger}. First
we use standard methods of geometric measure theory to reduce the problem to the
case when $\mu$ is roughly $\AD m$ regular, which is possible because we assume
$0 < \density_*^m(\mu,x)$ for $\mu$ almost all~$x$. Then, for $\mu$ almost all
points~$x$, we find $m$~dimensional planes that approximate the measure in
smaller and smaller scales around~$x$ and, due to the condition $\alpha > 0$,
we~prove that they converge in the Grassmannian to some plane which must contain
the approximate tangent cone of~$\mu$ at~$x$ -- this is the heart of the proof;
see~\ref{lem:height-control}. From there we conclude using~\cite[2.8(5)]{All72}
that $\R^n$~is countably $(\mu,m)$~rectifiable of class~$\cnt^1$. This allows to
reduce the problem farther to the case when $\mu = \HM^m \restrict \Sigma$,
where $\Sigma$ is a subset of a graph of some $\cnt^1$ function. Next, we use
the decay rates obtained in~\ref{lem:height-control} together
with~\ref{cor:main-rect-cond}, which is a corollary of~\cite[Lemma A.1]{Sch09},
to get the conclusion.

The proof of the second part of~\ref{thm:main} is contained in
section~\ref{sec:sharp}. It bases on the fact that for any $0 \le \alpha < \beta
\le 1$ there exists a set which is a graph of some $\cnt^{1,\alpha}$ map but is
not countably $(\HM^m,m)$ rectifiable of class~$\cnt^{1,\beta}$ which can be
deduced from~\cite[Appendix]{AS94}. In section~\ref{sec:sharp} we also give
examples of other functions that could be used in~\ref{thm:main} in place
of~$\mlc$.

Our result can be seen as an extension of~\cite{Meu15a} but is \emph{not}
stronger. A~glance at the outline of the proof given above reveals why our
problem is extremely simpler than that considered in~\cite{Meu15a}
or~\cite{AT15}. The first reason is that we assume $\alpha > 0$ which
immediately gives convergence of the approximating planes
in~\ref{lem:height-control}. The second, but actually the crucial one, is that
we assume $\density_*^m(\mu,a) > 0$ for $\mu$~almost all~$a$ which allows to
reduce, roughly, to the $\AD m$ regular case.

\section{Notation}
\label{sec:notation}
In principle we shall use the book of Federer~\cite{Fed69} as our main reference
and source of definitions, and we shall adopt some, but not all, of its
notation. In~particular we shall write $\{ x \in X : P(x) \}$, in contrast to $X
\cap \{ x : P(x) \}$, for the set of those $x \in X$ which satisfy some
predicate $P$. We also prefer to say that a function is ``injective'' rather
than ``univalent''. The symbols $\R$ and $\N$ shall be used for the set of real
and natural numbers including zero. Moreover, whenever $s,t \in \R \cup
\{-\infty,\infty\}$ and $s < t$, we shall write $(s,t)$, $[s,t]$ for the open
and closed intervals in~$\R$ and also $(s,t]$ and $[s,t)$ with the usual
meaning. If $A$ and $B$ are sets, we write $A \without B$ for the set theoretic
difference. If $X$ is a vector space, $A,B \subseteq X$, $c \in X$ and $r \in
(0,\infty)$ we adopt the notation $c + A = \{ c + a : a \in A \}$, $rA = \{ r a
: a \in A \}$ and $A + B = \{ a+b : a \in A \,,\ b \in B \}$. When we write
$\R^n$ we always mean the $n$ dimensional Euclidean space with the standard
scalar product denoted $u \bullet v$ for $u,v \in \R^n$. We~shall write $\oball
ar$ and $\cball ar$ for the open and closed ball centered at~$a$ and of
radius~$r$ in the metric space to which~$a$ belongs to. We~adopt the definition
of a~\emph{measure} from~\cite[2.1.2]{Fed69}, which is sometimes called
an~\emph{outer measure} in the literature. The~symbols $\HM^m$ and $\LM^m$ stand
for the $m$~dimensional Hausdorff and Lebesgue (outer) measures as defined
in~\cite[2.10.2(1) and 2.6.5]{Fed69}. Whenever $m \in \N \without \{0\}$ we use
the symbol $\unitmeasure{m}$ for the Lebesgue measure of the unit ball
in~$\R^m$. If~$X$ and~$Y$ are normed vector spaces, $U \subseteq X$ is open, $k
\in \N$, and $\alpha \in [0,1]$, then a~function $f : U \to Y$ is said to be of
class $\cnt^{k,\alpha}$ if $f$ is continuous, has continuous derivatives up to
order $k$ (cf.~\cite[3.1.1\,,\,3.1.11]{Fed69}), and the $k^{\text{th}}$ order
derivative $D^kf$ satisfies the H{\"o}lder condition with exponent~$\alpha$
(cf.~\cite[5.2.1]{Fed69}); in this case we write $f \in \cnt^{k,\alpha}(U,Y)$.
The~image of a set $A \subseteq X$ under a mapping $f : X \to Y$ is denoted
$f\lIm A \rIm$ and similarly $f^{-1}\lIm B\rIm$ denotes the preimage of a set $B
\subseteq Y$. We write $\id_X$ for the identity function on~$X$. Whenever $X$ is
a metric space, $A \subseteq X$, and $x \in X$, we use the notation $\dist(x,A)$
for the distance of~$x$ from~$A$. We write $A^l$ to denote the Cartesian product
of~$l \in \N \without \{0\}$ copies of a set~$A$ and if $f : A \to B$, then $f^l
: A^l \to B^l$ is the Cartesian product of $l$~copies of~$f$, i.e.,
$f^l(a_1,\ldots,a_l) = (f(a_1),\ldots,f(a_l))$. Similarly, $\mu^l$ shall denote
the product of $l$~copies of a measure~$\mu$ (cf.~\cite[2.6.1]{Fed69}). For the
Grassmannian of $m$~dimensional planes in $\R^n$ we write $\grass nm$
(cf.~\cite[1.6.2]{Fed69}). With each $P \in \grass nm$ we associate the
orthogonal projection $\proj P : \R^n \to P \subseteq \R^n$ onto $P$ and the
orthogonal complement $P^{\perp} = \ker \proj P$. Whenever $v_1,\ldots,v_k$ are
vectors in some vector space $X$, we write $\lin\{v_1,\ldots,v_l\}$ for the
linear span of these vectors. If $\mu$ measures some set $X$, $f : X \to \R$ is
$\mu$-measurable and $A \subseteq X$ is $\mu$-measurable, we write $\fint_A f
\ud \mu = \mu(A)^{-1} \int_A f \ud \mu$ for the \emph{mean value} of~$f$
on~$A$. By~$X \ni x \mapsto f(x)$ we mean an \emph{unnamed function} with domain
$X$ mapping~$x \in X$ to~$f(x)$. For the \emph{essential supremum} of a function
$f : X \to \R$ with respect to a~measure~$\mu$ over~$X$ we write
$\esssups{\mu}(f)$, which is defined to be equal to $\eqLpnorm{\mu}{\infty}{f}$
in the notation of~\cite[2.4.12]{Fed69}. To optimize space we shall sometimes
write $\esssups{\mu}_{x \in X}(f(x))$ instead of $\esssups{\mu}(X \ni x \mapsto
f(x))$.

The reader might want to recall the definitions of: the space of orthogonal
projections $\OP(n,m)$~\cite[1.7.4]{Fed69}, the exterior algebra $\extalg_* X$
of a vector space $X$ with its associated wedge
product~$\wedge$~\cite[1.3]{Fed69}, tangent cone $\Tan Sx$ of a set $S \subseteq
\R^n$~\cite[3.1.21]{Fed69} at~$x \in \R^n$, approximate $m$-tangent cone
$\apTan{m}{\mu}{x}$ of a measure $\mu$~\cite[3.2.16]{Fed69} at~$x \in \R^n$.

Sometimes we write ``$\Gamma_{x.y}$'' to denote the number that appeared under
the name~''$\Gamma$'' in the formulation of~x.y. Throughout the paper $n$ and
$m$ shall denote two integers satisfying $1 \le m < n$.

\section{Approximate tangent cones}

\begin{rem}
    \label{rem:ap-tangent-vector}
    For $a,v \in \R^n$, $\varepsilon \in (0,\infty)$ define the cone
    \begin{displaymath}
        \mathbf{E}(a,v,\varepsilon) 
        = \bigl\{
            b \in \R^n : \exists\,t \in (0,\infty) \ |t (b-a) - v| < \varepsilon
        \bigr\} \,.
    \end{displaymath}
    Then (cf.~\cite[3.2.16]{Fed69}) $v \in \apTan{m}{\mu}{a}$ if and only if
    \begin{displaymath}
        \density^{*m}(\mu \restrict \mathbf{E}(a,v,\varepsilon)) > 0
        \quad \text{for all $\varepsilon \in (0,\infty)$.}
    \end{displaymath}

    Note that, if $0 < \varepsilon < |v|$, then $b \in
    \mathbf{E}(a,v,\varepsilon)$ if and only if
    \begin{displaymath}
        b \ne a 
        \quad \text{and} \quad
        \frac{b-a}{|b-a|} \bullet \frac{v}{|v|} > \left( 1 - \frac{\varepsilon^2}{|v|^2} \right)^{1/2} \,.
    \end{displaymath}
\end{rem}

The notion of an approximate tangent cone $\apTan{m}{\mu}{a}$
(cf.~\cite[3.2.16]{Fed69}) that we use is different from the notion of the
tangent space defined by blow-ups in~\cite[11.2]{Sim83}. However, if we have the
following.

\begin{prop}
    \label{prop:apTan-subset-T}
    Suppose $\mu$ is a Radon measure over~$\R^n$, and $T \in \grass nm$, and $a \in
    \R^n$, 
    \begin{equation}
        \label{eq:apTan-T-assum}
        \text{and}  \quad
        \lim_{r \downarrow 0} r^{-m} \int_{\cball ar} \frac{|\pproj T (b-a)|}{|b-a|} \ud \mu(b) 
        = 0 \,.
    \end{equation}
    Then $\apTan{m}{\mu}{a} \subseteq T$. 
\end{prop}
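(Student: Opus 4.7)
The plan is to argue by contrapositive: assuming $v \in \R^n$ does not lie in $T$, I will produce $\varepsilon \in (0,\infty)$ so that $\density^{*m}(\mu \restrict \mathbf{E}(a,v,\varepsilon)) = 0$; in view of~\ref{rem:ap-tangent-vector} this shows $v \notin \apTan{m}{\mu}{a}$, which is exactly the desired inclusion. The point $v = 0$ lies trivially in~$T$, so I may assume $v \ne 0$, and then the hypothesis $v \notin T$ gives $|\pproj T v| > 0$.

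The first step is a purely geometric one: for $b \in \mathbf{E}(a,v,\varepsilon)$ the unit vector $(b-a)/|b-a|$ is forced to be close to $v/|v|$. Indeed, if $t \in (0,\infty)$ and $u = t(b-a)$ satisfies $|u - v| < \varepsilon < |v|/2$, the standard estimate
\begin{displaymath}
    \Bigl| \frac{u}{|u|} - \frac{v}{|v|} \Bigr|
    \le \frac{|u-v|}{|u|} + \frac{\bigl||u|-|v|\bigr|}{|u|}
    \le \frac{2\varepsilon}{|v| - \varepsilon}
\end{displaymath}
applies; since $\pproj T$ is $1$-Lipschitz this yields
\begin{displaymath}
    \frac{|\pproj T(b-a)|}{|b-a|}
    = \Bigl| \pproj T \Bigl( \frac{b-a}{|b-a|} \Bigr) \Bigr|
    \ge \frac{|\pproj T v|}{|v|} - \frac{2\varepsilon}{|v| - \varepsilon} \,.
\end{displaymath}
Hence, choosing $\varepsilon \in (0,\infty)$ small enough in terms of $|v|$ and $|\pproj T v|$, I obtain some $c = c(v,\pproj T v) \in (0,\infty)$ with $|\pproj T(b-a)| \ge c |b-a|$ whenever $b \in \mathbf{E}(a,v,\varepsilon)$.

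The second step is to compare the two densities. For every $r \in (0,\infty)$ the lower bound just established gives
\begin{displaymath}
    r^{-m} \int_{\cball ar} \frac{|\pproj T(b-a)|}{|b-a|} \ud \mu(b)
    \ge c \cdot r^{-m} \mu\bigl( \cball ar \cap \mathbf{E}(a,v,\varepsilon) \bigr)
    = c \unitmeasure{m} \cdot \frac{\mu\bigl( \cball ar \cap \mathbf{E}(a,v,\varepsilon) \bigr)}{\unitmeasure{m} r^m} \,.
\end{displaymath}
Taking $\limsup$ as $r \downarrow 0$ and invoking the hypothesis~\eqref{eq:apTan-T-assum} on the left-hand side forces the right-hand side to be zero, i.e., $\density^{*m}(\mu \restrict \mathbf{E}(a,v,\varepsilon)) = 0$, as required.

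There is no serious obstacle here; the only mildly technical ingredient is the quantitative reformulation of the cone $\mathbf{E}(a,v,\varepsilon)$ in terms of the direction of $b-a$ (already recorded in~\ref{rem:ap-tangent-vector}), and verifying that the constant $c$ can be extracted independently of~$r$.
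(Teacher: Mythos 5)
Your proof is correct and follows essentially the same route as the paper's: both arguments reduce to showing that for $b$ in the cone $\mathbf{E}(a,v,\varepsilon)$ with $\varepsilon$ small, the ratio $|\pproj T(b-a)|/|b-a|$ is bounded below by a positive constant, and then feed this into the hypothesis to kill $\density^{*m}(\mu \restrict \mathbf{E}(a,v,\varepsilon),a)$. The only cosmetic differences are that you phrase it as a contrapositive rather than a contradiction, and you derive the lower bound via a direct normalization/triangle-inequality estimate instead of the paper's explicit choice $t = (b-a)\bullet v\,|b-a|^{-2}$.
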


\begin{proof}
    If $\apTan{m}{\mu}{a} \without\{0\} = \varnothing$, the conclusion is
    evident. In all other cases we shall prove the proposition by
    contradiction. If there existed $v \in \apTan{m}{\mu}{a} \without T$; then
    $|\pproj Tv| > 0$. Recalling~\ref{rem:ap-tangent-vector}, if $\varepsilon
    \in (0,\infty)$ satisfied $\varepsilon < \tfrac 14|\pproj Tv|$, then for
    each $b \in \mathbf{E}(a,v,\varepsilon)$ setting $t = (b-a) \bullet v
    |b-a|^{-2}$, we would have
    \begin{multline*}
        \left| \frac{b-a}{|b-a|} \bullet \frac{v}{|v|} \right| \cdot
        \left| \pproj T \frac{b-a}{|b-a|} \right|
        = \frac{| \pproj T t(b-a) |}{|v|}
        \ge \frac{ | \pproj T v |  - | \pproj T (v - t(b-a)) | }{|v|}
        \\
        \ge \frac{ | \pproj T v | - \varepsilon }{|v|}
        \ge  \frac 34 \frac{| \pproj T v |}{|v|} > 0 \,.
    \end{multline*}
    Hence, for any $r > 0$ and $\varepsilon \in \bigl(0, \frac 14 |\pproj T
    v|\bigr)$, we would obtain
    \begin{multline}
        \label{eq:apTan-contr-est}
        r^{-m} \int_{\cball ar} \frac{|\pproj T (b-a)|}{|b-a|} \ud \mu(b)
        \ge r^{-m} \int_{\mathbf{E}(a,v,\varepsilon) \cap \cball ar}
        \frac{|\pproj T (b-a)|}{|b-a|} \ud \mu(b)
        \\
        \ge \frac 34
        \frac{| \pproj{T(x)} v |}{|v|}
        \left( 1 - \frac{\varepsilon^2}{|v|^2} \right)^{-1/2} 
        \frac{\mu(\mathbf{E}(a,v,\varepsilon) \cap  \cball ar)}{r^{m}} \,.
    \end{multline}
    Since we assumed $v \in \apTan{m}{\mu}{a}$, we could argue that
    $\density^{*m}(\mu \restrict \mathbf{E}(a,v,\varepsilon),a) > 0$ for all
    $\varepsilon \in (0,\infty)$. Then, for $\varepsilon \in (0, \frac 14
    |\pproj T v|)$, taking $\limsup_{r \downarrow 0}$ on both sides
    of~\eqref{eq:apTan-contr-est}, we would get
    \begin{gather*}
        \limsup_{r \downarrow 0} r^{-m} \int_{\cball ar} 
        \frac{|\pproj T (b-a)|}{|b-a|} \ud \mu(b) > 0 \,,
    \end{gather*}
    which is impossible due to the assumption~\eqref{eq:apTan-T-assum}. Thereby,
    we conclude that it was not possible to choose $v \in \apTan{m}{\mu}{a}
    \without T$; thus $\apTan{m}{\mu}{a} \subseteq T$.
\end{proof}

\begin{rem}
    \label{rem:r-limit}
    Observe that
    \begin{equation}
        \label{eq:weaker-int-limit}
        \lim_{r \downarrow 0} r^{-m-1} \int_{\cball ar} |\pproj T (b-a)| \ud \mu(b) 
        = 0 
    \end{equation}
    implies \eqref{eq:apTan-T-assum} which can be verified by representing the
    integral over $\cball ar$ by a series of integrals over ``annuli'' $\cball
    a{2^{-k}r} \without \oball a{2^{-k-1}r}$ for $k \in \N$. Hence, the
    conclusion of~\ref{prop:apTan-subset-T} holds also with
    assumption~\eqref{eq:apTan-T-assum} replaced by~\eqref{eq:weaker-int-limit}.
\end{rem}

\section{Graphs of functions and the slope of the tangent plane to a graph}
\label{sec:slope}

\begin{rem}
    \label{rem:pqF-setup}
    A~convenient way to work with graphs of functions defined on some $T \in
    \grass nm$ and with values in~$T^\perp$ is to express the function using
    orthonormal bases for~$T$ and~$T^\perp$. To do that we choose orthogonal
    projections $\pp \in \OP(n,m)$ and $\qq \in \OP(n,n-m)$
    (cf.~\cite[1.7.4]{Fed69}) such that $\im \pp^\ast = T$ and $\im \qq^\ast =
    T^\perp$. Then if $A \subseteq \R^m$ and $f : A \to \R^{n-m}$ we define $F =
    \pp^\ast + \qq^\ast \circ f$ and then $\im F$ is the graph of~$f$
    with~$\R^m$ identified with~$T$.
\end{rem}

The following remark, made in the spirit of~\cite[8.9(5)]{All72}, allows to
express the ``slope'' of the tangent plane to a graph by the norm of the
derivative of the function; see~\ref{cor:tilt-for-graphs}.
\begin{rem}
    \label{rem:tilt-vs-norm}
    Assume $T \in \grass nm$ and $\eta \in \Hom(T,T^\perp)$. Set $S = \{ v +
    \eta(v) : v \in T \} \in \grass nm$. Observe that the function $[0,\infty)
    \ni t \mapsto t^2 (1 + t^2)^{-1}$ is increasing; hence,
    using~\cite[8.9(3)]{All72},
    \begin{align*}
        \| \proj S - \proj T \|^2
        = \| \pproj T \circ \proj S \|^2
        &= \sup \left\{ |\pproj T u|^2 |u|^{-2} : u \in S \without\{0\} \right\}
        \\
        &= \sup \left\{ |\eta(w)|^2 |w + \eta(w)|^{-2} : w \in T \without\{0\} \right\}
        \\
        &= \sup \left\{ |\eta(w)|^2 (1 + |\eta(w)|^2)^{-1} : w \in T \,,\, |w| = 1 \right\}
        = \frac{\|\eta\|^2}{1 + \|\eta\|^2} \,.
    \end{align*}
\end{rem}

\begin{cor}
    \label{cor:tilt-for-graphs}
    Let $\pp$, $\qq$, $T$, $A$, $f$, and $F$ be as
    in~\ref{rem:pqF-setup}. Assume that $\Sigma \subseteq \im F$, $a \in \Sigma$
    is such that $\Tan{\Sigma}{a} \in \grass nm$ and that $f$ is differentiable
    at $x = \pp(a)$. Then, employing~\ref{rem:tilt-vs-norm} with $Df(x)$ in
    place of~$\eta$, we obtain
    \begin{gather}
        \label{eq:tilt-Df}
        \| \proj{\Tan{\Sigma}{a}} - \proj T \|^2
        =
        \frac{\|Df(x)\|^2}{ 1 + \|Df(x)\|^2 }
    \end{gather}
    
    Let $b \in \Sigma$ and set $y = \pp(b)$. Then $b = F(y)$ and
    $DF(x)(y-x) \in \Tan{\Sigma}{a}$. Define
    \begin{align*}
        u &= \qq^\ast(f(y) - f(x) - Df(x)(y-x)) = F(x) - F(y) - DF(x)(y-x) \in T^\perp
        \\
        \text{and} \quad
        v &= \pproj{\Tan{\Sigma}{a}} (b-a) = \pproj{\Tan{\Sigma}{a}} u \,.
    \end{align*}
    Then, by~\eqref{eq:tilt-Df}, we get
    \begin{gather*}
        |u-v| = |\proj{\Tan{\Sigma}{a}} u| 
        = |\proj{\Tan{\Sigma}{a}} \pproj T  u|
        \le \|\proj{\Tan{\Sigma}{a}} - \proj T\| |u|
        = \frac{\|Df(x)\| \, |u|}{(1+\|Df(x)\|^2)^{1/2}}  \,.
    \end{gather*}
    In consequence,
    \begin{gather}
        \label{eq:Taylor-dist}
        \begin{aligned}
            |\pproj{\Tan{\Sigma}{a}}(b-a)|
            &\le |f(y) - f(x) - Df(x)(y-x)|  \,,
            \\
            |\pproj{\Tan{\Sigma}{a}}(b-a)|
            &\ge \left(1 - \frac{\|Df(x)\|}{(1 + \|Df(x)\|^2)^{1/2}} \right) |f(y) - f(x) - Df(x)(y-x)| \,.
        \end{aligned}
    \end{gather}
\end{cor}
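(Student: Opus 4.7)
The plan is to verify \eqref{eq:tilt-Df} by identifying $\Tan{\Sigma}{a}$ with the tangent plane to the graph and invoking Remark~\ref{rem:tilt-vs-norm}, then deduce \eqref{eq:Taylor-dist} from a direct comparison of $b-a$ with its first-order approximation $DF(x)(y-x)$, which lies in $\Tan{\Sigma}{a}$.

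For \eqref{eq:tilt-Df}, I would first identify $\Tan{\Sigma}{a}$. Differentiability of $f$ at $x$ endows $\im F$ with the tangent $m$-plane
\[
    S = \bigl\{\pp^\ast(w) + \qq^\ast(Df(x)(w)) : w \in \R^m\bigr\}
\]
at $a = F(x)$; via the identifications $\pp^\ast : \R^m \to T$ and $\qq^\ast : \R^{n-m} \to T^\perp$ this is exactly the plane $\{v + \eta(v) : v \in T\}$ of Remark~\ref{rem:tilt-vs-norm} with $\eta = \qq^\ast \circ Df(x) \circ \pp$, so $\|\eta\| = \|Df(x)\|$. Since $\Sigma \subseteq \im F$, every element of $\Tan{\Sigma}{a}$ is a limit of secants $t_k(b_k - a)$ with $b_k = F(y_k)$, and differentiability of $f$ at $x$ forces the limit to lie in $S$. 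As $\Tan{\Sigma}{a} \in \grass nm$ by hypothesis, a dimension count forces $\Tan{\Sigma}{a} = S$, and Remark~\ref{rem:tilt-vs-norm} yields \eqref{eq:tilt-Df}.

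For \eqref{eq:Taylor-dist}, I would write $a = \pp^\ast(x) + \qq^\ast(f(x))$ and $b = \pp^\ast(y) + \qq^\ast(f(y))$, and set
\[
    u = b - a - DF(x)(y-x) = \qq^\ast\bigl(f(y) - f(x) - Df(x)(y-x)\bigr) \in T^\perp,
\]
so that $|u| = |f(y) - f(x) - Df(x)(y-x)|$. Since $DF(x)(y-x) \in \Tan{\Sigma}{a}$, this gives $v := \pproj{\Tan{\Sigma}{a}}(b-a) = \pproj{\Tan{\Sigma}{a}}(u)$, and hence $u - v = \proj{\Tan{\Sigma}{a}}(u)$. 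Because $u \in T^\perp$ implies $\proj T(u) = 0$, this coincides with $(\proj{\Tan{\Sigma}{a}} - \proj T)(u)$, and the operator-norm bound combined with \eqref{eq:tilt-Df} yields
\[
    |u - v| \le \frac{\|Df(x)\|}{(1+\|Df(x)\|^2)^{1/2}} |u|.
\]
The upper inequality in \eqref{eq:Taylor-dist} then follows from $|v| \le |u|$ (orthogonal projections are nonexpansive), and the lower one from the reverse triangle inequality $|v| \ge |u| - |u-v|$.

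The only point needing care is the identification $\Tan{\Sigma}{a} = S$: since $\Sigma$ is merely a subset of $\im F$, its tangent cone could a priori be a proper subset of $S$, but the hypothesis $\Tan{\Sigma}{a} \in \grass nm$ together with $\Tan{\Sigma}{a} \subseteq S$ resolves this by dimension. All remaining steps reduce to routine linear algebra of orthogonal projections applied in the framework already set up in Remark~\ref{rem:tilt-vs-norm}.
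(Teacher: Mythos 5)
Your proposal takes essentially the same route as the paper's own derivation, which is embedded in the statement of the corollary: set $u = b-a-DF(x)(y-x) \in T^\perp$, note that $DF(x)(y-x) \in \Tan{\Sigma}{a}$ so $v = \pproj{\Tan{\Sigma}{a}}(b-a) = \pproj{\Tan{\Sigma}{a}} u$, write $u-v = \proj{\Tan{\Sigma}{a}} u = (\proj{\Tan{\Sigma}{a}} - \proj T) u$ using $u \in T^\perp$, and then apply \eqref{eq:tilt-Df} together with the nonexpansiveness of $\pproj{\Tan{\Sigma}{a}}$ and the reverse triangle inequality. The one place you add value is in spelling out why $\Tan{\Sigma}{a}$ coincides with the graph $S$ of $Df(x)$ — the paper simply invokes Remark~\ref{rem:tilt-vs-norm} without comment — and your secant-line argument for $\Tan{\Sigma}{a} \subseteq S$ together with the dimension count from $\Tan{\Sigma}{a} \in \grass nm$ is exactly the right justification for that implicit step.
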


\section{Higher order rectifiability criterion for graphs}
\label{sec:criterion}

To talk about approximate features of functions (e.g. limits, continuity,
differentiability; cf.~\cite[2.9.2\,,\,3.1.2\,,\,3.2.16]{Fed69}) one needs to
provide two parameters: a~measure and a~Vitali relation
(cf.~\cite[2.8.16]{Fed69}). It will be convenient to define a~standard family of
Vitali relations.
\begin{defin}
    \label{def:vitali-rel}
    For $k \in \N \without\{0\}$, we set
    \begin{gather*}
        \mathcal V_k = \bigl\{ (x,\cball xr) : x \in \R^k \,,\, r \in (0,\infty) \bigr\}  \,.
    \end{gather*}
\end{defin}
\begin{rem}
    \label{rem:vitali-rel}
    If $k \in \N \without\{0\}$ and $\phi$ is a measure over $\R^k$ such that
    all open sets are $\phi$~measurable and $\phi(A) < \infty$ for all bounded
    sets $A \subseteq \R^k$, then due to~\cite[2.8.18]{Fed69} the family
    $\mathcal V_k$ is a~$\phi$~Vitali relation.
\end{rem}

In the following proposition, whenever we write ``$\ap Df$'' we mean the
approximate differential with respect to~$(\LM^m, \mathcal V_m)$.
\begin{prop}
    \label{prop:schatzle}
    Suppose $\alpha \in (0,1]$, and $A \subseteq \R^m$ is $\LM^m$-measurable and
    such that $\density^m(\LM^m \restrict A, a) = 1$ for all $a \in A$. Let $f :
    A \to \R^{n-m}$ be $(\LM^m, \mathcal V_m)$ approximately differentiable
    on~$A$ and satisfy one of the following conditions
    \begin{gather}
        \label{eq:sch-1}
        \ \ \limsup_{r \downarrow 0} r^{-m} \int_{A \cap \cball y{r}}
        \frac{|f(z) - f(y) - \ap Df(y)(z-y)|}{|z-y|^{1+\alpha}}\ud \LM^m(z) < \infty 
        \quad \text{for all $y \in A$} \,,
    \end{gather}
    or
    \begin{gather}
        \label{eq:sch-2}
        (\LM^m, \mathcal V_m) \aplimsup_{z \to y}
        \frac{|f(z) - f(y) - \ap Df(y)(z-y)|}{|z-y|^{1+\alpha}} < \infty 
        \quad \text{for all $y \in A$} \,.
    \end{gather}
    Then there exist functions $f_k \in \cnt^{1,\alpha}(\R^m,\R^{n-m})$, such
    that
    \begin{gather}
        \label{eq:sch-rect}
        \LM^m\left(
            A \without \tcup_{k=1}^{\infty}
            \bigl\{
              x \in A : f(x) = f_k(x) \text{ and } \ap Df(x) = Df_k(x) 
            \bigr\}
        \right) = 0.
    \end{gather}
    In particular, the graph of~$f$ is countably $(\HM^m,m)$~rectifiable of
    class~$\cnt^{1,\alpha}$.
\end{prop}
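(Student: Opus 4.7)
The plan is to reduce to the integral condition \eqref{eq:sch-1}, then apply a standard Lusin/Whitney-type decomposition to invoke Schätzle's Lemma A.1 from \cite{Sch09} on countably many pieces. The final conclusion on the graph follows because a $\cnt^{1,\alpha}$ graph is itself an $m$-dimensional $\cnt^{1,\alpha}$ submanifold of~$\R^n$.

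First I would show that \eqref{eq:sch-2} implies \eqref{eq:sch-1} after discarding an $\LM^m$-null subset of~$A$. Indeed, for each $j \in \N \without \{0\}$ let
\begin{displaymath}
    B_j = \bigl\{ y \in A : (\LM^m,\mathcal V_m)\aplimsup_{z \to y} |z-y|^{-1-\alpha}|f(z) - f(y) - \ap Df(y)(z-y)| \le j \bigr\} \,,
\end{displaymath}
so that $A = \bigcup_j B_j$. By~\cite[2.9.13]{Fed69} we have $\density^m(\LM^m \restrict B_j, y) = 1$ for $\LM^m$ almost all $y \in B_j$, and at such points the pointwise aplimsup bound gives, for sufficiently small $r$, that the integrand in \eqref{eq:sch-1} is bounded by $2j$ on $B_j \cap \cball{y}{r}$ outside a set of density $o(r^m)$; the integral over the exceptional set is dominated using the aplimsup bound of~$j$ and the fact that the exceptional set has density zero at~$y$. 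This yields \eqref{eq:sch-1} on $B_j$ with the original function~$f$ replaced by its restriction to $B_j$, and it suffices to treat each~$B_j$ separately.

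Assuming \eqref{eq:sch-1}, I would decompose $A$ into a countable union $A = \bigcup_{k=1}^{\infty} A_k$ of bounded $\LM^m$-measurable subsets on each of which
\begin{gather*}
    \sup_{y \in A_k} \ \sup_{0 < r < r_k} r^{-m} \int_{A \cap \cball y{r}} \frac{|f(z) - f(y) - \ap Df(y)(z-y)|}{|z-y|^{1+\alpha}} \ud \LM^m(z) \le M_k \,,
    \\
    \sup_{y \in A_k} \|\ap Df(y)\| \le M_k \,,
\end{gather*}
for appropriate constants $r_k, M_k \in (0,\infty)$, and on each of which the functions $f$ and~$\ap Df$ are continuous; this is possible by Egoroff's theorem together with Lusin's theorem. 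By restricting farther to the $\LM^m$-density one points of~$A_k$, we may moreover assume $\density^m(\LM^m \restrict A_k, y) = 1$ for every $y \in A_k$.

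On each such~$A_k$ I would then invoke \cite[Lemma A.1]{Sch09}: the uniform Campanato-type integral bound, together with the uniform bound on~$\|\ap Df\|$, the continuity of $f$ and $\ap Df$, and the density one condition, are precisely the hypotheses that yield a function $f_k \in \cnt^{1,\alpha}(\R^m, \R^{n-m})$ with $f_k|_{A_k} = f|_{A_k}$ and $Df_k|_{A_k} = \ap Df|_{A_k}$. Forming the countable family $\{f_k\}$ and using that $\LM^m(A \without \bigcup_k A_k) = 0$ yields~\eqref{eq:sch-rect}. The final rectifiability statement for the graph is immediate: the graph of each~$f_k$ is an $m$-dimensional submanifold of class~$\cnt^{1,\alpha}$, and the graph of~$f$ over $A \without \bigcup_k \{f = f_k\}$ is $\HM^m$-null because $f$ is Lipschitz on each~$A_k$ (since $\ap Df$ is bounded there and condition~\eqref{eq:sch-1} with $\alpha > 0$ gives the missing ``lower order'' Lipschitz control via integration).

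The main obstacle is the application of \cite[Lemma A.1]{Sch09}: all the care in the decomposition is spent ensuring that its hypotheses are literally satisfied, in particular obtaining a \emph{uniform} integral decay rate on~$A_k$ from the pointwise hypothesis~\eqref{eq:sch-1}. Once that is in place the extension is supplied by Schätzle's lemma, which itself rests on a Whitney-type construction using the approximate derivative as the candidate first-order datum.
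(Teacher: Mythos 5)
The paper's proof of this proposition is deliberately laconic: it cites Sch\"atzle's Lemma A.1 verbatim for the case $\alpha = 1$ and observes that replacing the exponent $2$ by $1+\alpha$ in Sch\"atzle's argument handles $0 < \alpha < 1$. The case \eqref{eq:sch-2} is then addressed in Remark~\ref{rem:sch-ll}: one either cites the Lin--Liu theorem, or one inspects Sch\"atzle's proof and extracts a weaker \emph{common} sufficient condition (a bound on the measure of an exceptional set in $\cball{y}{r}$) that is implied both by \eqref{eq:sch-1} and by \eqref{eq:sch-2}. Crucially, the paper points out that \eqref{eq:sch-1} and \eqref{eq:sch-2} are \emph{not} pointwise comparable; the two routes into Sch\"atzle's machinery must be kept separate.

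Your decomposition strategy for the \eqref{eq:sch-1} case -- split $A$ into pieces $A_k$ with uniform integral bounds, uniform Lipschitz control on $\ap Df$, and $\LM^m$-density one, then apply Sch\"atzle's lemma piece by piece -- is sound and essentially re-derives the Lusin-type reduction that is already built into Sch\"atzle's proof, so there is no issue there (aside from doing more work than the paper does).

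The genuine gap is the opening reduction, namely your claim that \eqref{eq:sch-2} implies \eqref{eq:sch-1} after passing to a subset $B_j$. You argue that on $B_j \cap \cball{y}{r}$ the integrand is bounded by $2j$ outside a set of $\LM^m$-measure $o(r^m)$, and that ``the integral over the exceptional set is dominated using the aplimsup bound of $j$.'' This does not follow: the approximate $\limsup$ bound gives no control whatsoever on the \emph{size} of the integrand on the exceptional set, only on the \emph{measure} of that set. Even after you further restrict to a set where $f$ is Lipschitz (so the integrand is $\le C|z-y|^{-\alpha}$), the contribution of a density-zero exceptional set to $r^{-m}\int_{\cball{y}{r}}|z-y|^{-\alpha}\ud\LM^m(z)$ is only $o(r^{-\alpha})$, which still diverges as $r \downarrow 0$. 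So no variant of this estimate yields \eqref{eq:sch-1} at $y$, and in fact there is no pointwise implication between the two conditions; the paper's Remark~\ref{rem:sch-ll} is explicit on this. To handle \eqref{eq:sch-2} you should either cite \cite[Theorem~1.5]{LL13} directly, or verify that \eqref{eq:sch-2} already gives, at each $y$, the measure-of-exceptional-set condition
\begin{gather*}
    \limsup_{r \downarrow 0}\frac{\LM^m\bigl(\cball{y}{r} \without \{z \in A : |f(z)-f(y)-\ap Df(y)(z-y)| < K r^{1+\alpha}\}\bigr)}{\unitmeasure{m} r^m} < \varepsilon_0(m)
\end{gather*}
for some $K$, and then feed this directly into Sch\"atzle's construction, rather than detouring through \eqref{eq:sch-1}.
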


\begin{proof}
    The proof can be found in~\cite[Lemma A.1]{Sch09} for the case $\alpha = 1$.
    If $0 < \alpha < 1$, exactly the same proof, with relevant occurrences
    of~$2$ replaced by $1 + \alpha$, establishes the assertion. 
\end{proof}

\begin{rem}
    \label{rem:sch-ll}
    In case condition~\eqref{eq:sch-2} is satisfied in~\ref{prop:schatzle},
    the~conclusion of~\ref{prop:schatzle} follows also from a theorem of Lin and
    Liu~\cite[Theorem~1.5]{LL13}. However, one should note that at a~single
    point $y \in A$ condition~\eqref{eq:sch-1} does \emph{not}
    imply~\eqref{eq:sch-2} at~$y$. It is rather a consequence of the proposition
    that condition~\eqref{eq:sch-1} at all points of~$A$ implies
    condition~\eqref{eq:sch-2} for $\LM^m$ almost all~$y \in A$. Inspecting the
    proof of~\cite[Lemma A.1]{Sch09} one can extract a sufficient condition,
    which is implied by~\eqref{eq:sch-1} as well as by~\eqref{eq:sch-2}, for the
    proposition to hold; namely, it is enough to assume that for all $y \in A$
    there exists some $K \in (0,\infty)$ such that
    \begin{gather*}
        \limsup_{r \downarrow 0} 
        \frac{\LM^m\bigl(\cball yr \without \bigl\{ z \in A :
          |f(z) - f(y) - \ap Df(y)(z-y)| < K r^{1 + \alpha} \bigr\}\bigr)}
        {\unitmeasure{m} r^m}
        < \varepsilon_0(m) \,,
    \end{gather*}
    where $\varepsilon_0(m) \in (0,1)$ is a small constant depending only on~$m$.
\end{rem}

\begin{cor}
    \label{cor:main-rect-cond}
    Let $\pp$, $\qq$, $T$, $A$, $f$, $F$ be as in~\ref{rem:pqF-setup}. Suppose
    $\alpha \in (0,1]$, and $A$ is $\LM^m$~measurable, and $f$ is $(\LM^m,
    \mathcal V_m)$~approximately differentiable on~$A$, and $\Sigma = F \lIm A
    \rIm$ satisfies $\HM^m(\Sigma) < \infty$. Assume that one of the following
    conditions is satisfied for $\HM^m$~almost all $a \in \Sigma$
    \begin{gather}
        \label{eq:limsup-int-cond}
        \limsup_{r \downarrow 0} r^{-m} \int_{\Sigma \cap \cball ar} 
        \frac{|\pproj{\apTan{m}{\HM^m \restrict \Sigma}a}(b-a)|}{|b-a|^{1+\alpha}} \ud \HM^m(b)  < \infty
    \end{gather}
    or
    \begin{gather}
        \label{eq:aplimsup-cond}
        (\HM^m \restrict \Sigma,\mathcal V_n) \aplimsup_{b \to a}
        \frac{|\pproj{\apTan{m}{\HM^m \restrict \Sigma}a}(b-a)|}{|b-a|^{1+\alpha}} < \infty \,.
    \end{gather}
    Then $\Sigma$ is $\HM^m$~measurable and countably $(\HM^m,m)$~rectifiable of
    class~$\cnt^{1,\alpha}$.
\end{cor}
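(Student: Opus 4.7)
The plan is to invoke Proposition~\ref{prop:schatzle} for the function $f$ on $A$, thereby concluding that the graph of $f$ (which equals $\Sigma$) is countably $(\HM^m,m)$-rectifiable of class~$\cnt^{1,\alpha}$. The main task is to translate the hypothesis \eqref{eq:limsup-int-cond} or \eqref{eq:aplimsup-cond}, formulated on $\Sigma$ with respect to $\HM^m \restrict \Sigma$, into the corresponding hypothesis \eqref{eq:sch-1} or \eqref{eq:sch-2} on $A$ with respect to $\LM^m$.

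First I would perform a standard Lusin-type decomposition: since $f$ is $(\LM^m,\mathcal V_m)$-approximately differentiable on all of $A$, there is an $\LM^m$-null set $N \subseteq A$ and Borel subsets $A_k \subseteq A \without N$, $k \in \N$, covering $A \without N$, such that $f|_{A_k}$ is the restriction of a Lipschitz function $g_k : \R^m \to \R^{n-m}$ with $\lip g_k \le k$ (cf.\ Federer~3.1.16). Because $\pp \circ F = \id$, one has the bi-Lipschitz bound $|y-x| \le |F(y) - F(x)| \le (1+k^2)^{1/2} |y-x|$ on each $A_k$, so each $\Sigma_k = F\lIm A_k \rIm$ is $\HM^m$-measurable and $(\HM^m,m)$-rectifiable, with the area formula for $F|_{A_k}$ having Jacobian in $[1, (1+k^2)^{m/2}]$. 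At $\HM^m$-a.e.\ $a = F(x) \in \Sigma_k$ the approximate tangent $\apTan{m}{\HM^m \restrict \Sigma}{a}$ coincides with the classical tangent $DF(x)\lIm\R^m\rIm$ to $\Sigma_k$, $\ap Df(x) = Dg_k(x)$, and $\density^m(\LM^m \restrict A, x) = 1$. The $\HM^m$-measurability of $\Sigma$ itself follows a posteriori from the decomposition $\bigcup_k \Sigma_k$ together with $\HM^m(\Sigma) < \infty$, because \eqref{eq:limsup-int-cond}/\eqref{eq:aplimsup-cond} force the existence of an $m$-dimensional approximate tangent plane $\HM^m$-a.e., forcing the residual set $F\lIm N\rIm \without \bigcup_k \Sigma_k$ to be $\HM^m$-negligible.

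Fix now a typical point $a = F(x) \in \Sigma_k$ and write $T_a = \apTan{m}{\HM^m \restrict \Sigma}{a}$. The key pointwise input is the lower bound in \eqref{eq:Taylor-dist} of Corollary~\ref{cor:tilt-for-graphs}, giving for every $b = F(y) \in \Sigma$
\begin{gather*}
    |\pproj{T_a}(b-a)| \;\ge\; c(x)\, |f(y) - f(x) - \ap Df(x)(y-x)| \,, \quad c(x) = 1 - \tfrac{\|\ap Df(x)\|}{(1+\|\ap Df(x)\|^2)^{1/2}} > 0 \,.
\end{gather*}
Combining this with $|y-x| \le |b-a| \le (1+k^2)^{1/2}|y-x|$ on $A_k$ and with the area formula on $\Sigma_k$, one obtains the one-sided comparison
\begin{gather*}
    \int_{\Sigma \cap \cball ar} \frac{|\pproj{T_a}(b-a)|}{|b-a|^{1+\alpha}} \ud \HM^m(b) \;\ge\; c(x) \int_{A_k \cap \cball x{r(1+k^2)^{-1/2}}} \frac{|f(y)-f(x)-\ap Df(x)(y-x)|}{|y-x|^{1+\alpha}} \ud \LM^m(y)
\end{gather*}
and an analogous comparison of approximate limsups (the bi-Lipschitz map $F|_{A_k}$ preserving the Vitali relation $\mathcal V_m$ up to bounded distortion). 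Hence \eqref{eq:limsup-int-cond} implies \eqref{eq:sch-1} at $y = x$, and \eqref{eq:aplimsup-cond} implies \eqref{eq:sch-2} at $y = x$.

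Applying Proposition~\ref{prop:schatzle} to $f$ on the reduced domain $\bigcup_k A_k$ (intersected with the set of its $\LM^m$-density-one points) then yields the countable $(\HM^m,m)$-rectifiability of class~$\cnt^{1,\alpha}$ of the graph of $f$, i.e.\ of $\Sigma$ modulo an $\HM^m$-null set. The principal technical obstacle is the bookkeeping between the piecewise-Lipschitz decomposition and the integral formulated on all of $\Sigma$: for a fixed base point $a \in A_k$ the integral over $\Sigma \cap \cball ar$ collects contributions from all pieces $\Sigma_j$, of wildly varying tilt and Jacobian, yet the implication above only needs a lower bound for the single piece $\Sigma_k$ containing $a$, which is furnished a fortiori by positivity of the integrand.
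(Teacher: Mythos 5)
Your proof follows the paper's path almost step for step: decompose $A$ into pieces on which $f$ is Lipschitz, transfer the hypothesis formulated on $\Sigma$ down to the parameter domain via the lower bound in \eqref{eq:Taylor-dist} together with the area formula and the bi-Lipschitz control $|y-x| \le |F(y)-F(x)| \le (1+\lip(f)^2)^{1/2}|y-x|$, and then invoke Proposition~\ref{prop:schatzle}. The locality observation that $\apTan{m}{\HM^m \restrict \Sigma}{a} = DF(x)\lIm\R^m\rIm$ for $\HM^m$-a.e.\ $a = F(x)$ in a Lipschitz piece, which is needed so that \eqref{eq:Taylor-dist} actually applies, is used in the same way as in the paper.

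There is, however, a genuine gap in the measurability part. You cite Federer~3.1.16, a Lusin-type approximation which only covers $A$ up to an $\LM^m$-null residual $N$. Since $F = \pp^\ast + \qq^\ast\circ f$ need not be Lipschitz, $F\lIm N\rIm$ can a priori carry positive $\HM^m$-measure; and since $\pp\circ F = \id$ makes $F$ injective while $N$ is disjoint from $\bigcup_k A_k$, the set $F\lIm N\rIm$ is actually \emph{disjoint} from $\bigcup_k\Sigma_k$ and is not absorbed by the good pieces. Your a posteriori claim that \eqref{eq:limsup-int-cond}/\eqref{eq:aplimsup-cond} force $\HM^m(F\lIm N\rIm)=0$ is unjustified: the tangent-plane criterion for rectifiability already presupposes $\HM^m$-measurability of the set --- precisely what you are trying to prove --- and, even granting measurability, the existence of an approximate $m$-tangent plane $\HM^m$-a.e.\ on $\Sigma$ is compatible with $F\lIm N\rIm$ having positive measure. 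The paper sidesteps this issue by invoking Federer~3.1.8 instead: since $f$ is assumed approximately differentiable at \emph{every} point of $A$, that theorem produces an \emph{exact} decomposition $A = \bigcup_{i\in\N} A_i$ into $\LM^m$-measurable sets with each $f|_{A_i}$ Lipschitz, so $\Sigma = \bigcup_i F\lIm A_i\rIm$ is $\HM^m$-measurable as a countable union of bi-Lipschitz images of measurable sets, with no residual set to control. With that substitution the rest of your argument (including the somewhat informal approximate limsup comparison, which the paper makes precise via the distribution-function and doubling estimates \eqref{eq:inf-cmp}--\eqref{eq:psi-doubling}) goes through.
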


\begin{proof}
    Employing~\cite[3.1.8]{Fed69} we can divide $A$ into a countable family of
    $\LM^m$~measurable sets $\{ A_i : i \in \N\}$ such that $f$ restricted to
    each of $A_i$ is Lipschitz and $\bigcup_{i \in \N} A_i = A$. Then $F|_{A_i}$
    is bilipschitz and, since~$\HM^m$ and~$\LM^m$ are Borel regular, $\Sigma_i =
    F \lIm A_i \rIm$ is $\HM^m$~measurable for each $i \in \N$. Hence, $\Sigma =
    \bigcup_{i \in \N} \Sigma_i$ is also $\HM^m$~measurable. Moreover, if one of
    the conditions~\eqref{eq:limsup-int-cond} or~\eqref{eq:aplimsup-cond} is
    satisfied for $\HM^m$~almost all $a \in \Sigma$, then the same condition
    holds for $\HM^m$~almost all $a \in \Sigma_i$ for each $i \in \N$. Hence, it
    suffices to prove the Corollary separately for each $A_i$ and~$\Sigma_i$ in
    place of~$A$ and~$\Sigma$. In the sequel we will assume this replacement has
    been done and that $f$ has been extended to the whole of~$\R^m$ by means of
    the Kirszbraun's theorem~\cite[2.10.43]{Fed69}, so that we have
    \begin{gather*}
        f : \R^m \to \R^{n-m}
        \quad \text{satisfies} \quad
        L = \lip(f) < \infty 
        \\
        \text{and} \quad
        \apTan{m}{\HM^m \restrict \Sigma}a = \Tan{\Sigma}a
        \quad \text{for $\HM^m$ almost all $a \in \Sigma$.}
    \end{gather*}

    If~\eqref{eq:limsup-int-cond} holds for $\HM^m$~almost all $a \in \Sigma$,
    then let $\Sigma' \subseteq \Sigma$ be the set of $a \in \Sigma$ for
    which~\eqref{eq:limsup-int-cond} holds. If~\eqref{eq:aplimsup-cond} holds
    for $\HM^m$~almost all $a \in \Sigma$, let $\Sigma' \subseteq \Sigma$ be the
    set of $a \in \Sigma$ for which~\eqref{eq:aplimsup-cond} holds.
    Since $\HM^m(\Sigma \without \Sigma') = 0$, we know $\Sigma'$ is
    $\HM^m$~measurable. Recall the definitions of $\pp$, $\qq$, and $F$
    from~\ref{rem:pqF-setup}. Set $B' = \pp \lIm \Sigma' \rIm$ and note that
    $B' = F^{-1} \lIm \Sigma' \rIm$ so it is $\LM^m$~measurable. Next, set
    $\tilde B = \{ x \in B' : Df(x) \text{ exists} \}$. Then $\LM^m(B' \without
    \tilde B) = 0$ due to the Rademacher's theorem (cf.~\cite[3.1.6]{Fed69});
    hence, $\tilde B$ is also $\LM^m$~measurable. Define $B = \{ x \in \tilde B
    : \density^m(\LM^m \restrict \tilde B, x) = 1\}$. Then,
    by~\cite[2.9.11]{Fed69}, $B$~is $\LM^m$~measurable, $\LM^m(\tilde B \without
    B) = 0$ and $\density^m(\LM^m \restrict B,x) = 1$ for \emph{all} $x \in
    B$. Observe
    \begin{equation}
        \label{eq:Sigma-FB}
        \HM^m(\Sigma \without F \lIm B \rIm)
        = \HM^m(\Sigma \without \Sigma') + \HM^m(F \lIm B' \without B \rIm) = 0 
        \quad \text{because $F$ is Lipschitz;}
    \end{equation}
    hence, it suffices to check that~\ref{prop:schatzle} applies to~$f|_B$.

    Set $\lambda = (1+L^2)^{-1/2} \in (0,1]$ and note that $\lip(F) \le
    \lambda^{-1}$; hence,
    \begin{equation}
        \label{eq:FBlr-Br}
        F \lIm \cball{\pp(a)}{\lambda r} \cap B \rIm 
        \subseteq \cball ar \cap F \lIm B \rIm
        \quad \text{for each $a \in \Sigma$ and $r \in (0,\infty)$;}
    \end{equation}
    Employing~\eqref{eq:Sigma-FB} combined with~\eqref{eq:Taylor-dist} and then
    applying the area formula~\cite[3.2.3]{Fed69} together
    with~\eqref{eq:FBlr-Br}, we obtain
    \begin{multline*}
        r^{-m} \int_{\cball ar \cap \Sigma}
        \frac{|\pproj{\Tan{\Sigma}a}(b-a)|}{|b-a|^{1+\alpha}} \ud \HM^m(b)
        \\
        \ge 
        \lambda^{1+\alpha}(1 - \lambda L) r^{-m} \int_{\cball ar \cap F \lIm B \rIm}
        \frac{|f(\pp(b)) - f(\pp(a)) - Df(\pp(a))(\pp(b)
          - \pp(a))|}{|\pp(b) - \pp(a)|^{1+\alpha}} \ud \HM^m(b)
        \\
        \ge 
        \lambda^{1+\alpha+m}(1 - \lambda L) (\lambda r)^{-m} \int_{\cball{x}{\lambda r} \cap B}
        \frac{|f(y) - f(x) - Df(x)(y - x)|}{|y - x|^{1+\alpha}} \ud \LM^m(y)
    \end{multline*}
    for $r \in (0,\infty)$, $x \in B$, and $a = F(x)$. Hence,
    if~\eqref{eq:limsup-int-cond} holds, then one can employ~\ref{prop:schatzle}
    to see that $F \lIm B \rIm$ is countably $(\HM^m,m)$~rectifiable of
    class~$\cnt^{1,\alpha}$ and, due to~\eqref{eq:Sigma-FB}, so is~$\Sigma$.

    Fix $a \in F \lIm B \rIm$ and set $x = \pp(a)$. For $y \in B$ and $b \in F
    \lIm B \rIm$ define
    \begin{gather*}
        g(b) = \frac{|\pproj{\Tan{\Sigma}a}(b-a)|}{|b-a|^{1+\alpha}}  \,,
        \quad
        h(y) = \frac{|f(y) - f(x) - Df(x)(y - x)|}{|y - x|^{1+\alpha}} \,,
        \\
        \text{and} \quad
        \phi = \HM^m \restrict F \lIm B \rIm = \HM^m \restrict \Sigma \,,
    \end{gather*}
    Setting $\Delta = \lambda^{1+\alpha}(1 - \lambda L)$ we obtain,
    by~\eqref{eq:Taylor-dist} and the area formula~\cite[3.2.3]{Fed69},
    \begin{gather*}
        \Delta h(\pp(b)) \le g(b) 
        \quad \text{and} \quad
        \LM^m(S) \le \phi(F \lIm S \rIm) \le \lambda^{-m} \LM^m(S)
    \end{gather*}
    whenever $b \in F \lIm B \rIm$ and $S \subseteq B$ is
    $\LM^m$~measurable. Hence, for each $r,t \in (0,\infty)$
    \begin{gather*}
        \{ y \in B : h(y) > t \} \subseteq \pp \bigl\lIm \{ b \in F \lIm B \rIm : g(b) > \Delta t \} \bigr\rIm \,,
        \\
        \frac{\LM^m(\cball x{\lambda r} \cap \{ y \in B : h(y) > t \})}{\LM^m(\cball xr \cap B)}
        \le \frac{\phi(\cball ar \cap \{ b \in F \lIm B \rIm : g(b) > \Delta t \})}{\lambda^m \phi(\cball ar)} \,.
    \end{gather*}
    Therefore,
    \begin{multline}
        \label{eq:inf-cmp}
        \inf \left\{ t \in \R : \lim_{r \downarrow 0}
            \frac{\LM^m(\cball x{\lambda r} \cap \{ y \in B : h(y) > t \})}{\LM^m(\cball xr \cap B)} = 0
        \right\}
        \\
        \le 
        \inf \left\{ t \in \R : \lim_{r \downarrow 0}
            \frac{\phi(\cball ar \cap \{ b \in F \lIm B \rIm : g(b) > \Delta t \})}{\lambda^m \phi(\cball ar)} = 0
        \right\} \,.
    \end{multline}
    For any $x \in B$ we have $\density^m(\LM^m \restrict B,x) = 1$ so it follows that
    \begin{gather}
        \label{eq:psi-doubling}
        \lim_{r \downarrow 0} \frac{\LM^m(\cball x{\lambda r})}{\LM^m(\cball xr \cap B)} = \lambda^{m} < \infty \,.
    \end{gather}
    Recalling $x = \pp(a) \in B$ was chosen arbitrarily and
    combining~\eqref{eq:inf-cmp} with~\eqref{eq:psi-doubling} yields
    \begin{gather*}
        (\LM^m, \mathcal V_m)\aplimsup_{y \to x} h(y) \le (\phi, \mathcal V_n)\aplimsup_{b \to a} g(b) 
    \end{gather*}
    for all $x \in B$ and $a = F(x)$. Consequently, if~\eqref{eq:aplimsup-cond}
    holds, then one can employ~\ref{prop:schatzle} to see that $F \lIm B \rIm$
    is countably $(\HM^m,m)$~rectifiable of class~$\cnt^{1,\alpha}$ and, because
    of~\eqref{eq:Sigma-FB}, so is~$\Sigma$.
\end{proof}

\section{Existence of balanced balls}
\label{sec:balanced}

\begin{defin}
    \label{def:X-delta}
    For $\delta \in [0,1]$, and $a \in \R^n$, and $r \in (0,\infty]$ we set
    \begin{displaymath}
        X_{\delta}(a,r) = \bigl\{
        (b_1,\ldots,b_m) \in {\cball ar}^m : |(b_1-a) \wedge \cdots \wedge (b_m-a)| \ge \delta r^m
        \bigr\} \,.
    \end{displaymath}
\end{defin}

The following lemma~\ref{lem:balanced-balls} is a variant
of~\cite[Lemma~3.1]{AT15}. Similar results can also be found
in~\cite[Proposition~3.1]{LW09}, and \cite[Lemma~5.8]{DS91},
and~\cite[Lemma~4.2]{Meu15a}.

\begin{lem}
    \label{lem:balanced-balls}
    Suppose
    \begin{gather*}
        \text{$\mu$ is a Radon measure over~$\R^n$}  \,,
        \quad
        a,b \in \R^n \,,
        \quad
        r \in (0,\infty) \,,
        \quad
        \mu(\cball ar) > 0 \,,
        \\
        t,\gamma \in (0,1) \,,
        \quad
        k \in \N \,,
        \quad
        k < m \,,
        \quad
        L_k \in \grass nk \,.
    \end{gather*}
    Then one of the following alternatives holds:
    \begin{enumerate}
    \item
        \label{i:bb:balanced}
        There exist $x_{k+1}, \ldots, x_{m} \in \cball ar$ such that if $L_j =
        L_k + \lin\{ x_{k+1}-b, \ldots, x_j-b \}$ for $j = k+1, \ldots, m$, then
        \begin{gather*}
            \dist(x_j-b,L_{j-1}) > \gamma r 
            \quad \text{for $j = k+1,\ldots,m$} \,,
            \\
            \mu(\cball{x_j}{tr} \cap \cball ar) \ge \Gamma^{-1} t^n \mu(\cball ar) 
            \quad \text{for $j = k+1,\ldots,m$} \,,
        \end{gather*}
        where $\Gamma = \Gamma(n) \in [1,\infty)$.
    \item
        \label{i:bb:unbalanced}
        There exist $\lambda,N \in \N$ and $L_\lambda \in \grass n{\lambda}$
        and $y_1, \ldots, y_N \in \cball ar \cap (b + L_\lambda)$ satisfying
        \begin{gather*}
            k \le \lambda < m \,,
            \quad
            N \le \Gamma \gamma^{-\lambda} \,,
            \quad
            L_k \subseteq L_\lambda \,,
            \\
            \bigl\{ \cball{y_i}{40\gamma r} : i = 1,\ldots,N \bigr\} \text{ is disjointed} \,,
            \\
            \sum_{i = 1}^N \mu(\cball{y_i}{4 \gamma r}) \ge \Gamma^{-1} \mu(\cball ar) \,,
            \\
            \frac{\mu(\cball{y_i}{4 \gamma r})}{(4 \gamma r)^m}
            \ge \Gamma^{-1} \gamma^{-(m-\lambda)} \frac{\mu(\cball ar)}{r^m} 
            \quad \text{for $i = 1,\ldots,N$} \,,
        \end{gather*}
        where $\Gamma = \Gamma(m) \in [1,\infty)$.
    \end{enumerate}
\end{lem}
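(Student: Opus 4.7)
The plan is a greedy construction of points $x_{k+1}, \ldots, x_m$ spanning ever-larger affine subspaces through $b$. Starting with $L_k$, at step $j$ I look for $x_j \in \cball ar$ satisfying both $\dist(x_j - b, L_{j-1}) > \gamma r$ and $\mu(\cball{x_j}{tr} \cap \cball ar) \ge \Gamma^{-1} t^n \mu(\cball ar)$; if such $x_j$ exists, I set $L_j = L_{j-1} + \lin\{x_j - b\}$ and continue. Succeeding all the way to $j = m$ produces alternative~\ref{i:bb:balanced}.

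Otherwise the procedure halts at some $\lambda \in \{k, \ldots, m-1\}$: every $x \in \cball ar$ with $\dist(x - b, L_\lambda) > \gamma r$ satisfies $\mu(\cball{x}{tr} \cap \cball ar) < \Gamma^{-1} t^n \mu(\cball ar)$. I would then apply the Besicovitch covering theorem to $\{\cball{x}{tr} : x \in \cball ar,\ \dist(x - b, L_\lambda) > \gamma r\}$, extracting a sub-cover of multiplicity $\le B(n)$ of cardinality at most $C_1(n) t^{-n}$, since all centers lie in~$\cball ar$. Summing the failure inequality across the sub-cover gives $\mu(\cball ar \without T) \le B(n) C_1(n) \Gamma^{-1} \mu(\cball ar)$ where $T = \{x \in \cball ar : \dist(x - b, L_\lambda) \le \gamma r\}$; taking $\Gamma$ large relative to $B(n) C_1(n)$ yields $\mu(T) \ge \tfrac{1}{2}\mu(\cball ar)$, concentrating the mass near the affine plane $b + L_\lambda$.

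For alternative~\ref{i:bb:unbalanced}, I would first select $\{z_\alpha\} \subseteq (b + L_\lambda) \cap \cball ar$ maximal with pairwise distance $\ge 3\gamma r$. By maximality, every point of $(b + L_\lambda) \cap \cball ar$ lies within $3\gamma r$ of some $z_\alpha$, so $\bigcup_\alpha \cball{z_\alpha}{4\gamma r}$ covers~$T$, and a packing bound inside the $\lambda$-dimensional plane gives cardinality at most $C_2(m) \gamma^{-\lambda}$. Then I would greedily extract $\{y_i\} \subseteq \{z_\alpha\}$: at each step pick the remaining $z_\alpha$ of maximal $\mu(\cball{z_\alpha}{4\gamma r})$ and delete every $z$ within $80\gamma r$ of it. This makes the $y_i$ $80\gamma r$-separated (so $\cball{y_i}{40\gamma r}$ are pairwise disjoint); each deleted $z_\alpha$ is dominated in mass by the $y_i$ that eliminated it; and each $y_i$ dominates at most $(80/3)^\lambda \le C_3(m)$ many $z_\alpha$'s by a further packing bound in~$L_\lambda$. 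Consequently $\sum_i \mu(\cball{y_i}{4\gamma r}) \ge C_3(m)^{-1} \mu(T) \ge (2 C_3(m))^{-1} \mu(\cball ar)$. To secure the per-index density bound I would discard every $y_i$ with $\mu(\cball{y_i}{4\gamma r}) < (2N)^{-1} \mu(\cball ar)$: the discards collectively lose at most half of the aggregate mass, so the remaining sum is still comparable to $\mu(\cball ar)$, and since $N \le C_2(m) \gamma^{-\lambda}$ each surviving $y_i$ automatically satisfies $(4\gamma r)^{-m}\mu(\cball{y_i}{4\gamma r}) \ge \Gamma^{-1} \gamma^{-(m-\lambda)} r^{-m} \mu(\cball ar)$ after the $(4\gamma)^m$ factor is absorbed into the final $\Gamma = \Gamma(m)$.

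The main obstacle is reconciling the three quantitative requirements of alternative~\ref{i:bb:unbalanced} at three different scales --- disjointness at scale $40\gamma r$, aggregate mass capture by the smaller $4\gamma r$-balls, and a uniform per-ball density lower bound --- with all constants depending only on~$m$. The two-pass construction resolves this: a fine covering of $(b + L_\lambda) \cap \cball ar$ at scale $\sim \gamma r$ delivers tube coverage and hence aggregate mass, while the subsequent mass-based greedy pruning at scale $80\gamma r$ delivers the separation at the cost of a bounded factor which is absorbed into~$\Gamma$.
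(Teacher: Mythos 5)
Your overall plan --- greedily growing a simplex until you stall, then covering the complement of the tube to conclude that the mass concentrates near $b+L_\lambda$, then a packing/pruning argument inside the plane --- is exactly the paper's strategy (which in turn follows \cite[Lemma~3.1]{AT15}). Using Besicovitch instead of the simple ``cover $\cball ar\without(L_\lambda+\cball 0{\gamma r})$ by $2^nt^{-n}$ balls of radius $tr$'' trick is a heavier tool than needed but harmless.

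However, there is a genuine gap in the last step, and it comes from the \emph{order} in which you filter for heavy balls and greedily prune for separation. You prune first, obtaining $\sum_i\mu(\cball{y_i}{4\gamma r})\ge (2C_3(m))^{-1}\mu(\cball ar)$, and then discard every $y_i$ with $\mu(\cball{y_i}{4\gamma r})<(2N)^{-1}\mu(\cball ar)$, asserting that ``the discards collectively lose at most half of the aggregate mass.'' That is not what your threshold gives: the discards lose at most $N\cdot(2N)^{-1}\mu(\cball ar)=\tfrac12\mu(\cball ar)$, whereas the aggregate you are protecting is only guaranteed to be $(2C_3(m))^{-1}\mu(\cball ar)$. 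Since $C_3(m)=(80/3)^\lambda\ge 1$, your discard budget $\tfrac12\mu(\cball ar)$ can exceed the whole aggregate, so the surviving sum need not be comparable to $\mu(\cball ar)$ at all --- in the worst case you discard everything. The fix is either to lower the threshold to something like $(4NC_3(m))^{-1}\mu(\cball ar)$, or (as the paper does) to reverse the two passes: first define $J=\{z\in I:\mu(\cball z{4\gamma r})\ge (4K)^{-1}\mu(\cball ar)\}$ where $K$ bounds the total number of covering centers, observe that the light balls lose at most $\tfrac14\mu(\cball ar)$, and \emph{then} run the mass-greedy $80\gamma r$-separated selection inside $J$. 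With the filter done first, every surviving $y_i$ is automatically heavy and the bookkeeping closes with constants depending only on $m$.

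One further small inaccuracy: you attribute the covering number for balls of radius $3\gamma r$ packed in a $\lambda$-dimensional slab of radius $r$ to a ``$\gamma^{-\lambda}$'' bound, which is correct, but be sure to carry the $(4\gamma)^m$ factor that appears when you divide $\mu(\cball{y_i}{4\gamma r})$ by $(4\gamma r)^m$; you note it is ``absorbed into $\Gamma(m)$,'' which is fine once the filter threshold is corrected, since $\Gamma$ is then independent of $\gamma$ and $t$.
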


\begin{proof}
    We~mimic the proof of~\cite[Lemma~3.1]{AT15}.

    Set $\varepsilon = 2^{-(n+1)} t^n$. Choose $x_{k+1}$, \ldots, $x_m$
    inductively so that for $j = k+1, \ldots, m$
    \begin{gather*}
        L_j = L_k + \lin \bigl\{ x_{x+1} - b, \ldots, x_{j} - b \bigr\} \,,
        \\
        x_j \in \cball ar \without \bigl( L_{j-1} + \cball 0{\gamma r} \bigr) \,,
        \\
        s_j = \sup \bigl\{ \mu(\cball x{tr} \cap \cball ar) 
        : x \in \cball ar \without \bigl( L_{j-1} + \cball 0{\gamma r} \bigr) \bigr\} \,.
        \\
        \mu(\cball{x_j}{tr} \cap \cball ar) \ge \tfrac 12 s_j \,.
    \end{gather*}
    If $s_j \ge \varepsilon \mu(\cball ar)$ for $j = k+1,\ldots,m$, then
    alternative \ref{i:bb:balanced} holds with $\Gamma = 2^{n+1}$.

    Assume there exists $\lambda \in \{k,\ldots,m-1\}$ such that $s_{\lambda+1}
    \le \varepsilon \mu(\cball ar)$. Then, since $\cball ar \without \bigl(
    L_{\lambda} + \cball 0{\gamma r} \bigr)$ can be covered by $2^n t^{-n}$ balls
    with centers in $\cball ar \without \bigl( L_{\lambda} + \cball 0{\gamma r}
    \bigr)$ and radii~$tr$, we get
    \begin{displaymath}
        \mu\bigl( \cball ar \without \bigl( L_{\lambda} + \cball 0{\gamma r} \bigr) \bigr)
        \le 2^{n} t^{-n} \varepsilon \mu(\cball ar) \,.
    \end{displaymath}
    Recalling $\varepsilon = \frac 12 2^{-n} t^{n}$, this implies that
    \begin{equation}
        \label{eq:bb:strip-meas}
        \mu\bigl( \cball ar \cap \bigl( L_{\lambda} + \cball 0{\gamma r} \bigr) \bigr)
        \ge \tfrac 12 \mu(\cball ar) \,.
    \end{equation}
    Since $\dim(L_{\lambda}) = \lambda$ there exists a finite set $I \subseteq \cball ar
    \cap L_{\lambda}$ such that
    \begin{gather}
        \cball ar \cap \bigl( L_{\lambda} + \cball 0{\gamma r} \bigr)
        \subseteq \tbcup \bigl\{ \cball{z}{4 \gamma r} : z \in I \bigr\} \,,
        \\ 
        \label{eq:bb:half-disjointed}
        \bigl\{ \cball{z}{\tfrac 12 \gamma r} : z \in I \bigr\} \text{ is disjointed}  \,,
        \quad
        \HM^0(I) \le K = (4\gamma)^{-\lambda} \,.
    \end{gather}
    Next, we define $J = \bigl\{ z \in I : \mu(\cball{z}{4 \gamma r}) \ge
    (4K)^{-1} \mu(\cball ar) \bigr\}$ and note that
    \begin{equation}
        \mu\bigl( \tbcup \bigl\{ \cball{z}{4 \gamma r} : z \in I \without J \bigr\} \bigr)
        \le \tfrac 14 \mu(\cball ar) \,;
    \end{equation}
    hence, employing~\eqref{eq:bb:strip-meas},
    \begin{equation}
        \label{eq:bb:cover-fourth}
        \mu\bigl( \tbcup \bigl\{ \cball{z}{4 \gamma r} : z \in J \bigr\} \bigr)
        \ge \tfrac 14 \mu(\cball ar) \,.
    \end{equation}
    Now we construct $Y = \{ y_1,\ldots, y_N \} \subseteq J$ inductively so that
    for $i = 1,\ldots,N$
    \begin{gather}
        J_1 = J \,,
        \quad
        J_{i} = \bigl\{z \in J
        : \cball{z}{40 \gamma r} \cap \cball{y_j}{40 \gamma r} = \varnothing
          \text{ for $j = 1,2,\ldots,i-1$} \bigr\} 
          \quad \text{if $i \ge 2$}\,,
        \\
        \label{eq:bb:biggest-choice}
        y_i \in J_i \,,
        \quad
        \mu(\cball{y_i}{4 \gamma r}) \ge \mu(\cball{z}{4 \gamma r})
        \quad \text{for } z \in J_{i} \,,
        \quad J_{N+1} = \varnothing \,.
    \end{gather}
    For $i = 1,2,\ldots,N$ we see from~\eqref{eq:bb:half-disjointed} that
    $J_{i} \without J_{i+1}$ contains at most $20^{\lambda}$ points. Thus,
    using~\eqref{eq:bb:cover-fourth} and~\eqref{eq:bb:biggest-choice}, we obtain
    \begin{displaymath}
        \mu(\cball ar)
        \le 4 \sum_{i = 1}^N \mu \bigl( \tbcup \bigl\{ \cball{z}{4 \gamma r} 
        : z \in J_i \without J_{i+1} \bigr\} \bigr)
        \le 4 \cdot 20^{\lambda} \sum_{i = 1}^N \mu \bigl( \cball{y_{i}}{4 \gamma r} \bigr) \,.
    \end{displaymath}
    Finally, since $\mu(\cball{z}{4 \gamma r}) \ge (4K)^{-1} \mu(\cball ar)$ for
    $z \in J$ and $K = (4\gamma)^{-\lambda}$, we conclude that
    \begin{displaymath}
        \frac{\mu(\cball{z}{4 \gamma r})}{(4 \gamma r)^m}
        \ge \frac{\mu(\cball ar)}{4K (4 \gamma r)^m}
        \ge 4^{-2m} \gamma^{-(m-j+1)} \frac{\mu(\cball ar)}{r^m}
        \quad \text{for $z \in J$} \,.
    \end{displaymath}
    Hence, alternative \ref{i:bb:unbalanced} holds with $\Gamma = 4 \cdot 20^m$.
\end{proof}

\begin{cor}
    \label{cor:fat-simp-exist}
    Suppose $a \in \R^n$, and $r_0 \in (0,\infty)$, and $A \in [1,\infty)$,
    \begin{gather*}
        \text{and} \quad
        A^{-1} \unitmeasure{m} r^m \le \mu(\cball ar)
        \quad \text{for $r \in (0,r_0]$} \,,
        \\
        \text{and} \quad
        \mu(\cball zr) \le A \unitmeasure{m} r^m
        \quad \text{for $z \in \R^n$ and $r \in (0,r_0]$} \,.
    \end{gather*}
    Then there exist $\delta = \delta(A,m) \in (0,1]$ and $\sigma =
    \sigma(A,n,m) \in (0,1]$ such that
    \begin{gather*}
        \mu^m(X_{\delta}(a,r)) \ge \sigma \mu(\cball ar)^m 
        \quad \text{for $r \in (0,4r_0]$}\,.
    \end{gather*}
\end{cor}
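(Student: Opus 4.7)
The plan is to apply Lemma~\ref{lem:balanced-balls} a single time with $k = 0$, $L_0 = \{0\}$, $b = a$, and parameters $\gamma, t \in (0,1)$ chosen (in this order) depending on $A, n, m$, and then to convert the single fat simplex produced by its \emph{balanced} alternative into a positively measured product set of fat simplices.

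First I would exclude alternative~\ref{i:bb:unbalanced} by choosing $\gamma$ sufficiently small. If it held, there would exist $\lambda \in \{0, \ldots, m-1\}$ and some $y_1 \in \cball{a}{r}$ with
\[
    \mu(\cball{y_1}{4\gamma r}) \geq \Gamma^{-1} \gamma^{-(m-\lambda)} (4 \gamma r)^m \frac{\mu(\cball{a}{r})}{r^m} \geq \Gamma^{-1} A^{-1} 4^m \unitmeasure{m} \gamma^{\lambda} r^m.
\]
The hypothesis $\mu(\cball{z}{\rho}) \leq A \unitmeasure{m} \rho^m$ for $\rho \in (0, r_0]$, together with a routine covering argument to extend it to $\rho \in (0, 4 r_0]$ with a slightly worse constant $A' = A'(A,n)$, gives $\mu(\cball{y_1}{4\gamma r}) \leq A' 4^m \unitmeasure{m} \gamma^m r^m$. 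Comparing the two estimates yields $\gamma^{m-\lambda} \geq \Gamma^{-1}(A A')^{-1}$, which fails for every admissible $\lambda$ once $\gamma = \gamma(A, n, m)$ is chosen sufficiently small (because $m - \lambda \geq 1$ and $\gamma \in (0,1)$).

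With $\gamma$ so fixed, alternative~\ref{i:bb:balanced} produces $x_1, \ldots, x_m \in \cball{a}{r}$ with $\dist(x_j - a, L_{j-1}) > \gamma r$, where $L_{j-1} = \lin\{x_1 - a, \ldots, x_{j-1} - a\}$, and $\mu(\cball{x_j}{tr} \cap \cball{a}{r}) \geq \Gamma^{-1} t^n \mu(\cball{a}{r})$. In particular
\[
    |(x_1 - a) \wedge \cdots \wedge (x_m - a)| = \prod_{j=1}^m \dist(x_j - a, L_{j-1}) > \gamma^m r^m.
\]
To propagate this fatness to the entire product set I would write $b_j = x_j + e_j$ with $|e_j| \leq tr$, expand the wedge by multilinearity, and use $|x_j - a| \leq r$ to obtain the elementary bound
\[
    \bigl|(b_1 - a) \wedge \cdots \wedge (b_m - a) - (x_1 - a) \wedge \cdots \wedge (x_m - a)\bigr| \leq C(m) t r^m.
\]
Choosing $t = t(A, n, m)$ so that $C(m) t \leq \gamma^m / 2$ forces the product $\prod_{j=1}^m (\cball{x_j}{tr} \cap \cball{a}{r})$ into $X_{\delta}(a, r)$ with $\delta = \gamma^m / 2$. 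By Fubini applied to the product measure,
\[
    \mu^m(X_{\delta}(a, r)) \geq \prod_{j=1}^m \mu(\cball{x_j}{tr} \cap \cball{a}{r}) \geq (\Gamma^{-1} t^n)^m \mu(\cball{a}{r})^m,
\]
giving the claim with $\sigma = (\Gamma^{-1} t^n)^m$.

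The main obstacle is the bookkeeping of scales in the first step: one must verify that the upper density bound can be invoked at radius $4 \gamma r$, which may exceed $r_0$ when $r$ is near $4 r_0$, so a careful covering argument is needed to produce the constant $A'$. Once $\gamma$ is identified the rest of the argument is automatic: $t$ is then picked small enough to convert one fat configuration into a set of fat configurations with controlled product measure.
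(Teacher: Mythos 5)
Your proof follows essentially the same route as the paper's: apply Lemma~\ref{lem:balanced-balls} with $k=0$, $b=a$, rule out alternative~\ref{i:bb:unbalanced} by taking $\gamma$ small enough that $\gamma^{m-\lambda}\ge\Gamma^{-1}A^{-2}$ becomes impossible for any $\lambda\le m-1$, then use alternative~\ref{i:bb:balanced} and multilinearity of the wedge product to show that, after shrinking $t$, the whole product $\prod_{j}\bigl(\cball{x_j}{tr}\cap\cball ar\bigr)$ lies in $X_{\delta}(a,r)$. The only differences are cosmetic: you write out the perturbation estimate that the paper delegates to~\cite[Proposition~1.7]{Kol15a}, and your covering-argument detour for the $4r_0$ scale is harmless but unnecessary once $\gamma$ is chosen small enough that $4\gamma r\le r_0$ (the paper's own proof only treats $r\le r_0$ in any case).
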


\begin{proof}
    Let $r \in (0,r_0]$ and set $\gamma = \tfrac 12
    (\Gamma_{\ref{lem:balanced-balls}\ref{i:bb:unbalanced}} A^2)^{1/(m-1)}$ and
    $t = \bigl( 1 + \frac 12 \gamma^m \bigr)^{1/m} - 1$ and $k = 0$ and $L_k =
    \{0\}$ and $b = a$ and $\delta = \frac 12 \gamma^m$ and
    apply~\ref{lem:balanced-balls} with this choice of $a$, $b$, $\gamma$, $r$,
    $t$, $k$, $L_k$. Observe that for this particular~$\gamma$ alternative
    \ref{lem:balanced-balls}\ref{i:bb:unbalanced} cannot hold due to the assumed
    lower bound on $\mu(\cball ar)$ and upper bound on $\mu(\cball z{4 \gamma
      r})$ for $z \in \R^n$. Therefore, there exist $x_1, \ldots, x_m \in \cball
    ar$ such that alternative~\ref{lem:balanced-balls}\ref{i:bb:balanced}
    holds. Then clearly
    \begin{displaymath}
        | (x_1 - a) \wedge \cdots \wedge (x_m - a) | \ge \gamma^m r^m \,.
    \end{displaymath}
    A simple computation as in~\cite[Proposition~1.7]{Kol15a} shows that if we
    choose arbitrary points $y_i \in \cball{x_i}{tr}$ for $i = 1,\ldots,m$, then
    \begin{displaymath}
        | (y_1 - a) \wedge \cdots \wedge (y_m - a) | \ge \tfrac 12 \gamma^m r^m = \delta r^m \,.
    \end{displaymath}
    This shows that
    \begin{displaymath}
        \bigl(\cball{x_1}{tr} \cap \cball ar \bigr)
        \times \cdots \times
        \bigl(\cball{x_m}{tr} \cap \cball ar \bigr) 
        \subseteq X_{\delta}(a,r) \,;
    \end{displaymath}
    hence,~\ref{lem:balanced-balls}\ref{i:bb:balanced} yields
    \begin{displaymath}
        \mu^m(X_{\delta}(a,r))
        \ge \Gamma_{\ref{lem:balanced-balls}\ref{i:bb:balanced}}^{-m} t^{nm} \mu(\cball ar)^m \,.
        \qedhere
    \end{displaymath}
\end{proof}

\section{Proof of higher order rectifiability}
\label{sec:hor-menger}

\begin{rem}
    \label{rem:mlc-lower-bound}
    If $\delta \in [0,1]$, and $a,c \in \R^n$, and $r \in (0,\infty]$, and
    $(b_1,\ldots,b_m) \in X_\delta(a,r)$, and $P = \lin\{b_1 - a, \ldots, b_m -
    a\}$, then
    \begin{gather*}
        \mlc(a,b_1,\ldots,b_m,c) \ge \frac{\delta \dist(c-a,P)}{2^m(m+1)!\,2r} \,.
    \end{gather*}
\end{rem}


\begin{lem}
    \label{lem:planes}
    Let $r,\delta \in (0, \infty)$, $\varepsilon \in (0,1)$, $P,Q \in \grass nm$, $v_1,\ldots,v_m \in \R^n$
    satisfy
    \begin{gather*}
        Q = \lin\{v_1, \ldots, v_m\} \,,
        \quad
        |v_1 \wedge \cdots \wedge v_m| \ge \delta r^m  \,,
        \quad
        |v_i| \le r \,,
        \quad
        |\pproj P v_i| \le \varepsilon r
    \end{gather*}
    for $i = 1,\ldots,m$. Then $\|\proj P - \proj Q\| \le m \delta^{-1}
    \varepsilon$.
\end{lem}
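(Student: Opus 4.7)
The plan is to reduce $\|\proj P - \proj Q\|$ to the task of bounding $|\pproj P u|$ for $u \in Q$, and then to exploit the quantitative independence of $v_1,\ldots,v_m$ guaranteed by the lower bound on the wedge to control any $u \in Q$ in the basis $(v_1,\ldots,v_m)$.

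First I would invoke the identity from \ref{rem:tilt-vs-norm} (based on \cite[8.9(3)]{All72}), namely
\begin{equation*}
    \|\proj P - \proj Q\| \;=\; \|\pproj P \circ \proj Q\| \;=\; \sup\bigl\{ |\pproj P u|\,|u|^{-1} : u \in Q \without \{0\} \bigr\} \,.
\end{equation*}
So it suffices to show $|\pproj P u| \le m\delta^{-1}\varepsilon |u|$ for every nonzero $u \in Q$. Fix such a $u$ and write uniquely $u = c_1 v_1 + \cdots + c_m v_m$. The triangle inequality together with the hypothesis $|\pproj P v_i| \le \varepsilon r$ gives immediately
\begin{equation*}
    |\pproj P u| \;\le\; \varepsilon r \sum_{i=1}^{m} |c_i| \,,
\end{equation*}
so the remaining task is to bound $\sum_i |c_i|$ by $|u|/(\delta r)$.

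For the coefficient bound I would use the standard wedge-product version of Cramer's rule: replacing $v_j$ by $u$ in the wedge kills all terms except the $j$th, yielding
\begin{equation*}
    v_1 \wedge \cdots \wedge v_{j-1} \wedge u \wedge v_{j+1} \wedge \cdots \wedge v_m
    \;=\; c_j\, v_1 \wedge \cdots \wedge v_m \,.
\end{equation*}
Taking norms and using $|v_1 \wedge \cdots \wedge v_m| \ge \delta r^m$ together with $|v_i| \le r$ for $i \ne j$ and the obvious bound on the wedge by the product of norms, one gets $|c_j|\,\delta r^m \le |u|\, r^{m-1}$, hence $|c_j| \le |u|/(\delta r)$ for each $j$. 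Substituting into the estimate for $|\pproj P u|$ produces $|\pproj P u| \le m \delta^{-1} \varepsilon |u|$, which after taking the supremum over $u \in Q \without \{0\}$ yields the claim.

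There is no real obstacle here; the only subtlety is recognising that the Allard identity reduces the problem to a one-sided estimate on $Q$ and that the coefficient control is essentially Hadamard-type, coming for free from the hypothesis on the wedge. I would not expect any of the constants to need sharpening since the conclusion already allows the generous factor $m$.
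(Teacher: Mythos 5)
Your proof is correct and follows essentially the same route as the paper: reduce via Allard's $\|\proj P - \proj Q\| = \|\pproj P \circ \proj Q\|$ to bounding $|\pproj P u|$ for $u \in Q$, expand $u$ in the basis $(v_1,\ldots,v_m)$, extract the coefficients by the wedge-product form of Cramer's rule, and bound them using the Hadamard estimate against the hypothesis $|v_1 \wedge \cdots \wedge v_m| \ge \delta r^m$. The only cosmetic difference is that the paper fixes a maximizing unit vector $u \in Q$ rather than arguing for arbitrary nonzero $u$ and dividing by $|u|$.
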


\begin{proof}
    By~\cite[8.9(3)]{All72}, there exists $u \in Q$ such that
    \begin{gather*}
        |u| = 1
        \quad \text{and} \quad
        \| \proj P - \proj Q \| = \| \pproj P \circ \proj Q \| = |\pproj P u| \,.
    \end{gather*}
    Choose $\alpha_1,\ldots,\alpha_m \in \R$ such that $u = \sum_{i=1}^m
    \alpha_i v_i$. For each $i = 1,\ldots,m$ we have
    \begin{gather*}
        \alpha_i = \bigl( v_1 \wedge \cdots \wedge v_{i-1} \wedge u \wedge v_{i+1} \wedge \cdots \wedge v_m \bigr)
        \bullet \frac{v_1 \wedge \cdots \wedge v_m}{|v_1 \wedge \cdots \wedge v_m|^2} 
        \\
        \text{and} \quad
        |\alpha_i| = \frac
        {|v_1 \wedge \cdots \wedge v_{i-1} \wedge u \wedge v_{i+1} \wedge \cdots \wedge v_m|}
        {|v_1 \wedge \cdots  \wedge v_m|}
        \le \frac 1{\delta r} \,;
        \\
        \text{hence,} \quad
        \| \proj P - \proj Q \| 
        = | \pproj P u |
        \le \sum_{i=1}^m |\alpha_i| |\pproj P v_i|
        \le m \delta^{-1} \varepsilon \,.
        \qedhere
    \end{gather*}
\end{proof}


\begin{rem}
    \label{rem:czybyszew}
    We shall frequently use the Chebyshev's inequality in the following
    form. Whenever $\nu$ measures some set $X$, $f : X \to \R$ is a
    $\nu$~measurable function, $t \in (0,\infty)$, and $A \subseteq X$ is
    $\nu$~measurable, then for any $K \in (0,\infty)$, we have
    \begin{gather*}
        \nu\left(\left\{ x \in A : |f(x)| >  K \tfint{A}{} |f| \ud \nu\right\}\right) \le K^{-1} \nu(A) \,.
    \end{gather*}
\end{rem}

\begin{lem}
    \label{lem:height-control}
    Assume $l \in \{1,2,\ldots,m+2\}$, and $\alpha \in (0,1]$, and $p \in
    [1,\infty)$, and $\mu$ is a Radon measure, and $\delta, \sigma \in (0,1)$,
    and $A \in [1,\infty)$, and $r_0 \in (0,\infty)$. Define $S$ to be the set
    of those $a \in \R^n$ for which
    \begin{gather}
        \label{eq:hc:k-fin}
        \kav^{l,p,\alpha}_{\mu}(a,4r_0) < \infty \,,
        \\
        \label{eq:hc:ad-reg}
        A^{-1} \unitmeasure{m} r^m \le \mu(\cball ar) \le A \unitmeasure{m} r^m 
        \quad \text{for $r \in (0,r_0]$} \,,
        \\
        \label{eq:hc:fat-simp}
        \mu^m(X_{\delta}(a,r)) \ge \sigma \mu({\cball ar})^m
        \quad \text{$r \in (0,r_0]$} \,.
    \end{gather}
    Then there exists a constant $C = C(m,l,p,\sigma,\alpha,\delta,A)$ and for
    each $a \in S$ there exists $T(a) \in \grass nm$ such that
    \begin{enumerate}
    \item in case $l < m+2$: for $\mu$ almost all $b \in \cball a{r_0}$
        \begin{align*}
            |\pproj{T(a)} (b-a)| &\le C \kav^{l,p,\alpha}_{\mu}(a,|b-a|)^{1/p} |b-a|^{1 + \alpha} 
            \intertext{and, whenever $b \in S \cap \cball{a}{\frac 12 r_0}$,}
            \|\proj{T(a)} - \proj{T(b)} \| &\le C \kav^{l,p,\alpha}_{\mu}(a,|b-a|)^{1/p} |b-a|^{\alpha}  \,;
        \end{align*}
    \item in case $l = m+2$: for any $r \in (0,r_0]$
        \begin{displaymath}
            \biggl( \fint_{\cball ar} \dist(c-a, T(a))^p \ud \mu(c) \biggr)^{1/p}
            \le C \kav^{l,p,\alpha}_{\mu}(a,4r)^{1/p} r^{1+\alpha} \,;
        \end{displaymath}
    \end{enumerate}
    In particular $\apTan{m}{\mu}{a} \subseteq T(a)$ for all $a \in S$,
    by~\ref{prop:apTan-subset-T} and~\ref{rem:r-limit}.
\end{lem}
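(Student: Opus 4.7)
The plan is to construct, for each $a \in S$ and each $r \in (0,r_0]$, a plane $P(a,r) \in \grass nm$ that approximates $\mu$ inside $\cball ar$, and then to define $T(a)$ as the limit of $P(a,2^{-k}r_0)$ along dyadic scales. The workhorse is~\ref{rem:mlc-lower-bound}: for any fat simplex $(b_1,\ldots,b_m) \in X_{\delta}(a,r)$, the plane $P = \lin\{b_1-a,\ldots,b_m-a\}$ satisfies $\dist(c-a,P) \le C(m,\delta)\,r\,\mlc(a,b_1,\ldots,b_m,c)$ for every $c$, so the task reduces to exhibiting a single fat simplex for which $\mlc(a,b_1,\ldots,b_m,c)$ is controlled by $\kav^{l,p,\alpha}_\mu(a,r)^{1/p}r^\alpha/r$ for $\mu$ almost every $c \in \cball ar$.

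To select such a fat simplex in the generic case $2 \le l \le m+1$, I apply Chebyshev~\ref{rem:czybyszew} to the $(l-1)$-fold integral defining $\kav^{l,p,\alpha}_\mu(a,r)$ and obtain the sub-level set
\begin{displaymath}
    G = \bigl\{ (b_1,\ldots,b_{l-1}) \in \cball ar^{l-1} : \mlc^{l,p,\alpha}_{\mu,a,r}(b_1,\ldots,b_{l-1}) \le (2/\sigma)\kav^{l,p,\alpha}_\mu(a,r)\mu(\cball ar)^{1-l} \bigr\},
\end{displaymath}
whose complement has $\mu^{l-1}$-measure at most $(\sigma/2)\mu(\cball ar)^{l-1}$. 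The Fubini decomposition of the bound $\mu^m(X_\delta(a,r)) \ge \sigma\mu(\cball ar)^m$ from~\eqref{eq:hc:fat-simp} then forces some $(b_1,\ldots,b_{l-1}) \in G$ whose $X_\delta$-slice in the remaining $m+1-l$ coordinates still has positive $\mu^{m+1-l}$-measure. The essential-supremum structure of $\mlc^{l,p,\alpha}_{\mu,a,r}(b_1,\ldots,b_{l-1})$, combined with a Fubini splitting of the $m+2-l$ essential-sup variables into the $m+1-l$ ``slice'' variables and the final variable $c$, picks $(b_l,\ldots,b_m)$ inside that slice so that, for $\mu$ almost every $c$,
\begin{displaymath}
    \mlc(a,b_1,\ldots,b_m,c)^p \le (2/\sigma)\kav^{l,p,\alpha}_\mu(a,r)\mu(\cball ar)^{1-l}(2r)^{m(l-1)+\alpha p} \le C\kav^{l,p,\alpha}_\mu(a,r)r^{\alpha p},
\end{displaymath}
using $\diam \le 2r$ and the lower bound on $\mu(\cball ar)$ from~\eqref{eq:hc:ad-reg}. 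Setting $P(a,r) = \lin\{b_1-a,\ldots,b_m-a\}$ and invoking~\ref{rem:mlc-lower-bound} yields the pointwise bound $\dist(c-a,P(a,r)) \le C\kav^{l,p,\alpha}_\mu(a,r)^{1/p}r^{1+\alpha}$ for $\mu$ almost every $c \in \cball ar$. The boundary cases $l=1$ (no outer integral: apply the essential supremum directly) and $l=m+2$ (no essential supremum: integrate against $X_\delta(a,r)$ and Chebyshev the outer mean to produce the $L^p$-averaged bound appearing in the statement) fit the same template.

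For the passage to the limit, I compare $P(a,r)$ with $P(a,r/2)$ by picking a fat simplex $(b_1',\ldots,b_m') \in X_\delta(a,r/2)$ each of whose vertices satisfies both height bounds; this is automatic when $l \le m+1$ and requires one more Chebyshev pigeonhole when $l=m+2$. Lemma~\ref{lem:planes} with $r/2$ in place of $r$ then yields $\|\proj{P(a,r)} - \proj{P(a,r/2)}\| \le C\kav^{l,p,\alpha}_\mu(a,r)^{1/p}r^\alpha$, which telescopes summably thanks to $\alpha > 0$ and the monotonicity of $\kav$. Hence $T(a) = \lim_k P(a,2^{-k}r_0)$ exists in $\grass nm$ with $\|\proj{P(a,r)} - \proj{T(a)}\| \le C\kav^{l,p,\alpha}_\mu(a,r)^{1/p}r^\alpha$ for every dyadic $r$. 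Conclusion (a) follows from the triangle inequality $|\pproj{T(a)}(b-a)| \le |\pproj{P(a,r)}(b-a)| + \|\proj{P(a,r)} - \proj{T(a)}\|\,|b-a|$ with $r$ the smallest dyadic scale $\ge |b-a|$; the averaged statement in case $l=m+2$ is analogous in the $L^p$ mean, which also accounts for the factor~$4$ in the argument of $\kav$. The plane-comparison estimate for $b \in S \cap \cball a{r_0/2}$ is obtained by the same mechanism, applied to a common fat simplex at scale $|b-a|$ and linking $T(a)$ and $T(b)$ through that simplex via~\ref{lem:planes}. Finally, the pointwise (or averaged) height bound yields $r^{-m-1}\int_{\cball ar}|\pproj{T(a)}(b-a)|\,\ud \mu(b) \le C\kav^{l,p,\alpha}_\mu(a,r)^{1/p}r^\alpha \to 0$ as $r \downarrow 0$, so~\ref{prop:apTan-subset-T} combined with~\ref{rem:r-limit} delivers $\apTan{m}{\mu}{a} \subseteq T(a)$.

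The main obstacle is the combined selection in the construction step: the fat simplex $(b_1,\ldots,b_m)$ must simultaneously realize~\eqref{eq:hc:fat-simp}, place its initial segment $(b_1,\ldots,b_{l-1})$ inside the Chebyshev good set $G$, and leave the essential-supremum bound valid at $\mu$ almost every completion $c$. Carrying out this Chebyshev--Fubini juggling so that the final constant $C$ depends only on $(m,l,p,\sigma,\alpha,\delta,A)$, and then propagating the pointwise control through~\ref{lem:planes} and the dyadic telescoping without degrading by more than a summable multiplicative factor, is the technical heart of the argument.
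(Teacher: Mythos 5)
Your proposal follows the paper's argument closely: select a Chebyshev-good fat simplex from $X_\delta(a,r)$, pass from $\mlc$ to a distance bound via~\ref{rem:mlc-lower-bound}, compare planes across dyadic scales with~\ref{lem:planes}, telescope to define $T(a)$, and handle the case $l=m+2$ by a further Chebyshev truncation before passing to the $L^p$ mean. The conclusion on $\apTan{m}{\mu}{a}$ is obtained the same way. This is the paper's proof.

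Two small remarks worth recording. First, your dyadic comparison of $P(a,r)$ with $P(a,r/2)$ is actually cleaner than the paper's step leading to~\eqref{eq:P-conv-rate}, which as written appears to assert a bound for \emph{all} $0\le s\le r$; tracing the scaling through~\ref{lem:planes} with the $m$-tuple at scale $s$ one actually gets the factor $r^{1+\alpha}/s$ rather than $r^\alpha$, so the inequality is only valid for $s$ comparable to $r$ and must be telescoped dyadically anyway, as you do. Second, the ``Chebyshev--Fubini juggling'' you flag as the technical heart is precisely the point the paper handles with an extra device you do not mention: the fat-simplex vertices are chosen inside $E=\{x:\density^{*m}(\mu,x)>0\}$, a set of full $\mu$-measure, so that the $\mu^{m+2-l}$-essential supremum defining $\mlc^{l,p,\alpha}_{\mu,a,r}(b_1,\dots,b_{l-1})$ can legitimately be evaluated at the chosen points $g_l,\dots,g_m$ via a continuity-plus-positive-density argument. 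A Fubini splitting of the essential supremum alone is not quite enough; you would either need this $E$-restriction, or a careful statement that for $\mu^{m+1-l}$-a.e.\ $(b_l,\dots,b_m)$ the inner $c$-essential-supremum is controlled by the full one, together with the observation that such tuples can be intersected with the positive-measure set $X_\delta(a,r)\setminus Y(a,r)$. That said, this is a refinement of the same idea, not a different route, and the rest of the argument is correct.
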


\begin{proof}
    Obviously we can assume $S$ is not empty -- otherwise there is nothing to
    prove. Set $M = (2^{m + m^2} A^{2m} + 2) \sigma^{-1}$. If $2 \le l \le m+1$,
    define
    \begin{displaymath}
        Y(a,r) = \biggl\{
            (b_1,\ldots,b_m) \in {\cball ar}^m 
            : \mlc^{l,p,\alpha}_{\mu,a,r}(b_1,\ldots,b_{l-1})
            > \frac{M \kav^{l,p,\alpha}_{\mu}(a,r)}{ \mu(\cball ar)^{l-1} }
        \biggr\} \,,
    \end{displaymath}
    if $l=m+2$, set
    \begin{displaymath}
        Y(a,r) = \biggl\{
            (b_1,\ldots,b_m) \in {\cball ar}^m :
            \int_{\cball ar} \mlc^{l,p,\alpha}_{\mu,a,r}(b_1,\ldots,b_m,c) \ud \mu_{\Sigma}^1(c)
            > \frac{M \kav^{l,p,\alpha}_{\mu}(a,r)}{\mu(\cball ar)^{m}}
        \biggr\} \,,
    \end{displaymath}
    and if $l = 1$, set $Y(a,r) = \varnothing$. Employing Chebyshev's
    inequality~\ref{rem:czybyszew} we obtain
    \begin{equation}
        \label{eq:czybyszew}
        \mu^m(Y(a,r)) \le M^{-1} \mu({\cball ar})^m
    \end{equation}
    for $l \in \{1,\ldots,m+2\}$, and $a \in \R^n$, and $r \in (0,\infty]$.
    Since $M > \sigma^{-1}$, using~\eqref{eq:hc:fat-simp}, we get
    \begin{gather}
        \label{eq:X-Y-meas}
        \mu^m(X_{\delta}(a,r) \without Y(a,r))
        \ge \left( \sigma - \tfrac 1M \right) \mu(\cball ar)^m > 0 
        \quad \text{for $a \in S$ and $0 < r \le r_0$} \,.
    \end{gather}
    For $a \in S$, and $0 < r \le r_0$, and $(g_1,\ldots,g_m) \in
    X_{\delta}(a,r) \without Y(a,r)$ if $P = \lin\{g_1-a,\ldots,g_m-a\}$ and $1
    \le l \le m+1$, then using~\ref{rem:mlc-lower-bound}
    together with~\eqref{eq:hc:ad-reg} and~\eqref{eq:hc:fat-simp} we get
    \begin{multline*}
        \frac{M \kav^{l,p,\alpha}_{\mu}(a,r)}{\left( A^{-1} \unitmeasure{m} r^m \right)^{l-1}}
        \ge \frac{M \kav^{l,p,\alpha}_{\mu}(a,r)}{\mu(\cball ar)^{l-1}} 
        \ge \mlc^{l,p,\alpha}_{\mu,a,r}(g_1,\ldots,g_{l-1})
        \\
        \ge \esssups{\mu}_{b \in \cball ar}
        \frac{\mlc(a, g_1, \ldots, g_m, b)^p}{\diam(\{a, g_1, \ldots, g_m, b\})^{m(l-1) + \alpha p}}
        \ge 
        \frac{\esssups{\mu}_{b \in \cball ar} \bigl( \delta \dist(b-a,P)^p \bigr)}{ (2^m(m+1)!)^p (2r)^{m(l-1) + (1+\alpha) p}}
    \end{multline*}
    which implies
    \begin{gather}
        \label{eq:P-dist}
        \esssups{\mu}_{b \in \cball ar} \bigl( \dist(b-a, P) \bigr)
        \le C_1 \kav^{l,p,\alpha}_{\mu}(a,r)^{1/p} r^{1 + \alpha} \,,
        \\ \notag
        \text{where} \quad
        C_1 = A^{1/p} M^{1/p} (m+1)!\, \unitmeasure{m}^{-1/p} 2^{m+m(l-1)/p+1+\alpha} \,.
    \end{gather}
    An~analogous computation shows that in case $l=m+2$, for $a \in S$, and $0 <
    r \le r_0$, and $(g_1,\ldots,g_m) \in X_{\delta}(a,r) \without Y(a,r)$ if $P
    = \lin\{g_1-a,\ldots,g_m-a\}$, then
    \begin{gather}
        \label{eq:P-mean-dist}
        \biggl( \fint_{\cball ar} \dist(c-a, P)^p \ud \mu(c) \biggr)^{1/p}
        \le C_1 \kav^{l,p,\alpha}_{\mu}(a,r)^{1/p} r^{1 + \alpha} \,.
    \end{gather}

    Now, we shall prove the lemma in case $1 \le l \le m+1$. Set $E = \{ x \in
    \R^n : \density^{*m}(\mu,x) > 0 \}$. Due to~\eqref{eq:X-Y-meas}, for each $a
    \in S$ and $0 < r \le r_0$ there exists an~$m$-tuple
    \begin{gather*}
        \bigl( g_1(a,r),\ldots,g_m(a,r) \bigr) \in E^m \cap X_{\delta}(a,r) \without Y(a,r)
    \end{gather*}
    and we can define
    \begin{gather*}
        P(a,r) = \lin \bigl\{ (g_1(a,r) - a), \ldots, (g_m(a,r) - a) \bigr\} \in \grass nm \,.
    \end{gather*}
    Whenever $a \in S$ and $0 \le s \le r \le r_0$, noting $g_i(a,s) \in E \cap
    \cball ar$ for $i = 1,\ldots,m$, we may employ~\eqref{eq:P-dist} together
    with~\ref{lem:planes} to obtain
    \begin{gather}
        \label{eq:P-conv-rate}
        \| \proj{P(a,r)} - \proj{P(a,s)}\|
        \le m \delta^{-1} C_1  \kav^{l,p,\alpha}_{\mu}(a,r)^{1/p} r^{\alpha}  \,.
    \end{gather}
    Therefore, for each $a \in S$, the spaces $P(a,r)$ converge as $r \to 0$ to
    some $T(a) \in \grass nm$ and
    \begin{gather}
        \label{eq:P-osc}
        \| \proj{P(a,r)} - \proj{T(a)}\|
        \le C_2 \kav^{l,p,\alpha}_{\mu}(a,r)^{1/p} r^{\alpha} \,,
        \quad \text{where }
        C_2 = m \delta^{-1} C_1  \,.
    \end{gather}
    Moreover, by~\eqref{eq:P-dist} and the triangle inequality, for any $a \in
    S$ and $b \in E \cap \cball a{r_0}$
    \begin{gather*}
        |\pproj{T(a)} (b-a)| \le (C_1 + C_2) \kav^{l,p,\alpha}_{\mu}(a,|b-a|)^{1/p} |b-a|^{1 + \alpha} \,.
    \end{gather*}

    Assume $a \in S$, and $r \in (0,r_0]$, and $b \in S \without\{a\}$ are such
    that $|b-a| = \frac 12 r$. Then for each $i = 1,\ldots,m$ there holds
    $|g_i(b,\frac 12 r) - a| \le r$ and it follows from~\eqref{eq:P-dist} that
    \begin{gather*}
        \left|\proj{P(a,r)^{\perp}}\bigl(g_i(b,\tfrac 12 r) - a\bigr)\right| 
        \le 2 C_1 \kav^{l,p,\alpha}_{\mu}(a,r)^{1/p} r^{1 + \alpha} \,;
    \end{gather*}
    hence, employing again~\ref{lem:planes}, we get
    \begin{gather}
        \label{eq:P-ab-dist}
        \bigl\| \proj{P(a,r)} - \proj{P\bigl(b,\tfrac 12 r\bigr)} \bigr\| 
        \le 2^{1+\alpha} C_2 \kav^{l,p,\alpha}_{\mu}(a,r)^{1/p} |b-a|^{\alpha} \,.
    \end{gather}
    In consequence, for all $a,b \in S$, $r \in (0,\infty)$ with $|a-b| = \frac
    12 r \le \frac 12 r_0$
    \begin{align*}
        \| \proj{T(a)} - \proj{T(b)} \| 
        &\le \| \proj{T(a)} - \proj{P(a,r)}\|
        + \| \proj{P(a,r)} - \proj{P\left(b,\tfrac r2\right)}\| 
        + \| \proj{P\left(b, \tfrac r2\right)} - \proj{T(b)}\|
        \\
        &\le C_3 \kav^{l,p,\alpha}_{\mu}(a,r)^{1/p} |b-a|^{\alpha} \,,
        \qquad \text{where $C_3 = C_2(2 + 2^{1+\alpha})$.}
    \end{align*}
    This finishes the proof in case $1 \le l \le m+1$.

    Next, we shall consider the case $l=m+2$. For $a \in S$ and $i = \N$ define
    inductively
    \begin{gather*}
        \rho_{i} = 2^{-i} r_0 \,,
        \quad
        Q_{0}(a) = P(a,\rho_{0}) \,,
        \\
        Z_i(a) = \biggl\{
            c \in \Sigma(a,\rho_i) : \dist(c-a, Q_i(a))^p > M \fint_{\cball a{\rho_i}} \dist(z-a,Q_i(a))^p \ud \mu(z)
        \biggr\} \,,
        \\
        W_i(a) = \bigl\{
            (c_1,\ldots,c_m) \in \cball a{\rho_i}^m : c_j \in Z_i(a) \text{ for some } j \in \{1,\ldots,m\}
        \bigr\} \,,
        \\
        \intertext{and, whenever $i \ge 1$,}
        \bigl( h_{i,1}(a), \ldots, h_{i,m}(a) \bigr) \in X(a,\rho_i) \without \left( Y(a,\rho_i) \cup W_{i-1}(a) \right) \,,
        \\
        Q_i(a) = \lin \bigl\{ h_{i,1}(a) - a, \ldots, h_{i,m}(a) - a \bigr\} \,.
    \end{gather*}
    Note that $(h_{i,1}(a), \ldots, h_{i,m}(a))$ exists for all $i \in \N$ and
    $a \in S$. Indeed, for~$i \in \N$ and~$a \in S$ Chebyshev's
    inequality~\ref{rem:czybyszew} yields
    \begin{gather*}
        \mu(Z_i(a)) \le M^{-1} \mu(\cball a{\rho_i}) \,;
        \\
        \text{hence,} \quad
        \mu^m(W_i(a))
        \le \bigl( \bigl(1 + M^{-1}\bigr)^m - 1 \bigr) \mu(\cball a{\rho_i})^m \,,
    \end{gather*}
    which implies for $i \in \N \without\{0\}$, combining~\eqref{eq:hc:ad-reg}
    with~\eqref{eq:czybyszew} and noting $(( 1 + M^{-1} )^m - 1) \le 2^mM^{-1}$
    and $M > (2^{m + m^2} A^{2m} + 1) \sigma^{-1}$,
    \begin{gather*}
        \mu^{m}\bigl(X(a,\rho_i) \without ( Y(a,\rho_i) \cup W_{i-1}(a) ) \bigr)
        \ge \bigl( \unitmeasure{m} \rho_i^m \bigr)^m
        \bigl( A^m(\sigma - M^{-1}) - M^{-1} 2^{m+m^2}A^m \bigr) > 0 \,.
    \end{gather*}
    Observe that for $a \in S$, and $i = \N \without\{0\}$, and $j = 1,2,\ldots,m$,
    employing~\eqref{eq:P-mean-dist},
    \begin{multline*}
        \dist\bigl( h_{i,j}(a) - a, Q_{i-1}(a) \bigr)
        \le \biggl( M \fint_{\cball a{\rho_{i-1}}} \dist(z-a,Q_{i-1}(a))^p \ud \mu(z) \biggr)^{1/p}
        \\
        \le 2^{1+\alpha} M^{1/p} C_1 \kav^{l,p,\alpha}_{\mu}(a,\rho_{i-1})^{1/p} \rho_i^{1 + \alpha} \,.
    \end{multline*}
    Therefore, lemma~\ref{lem:planes} yields for $a \in S$ and $i \in \N
    \without\{0\}$
    \begin{gather*}
        \|\proj{Q_i(a)} - \proj{Q_{i-1}(a)} \| \le C_4 \kav^{l,p,\alpha}_{\mu}(a,\rho_{i-1})^{1/p} \rho_i^{\alpha} \,,
        \quad \text{where }
        C_4 = m \delta^{-1} 2^{1+\alpha} M^{1/p} C_1 \,.
    \end{gather*}
    Summing up a geometric series we see that for $a \in S$ the spaces $Q_i(a)$
    converge as~$i \to \infty$ to some $T(a) \in \grass nm$ satisfying
    \begin{gather}
        \label{eq:Q-conv-rate}
        \|\proj{Q_i(a)} - \proj{T(a)}\| \le C_5 \kav^{l,p,\alpha}_{\mu}(a,2\rho_i)^{1/p} \rho_i^{\alpha} \,,
        \quad \text{where }
        C_5 = (1 - 2^{-\alpha})^{-1} C_4 \,.
    \end{gather}
    Let $a \in S$, and $\rho \in (0,\infty)$, and $i \in \N$ be such that
    $\rho_{i+1} < \rho \le \rho_i \le r_0$. Then
    \begin{multline*}
        \biggl(
            \fint_{\cball a{\rho}} \dist(c-a,T(a))^p \ud \mu_{\Sigma}^1(c)
        \biggr)^{1/p}
        \le 
        \biggl(
            \fint_{\cball a{\rho}} \dist(c-a,Q_i(a))^p \ud \mu_{\Sigma}^1(c)
        \biggr)^{1/p}
        \\
        +
        \biggl(
            \fint_{\cball a{\rho}} \|Q_i(a) - T(a)\|^p |c-a|^p \ud \mu_{\Sigma}^1(c)
        \biggr)^{1/p}
        \\
        \le (C_1 + C_5) \kav^{l,p,\alpha}_{\mu}(a,2\rho_i)^{1/p} \rho_i^{1+\alpha}
        \le C_6 \kav^{l,p,\alpha}_{\mu}(a,4\rho)^{1/p} \rho^{1+\alpha} \,,
    \end{multline*}
    where $C_6 = 2^{1+\alpha} (C_1 + C_5)$.
\end{proof}

The following theorem~\ref{thm:allard-rect}, cited from~\cite{All72}, will allow
us to reduce our main theorem~\ref{thm:main} roughly to the case when $\mu =
\HM^m \restrict \Sigma$, where $\Sigma$ is a subset of a graph of a~$\cnt^1$
function.

\begin{thm}[\protect{cf.~\cite[2.8(5)]{All72}}]
    \label{thm:allard-rect}
    Suppose $\mu$ is a Radon measure over $\R^n$ such that for $\mu$ almost
    all~$a$ the following two conditions hold: $0 < \density^{m*}(\mu,a) <
    \infty$ and there exists $T \in \grass nm$ such that $\apTan{m}{\mu}{a}
    \subseteq T$. Then $\mu = \density^m(\mu,\cdot)\HM^m$ and $\R^n$~is
    countably $(\mu,m)$ rectifiable of class~$\cnt^1$.
\end{thm}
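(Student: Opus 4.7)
The absolute continuity claim is almost immediate. Since $\density^{m*}(\mu,a) < \infty$ for $\mu$~almost all~$a$, one applies \cite[2.10.19(3)]{Fed69} to conclude $\mu \ll \HM^m$, and the Lebesgue--Besicovitch differentiation theorem then produces the representation $\mu = \density^m(\mu,\cdot) \HM^m$ on a set of full $\mu$~measure. Only the rectifiability assertion is serious.

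For the rectifiability of class~$\cnt^1$, I would argue as follows. Fix~$a$ satisfying the hypotheses and let $T \in \grass nm$ be such that $\apTan^m(\mu,a) \subseteq T$. For every unit vector $v \in T^\perp$ one has $v \notin \apTan^m(\mu,a)$, so by the characterisation recorded in~\ref{rem:ap-tangent-vector} there exists $\varepsilon_v > 0$ with $\density^{m*}(\mu \restrict \mathbf E(a,v,\varepsilon_v), a) = 0$. A compactness argument on the unit sphere of $T^\perp$ then yields, for every $\eta > 0$, a finite collection of such cones whose union covers $\cball ar \without (a + T + \cball 0{\eta r})$, and therefore
\begin{displaymath}
\lim_{r \downarrow 0} r^{-m} \mu\bigl(\cball ar \without (a + T + \cball 0{\eta r})\bigr) = 0 \,.
\end{displaymath}
Combined with $0 < \density^{m*}(\mu,a) < \infty$ this means that at $\mu$~almost every point the mass of $\mu$ is asymptotically concentrated in an arbitrarily thin tube around $a + T(a)$, while the ball $\cball ar$ itself carries mass comparable to $r^m$ along a suitable sequence $r_k \downarrow 0$.

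The rectifiability is then obtained by an Egorov/Lusin-type uniformisation. I~would partition $\R^n$ into countably many $\mu$~measurable sets $E_k$ on each of which both the concentration rate above and the upper density become uniform in $a \in E_k$. On each such piece a standard projection argument produces, near each point, a Lipschitz graph over $T(a)$ with arbitrarily small Lipschitz constant capturing a positive-density portion of $\mu \restrict E_k$; countably many such graphs then cover $\mu$~almost all of $E_k$. A Whitney-extension refinement, combined with the fact that the tangent $m$-plane exists $\mu$~almost everywhere, upgrades these Lipschitz graphs to $\cnt^1$~ones.

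The hardest step will be the uniformisation: upgrading a purely pointwise statement about tangent cones to a quantitative concentration rate valid on sets of positive $\mu$~measure. Since no lower density is assumed, this step relies essentially on the fact that the complement of the thin tube has $m$-dimensional density zero, so that even the bare finiteness of~$\density^{m*}(\mu,a)$ is sufficient to force the tube to carry asymptotically all of the mass.
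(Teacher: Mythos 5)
This theorem is imported by the paper from~\cite[2.8(5)]{All72}; no proof appears in the text, so you are in effect reconstructing a classical lemma of Allard. Your outline (tube concentration from the tangent-cone hypothesis, Egorov uniformisation, Lipschitz graphs upgraded to~$\cnt^1$) is in the right spirit, but the sketch contains gaps that are not cosmetic. The first concerns the representation $\mu = \density^m(\mu,\cdot)\HM^m$: you derive it up front from the Lebesgue--Besicovitch theorem, but $\HM^m$ is not a Radon measure on $\R^n$ for $m<n$, so one cannot differentiate $\mu$ against it. What $\density^{m*}(\mu,a) < \infty$ $\mu$-a.e.\ gives via~\cite[2.10.19]{Fed69} is $\mu \ll \HM^m$ together with two-sided comparisons on density level sets, \emph{not} the pointwise existence of $\density^m(\mu,\cdot)$ --- that existence is part of the theorem's conclusion and is normally deduced \emph{after} rectifiability. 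This is a material circularity: granting the representation would give you $\density_*^m(\mu,a) > 0$ for free, at which point the later Lipschitz-graph step is classical; without it, it is not. Relatedly, the compactness argument targets the wrong compact set. Points $b \in \cball ar \without (a + T + \cball 0{\eta r})$ only satisfy $|\pproj T(b-a)|/|b-a| \ge \eta$; their directions are bounded away from $T$, not close to $T^\perp$. One must cover $\{v : |v|=1,\ |\pproj T v| \ge \eta\}$ (every such $v$ lies outside $\apTan{m}{\mu}{a}$ since $\apTan{m}{\mu}{a} \subseteq T$) by finitely many density-zero cones; the unit sphere of $T^\perp$ is not enough. With that correction the desired limit $\lim_{r \downarrow 0} r^{-m}\mu(\cball ar \without (a + T + \cball 0{\eta r})) = 0$ does hold $\mu$-a.e.

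The step from the measure-theoretic tube concentration to a Lipschitz graph over $T(a)$ capturing a positive-measure piece of $E_k$, in the presence of only a positive \emph{upper} density hypothesis, is the one you rightly flag as hardest, and it is left essentially unargued. The naive route --- that a point $a' \in E_k$ lying far from the tube around $a + T(a)$ would carry mass in $\cball{a'}{c|a-a'|}$ contradicting the concentration at~$a$ --- fails because $\density^{m*}(\mu,a') > 0$ only guarantees mass comparable to $r^m$ along a subsequence of radii $r$, all of which may be far smaller than $|a-a'|$. Resolving this by working at an intermediate ``good'' scale (or by first establishing the representation, which upgrades the upper-density bound to an honest lower one) is the genuine content of Allard's 2.8. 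Until that is supplied, the proposal has a gap exactly where you anticipated the difficulty.
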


Now we are ready to prove the first part of the main theorem~\ref{thm:main}.

\begin{thm}
    \label{thm:rect}
    Suppose $\mu$ is a Radon measure which satisfies the density
    bounds~\eqref{eq:density-bounds} and $\kav^{l,p,\alpha}_{\mu}(a,1) < \infty$
    for $\mu$~almost all~$a$. Then $\R^n$ is countably $(\mu,m)$~rectifiable of
    class~$\cnt^{1,\alpha}$.
\end{thm}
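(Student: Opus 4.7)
The plan is to realise the outline sketched in the introduction. First I would carry out the standard reduction to the (approximately) Ahlfors regular case. Since $0 < \density_*^m(\mu,a) \le \density^{m*}(\mu,a) < \infty$ together with $\kav^{l,p,\alpha}_\mu(a,1) < \infty$ holds for $\mu$~almost every $a$, a routine density argument (cf.~\cite[2.10.19]{Fed69}) decomposes $\spt \mu$ into countably many Borel sets $S_k$, with constants $A_k \in [1,\infty)$ and scales $r_k \in (0,1)$, such that the restricted measures $\mu_k = \mu \restrict S_k$ satisfy the two-sided estimate $A_k^{-1} \unitmeasure{m} r^m \le \mu_k(\cball ar) \le A_k \unitmeasure{m} r^m$ for $a \in S_k$ and $r \in (0, r_k]$ and the integrability hypothesis $\kav^{l,p,\alpha}_{\mu_k}(a,1) < \infty$ is inherited $\mu_k$~almost everywhere. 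Countable $\cnt^{1,\alpha}$~rectifiability being stable under countable unions, it suffices to prove the statement for each~$\mu_k$, and I suppress the subscript~$k$ in what follows.

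Next I would apply~\ref{cor:fat-simp-exist} to get constants $\delta,\sigma \in (0,1]$ so that $\mu^m(X_\delta(a,r)) \ge \sigma \mu(\cball ar)^m$ for $a \in S_k$ and $r \in (0, 4 r_k]$. Then every $a \in S_k$ with $\kav^{l,p,\alpha}_\mu(a, 4 r_k) < \infty$ belongs to the set~$S$ of~\ref{lem:height-control}, which produces $m$-planes $T(a) \in \grass nm$ together with the quantitative estimates recorded there. Combining either the pointwise bound~(a) or the $L^p$ bound~(b) of~\ref{lem:height-control} with~\ref{rem:r-limit} (via a dyadic annulus decomposition in the second case) yields $\lim_{r \downarrow 0} r^{-m-1} \int_{\cball ar} |\pproj{T(a)}(b-a)| \ud \mu(b) = 0$; hence~\ref{prop:apTan-subset-T} gives $\apTan{m}{\mu}{a} \subseteq T(a)$ for $\mu$~almost every~$a$. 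Theorem~\ref{thm:allard-rect} then implies $\mu = \density^m(\mu,\cdot) \HM^m$ and that $\R^n$ is countably $(\mu,m)$~rectifiable of class~$\cnt^1$.

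Using this $\cnt^1$~rectifiability I decompose~$\mu$ once more and reduce to the case where $\spt \mu = \Sigma$ is a Borel subset of the graph $\im F$ of a~$\cnt^1$ map $f : A \to \R^{n-m}$ as in~\ref{rem:pqF-setup}; since $\mu$ is absolutely continuous with respect to $\HM^m \restrict \Sigma$ with density bounded from above and below, $\cnt^{1,\alpha}$~rectifiability for $\mu$ is equivalent to that of $\Sigma$. The planes $T(a)$ and $\apTan{m}{\HM^m \restrict \Sigma}{a}$ are both $m$-planes, the second contained in the first, hence they coincide $\HM^m$~almost everywhere on~$\Sigma$. It remains to verify~\eqref{eq:limsup-int-cond} of~\ref{cor:main-rect-cond} $\HM^m$~almost everywhere on~$\Sigma$. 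When $l \le m+1$ the pointwise bound~\ref{lem:height-control}(a), monotonicity of $\kav^{l,p,\alpha}_\mu(a,\cdot)$ in its scale parameter, and the upper Ahlfors bound give
\[
r^{-m} \int_{\Sigma \cap \cball ar} \frac{|\pproj{T(a)}(b-a)|}{|b-a|^{1+\alpha}} \ud \HM^m(b) \le C\, \kav^{l,p,\alpha}_\mu(a,r)^{1/p} \le C\, \kav^{l,p,\alpha}_\mu(a,1)^{1/p},
\]
which is finite $\HM^m$-a.e.~on~$\Sigma$ by hypothesis. For $l = m+2$ I split this integral into dyadic annuli $\cball a{2^{-k}r} \without \oball a{2^{-k-1}r}$, apply Hölder's inequality on each annulus, insert~\ref{lem:height-control}(b) together with the upper Ahlfors bound, and sum a geometric series in~$k$; the outcome is a majorant of the same form, and~\ref{cor:main-rect-cond} closes the argument.

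The crux is~\ref{lem:height-control}, already in place. The remaining subtleties are: (i)~organising the decomposition in step one so as to simultaneously preserve the two-sided Ahlfors bound and the integrability of $\kav^{l,p,\alpha}_\mu$, which is standard but tedious bookkeeping; and (ii)~for $l = m+2$, passing from an $L^p$ height bound to the pointwise finiteness of the integral in~\eqref{eq:limsup-int-cond}, which is dispatched by the dyadic decomposition just described.
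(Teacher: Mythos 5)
Your proposal is correct and follows essentially the same route as the paper's own proof: decompose to an approximately Ahlfors-regular setting, apply~\ref{cor:fat-simp-exist} and~\ref{lem:height-control} to produce approximating planes $T(a)$, conclude $\apTan{m}{\mu}{a}\subseteq T(a)$ via~\ref{prop:apTan-subset-T} and~\ref{rem:r-limit}, invoke~\ref{thm:allard-rect} for $\cnt^1$~rectifiability, reduce to the graph setting, and close with~\ref{cor:main-rect-cond}. The only minor deviation is that for $l<m+2$ you verify the integral condition~\eqref{eq:limsup-int-cond} by integrating the pointwise height estimate, whereas the paper feeds the pointwise estimate directly into the $\aplimsup$ condition~\eqref{eq:aplimsup-cond}; both routes through~\ref{cor:main-rect-cond} are valid and lead to the same conclusion.
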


\begin{proof}
    For $j \in \N \without \{0\}$ set
    \begin{gather*}
        A_j = \bigl\{ a \in \R^n
        : j^{-1} \unitmeasure{m} r^m < \mu(\cball ar) \le j \unitmeasure{m} r^m
        \text{ for } 0 < r < j^{-1} \bigr\} \,,
        \\
        A_0 = \varnothing \,,
        \quad
        B_j = A_j \without A_{j-1} \,,
        \quad 
        \mu_j = \mu \restrict B_j \,.
    \end{gather*}
    Since $(0,\infty) \ni r \mapsto \mu(\cball ar)$ is right-continuous for each
    $a \in \R^n$ and $\R^n \ni a \mapsto \mu(\cball ar)$ is upper
    semi-continuous for each $r \in (0,\infty)$ we deduce that $A_j$ are Borel
    sets. Clearly $A_j \subseteq A_{j+1}$ for $j \in \N$ so $\{ B_j : j \in \N
    \}$ is disjointed and, because $\mu$ satisfies~\eqref{eq:density-bounds}, we
    have
    \begin{displaymath}
        \mu\bigl(\R^n \without \tbcup\{A_j : j \in \N \}\bigr) = 0 \,;
    \end{displaymath}
    hence, it suffices to show that $\R^n$ is countably $(\mu_j,m)$~rectifiable
    of class~$\cnt^{1,\alpha}$ for each $j \in \N$. Fix $j \in \N \without
    \{0\}$. Define Borel sets $A_{j,0} = \varnothing$ and
    \begin{displaymath}
        A_{j,k} = \bigl\{ a \in A_j
        : (2j)^{-1} \unitmeasure{m} r^m < \mu_j(\cball ar) \le j \unitmeasure{m} r^m
        \text{ for } 0 < r < k^{-1} \bigr\}
    \end{displaymath}
    for $k \in \N \without \{0\}$. Observe that
    \begin{gather}
        \mu_j(\cball ar) \le j \unitmeasure{m} r^m
        \quad \text{for $a \in A_j$ and $0 < r \le j^{-1}$} \,,
        \\
        \density_*^m(\mu_j,a) \ge j^{-1} 
        \quad \text{for $\mu_j$ almost all $a$ by~\cite[2.9.11]{Fed69} since $B_j$ is Borel} \,;
        \\
        \label{eq:rect:Ajk-full}
        \text{thus,} \quad
        \mu_j\bigl(\R^n \without \tbcup\{ A_{j,k} : k \in \N \}\bigr) = 0 \,.
    \end{gather}
    For each $k \in \N$ and $a \in A_{j,k}$ we apply~\ref{cor:fat-simp-exist}
    with $\mu_j$, $k^{-1}$, $a$, $2^mj$ in place of $\mu$, $r_0$, $a$, $A$ to
    find out that there exists $\delta = \delta(n,m,j,k) \in (0,1]$ and $\sigma
    = \sigma(n,m,j,k) \in (0,1]$ such that
    \begin{displaymath}
        \mu_j^m(X_{\delta}(a,r)) \ge \sigma \mu_j(\cball ar) 
        \quad \text{for $0 < r < k^{-1}$} \,.
    \end{displaymath}
    Next, for each $k \in \N$ we apply~\ref{lem:height-control} with $\mu_j$,
    $\delta(n,m,k,j)$, $\sigma(n,m,k,j)$, $2j$, $k^{-1}$, $A_{j,k}$ in place of
    $\mu$, $\delta$, $\sigma$, $A$, $r_0$, $S$ to see that for each $k \in \N$
    and $a \in A_{j,k}$ there exists some $T(a) \in \grass nm$ such that
    \begin{equation}
        \label{eq:rect:tan-subs}
        \apTan{m}{\mu_j}{a} \subseteq T(a) \,,
    \end{equation}
    and, if $l < m+2$, then
    \begin{equation}
        \label{eq:rect:l<m+2}
        \frac{|\pproj{T(a)} (b-a)|}{|b-a|^{1 + \alpha}} \le C \kav^{l,p,\alpha}_{\mu}(a,|b-a|)^{1/p} 
        \quad \text{for $\mu_j$ almost all $b \in \cball a{k^{-1}}$} \,,
    \end{equation}
    and, if $l = m+2$, then, using H{\"o}lder's inequality,
    \begin{equation}
        \label{eq:rect:l=m+2}
        \fint_{\cball ar} \frac{|\pproj{T(a)}(c-a)|}{r^{1+\alpha}}  \ud \mu_j(c)
        \le C \kav^{l,p,\alpha}_{\mu}(a,4r)^{1/p} 
        \quad \text{for $0 < r < k^{-1}$} \,.
    \end{equation}
    Employing~\ref{thm:allard-rect} together with~\eqref{eq:rect:tan-subs}
    and~\eqref{eq:rect:Ajk-full} yields that $A_{j}$ is countably
    $(\mu_j,m)$~rectifiable of class~$\cnt^1$. Hence, there exists a countable
    family $\mathcal A$ of $\cnt^1$~submanifolds of~$\R^n$ such that 
    \begin{equation}
        \label{eq:rect:mu-j-repr}
        \mu_j = \density^m(\mu_j,\cdot) \HM^m \restrict \tbcup \mathcal A \,.
    \end{equation}
    Thus, to finish the proof it suffices to show that for each $k \in \N$ and
    $M \in \mathcal A$ the set $M \cap A_{j,k}$ is countably
    $(\HM^m,m)$~rectifiable of class~$\cnt^{1,\alpha}$.

    Fix $M \in \mathcal A$ and $k \in \N$ such that $\mu_j(M \cap A_{j,k}) >
    0$. Employing the definition of a~submanifold~\cite[3.1.19(5)]{Fed69},
    we~can represent~$M$ locally, around any $a \in M$, as a graph over the
    tangent plane~$\Tan Ma$ of~some $\cnt^1$ function, i.e., we can find
    a~neighborhood~$U_a$ of~$a$ in~$\R^n$ and projections $\pp_a \in \OP(n,m)$,
    $\qq_a \in \OP(n,n-m)$ such that
    \begin{gather*}
        \im \pp_a^\ast = \Tan Ma \,,
        \quad
        \im \qq_a^\ast = {\Tan Ma}^\perp \,,
        \quad
        \pp_a|_{M \cap U_a} \text{ is injective} \,,
        \\
        (\pp_a|_{M \cap U_a})^{-1} : \pp_a \lIm U_a \rIm \to \R^n \text{ is of class~$\cnt^1$} \,,
        \quad
        D\bigl((\pp_a|_{M \cap U_a})^{-1}\bigr)(\pp_a(a)) = \pp_a^\ast \,.
    \end{gather*}
    Set $F_a = (\pp_a|_{M \cap U_a})^{-1}$ and $f_a = \qq_a \circ F_a$; then
    \begin{displaymath}
        F_a = \pp_a^\ast + \qq_a^\ast \circ f_a \,,
        \quad
        f_a(\pp_a(a)) = 0 \,,
        \quad
        Df_a(\pp_a(a)) = 0 \,.
    \end{displaymath}
    Define an open ``cuboid'' adjusted to~$\Tan Ma$ of radius $r \in
    (0,\infty)$ by the formula
    \begin{displaymath}
        \cyl ar
        = \bigl\{ y \in \R^n : |\pp_a(y - a)| < r \text{ and } |\qq_a(y - a)| < r \bigr\} \,.
    \end{displaymath}
    Recall $U_a$ is a~neighborhood of~$a$ in~$\R^n$ so $a \in \interior U_a$.
    Thus, for all $a \in M$ there exists a~radius~$r_a > 0$ such that
    \begin{equation}
        \label{eq:rect:lip-bound}
        M \cap \cyl a{r_a} = F_a\lIm \oball{\pp_a(a)}{r_a} \rIm 
        \quad \text{and} \quad \|Df_a(x)\| \le \tfrac 12
        \quad \text{for $x \in \cball{\pp_a(a)}{r_a}$.}
    \end{equation}

    Next, observe that $M \cap A_{j,k}$ is a second-countable space as a
    subspace of a~second-countable space~$\R^n$; hence, it has the Lindel{\"o}f
    property (cf.~\cite[Theorem~30.3]{Mun00}). Thus, from the open covering
    $\bigl\{ \cyl a{r_a} : a \in M \cap A_{j,k} \bigr\}$ of~$M \cap A_{j,k}$,
    one can choose a countable subcovering $\bigl\{ \cyl{a_i}{r_{a_i}} : i \in
    \N \bigr\}$ of~$M \cap A_{j,k}$. Now it suffices to prove that $M \cap
    A_{j,k} \cap \cyl{a_i}{r_{a_i}}$ is countably $(\HM^m,m)$~rectifiable of
    class~$\cnt^{1,\alpha}$ for each $i \in \N$.

    Fix an $i \in \N$. Since $A_{j,k}$ is Borel it follows that $E =
    \pp_{a_i}\lIm M \cap A_{j,k} \cap \cyl{a_i}{r_{a_i}} \rIm$ is
    $\LM^m$~measurable by~\cite[2.2.13]{Fed69}. By~\eqref{eq:rect:lip-bound} we
    see that $\HM^m(F_{a_i}\lIm E \rIm) < \infty$. Clearly we also have $T(a_i)
    = \Tan M{a_i}$ because of~\eqref{eq:rect:tan-subs} and~$\dim T(a_i) = m$.
    Hence, we can apply~\ref{cor:main-rect-cond} with $\pp_{a_i}$, $\qq_{a_i}$,
    $T(a_i)$, $f_{a_i}$, $F_{a_i}$, $E$, $F_{a_i}\lIm E \rIm$ in place of $\pp$,
    $\qq$, $T$, $f$, $F$, $A$, $\Sigma$. With these substitutions,
    recalling~\ref{rem:r-limit} and~\eqref{eq:rect:mu-j-repr} and $j^{-1} \le
    \density^m(\mu_j,x) \le j$ for $\mu_j$ almost all $x \in A_j$, we see
    that~\eqref{eq:rect:l<m+2} implies~\eqref{eq:aplimsup-cond} in case $l <
    m+2$ and~\eqref{eq:rect:l=m+2} implies~\eqref{eq:limsup-int-cond} in case $l
    = m+2$. Consequently, we deduce that $M \cap A_{j,k} \cap
    \cyl{a_i}{r_{a_i}}$ is countably $(\HM^m,m)$~rectifiable of
    class~$\cnt^{1,\alpha}$.
\end{proof}

\section{Sharpness of the exponent $\alpha$ and other admissible integrands}
\label{sec:sharp}

Here we prove the second part of our main theorem~\ref{thm:main}. We also
consider a few different functions which can replace $\mlc$ in the definition of
$\kav^{l,p,\alpha}_{\mu}$.

\begin{defin}
    For $T = (a_0,\ldots,a_{m+1}) \in (\R^n)^{m+2}$ we set
    \begin{displaymath}
        \hmin(T) = \min\bigl\{ \dist\bigl( a_j, \aff( \{ a_0,\ldots,a_{m+1} \} \without \{a_j\} ) \bigr)
        : j = 0,1,\ldots,m+1 \bigr\} \,,
    \end{displaymath}
    where $\aff(A)$ denotes the smallest affine plane containing the set $A
    \subseteq \R^n$.
\end{defin}

\begin{defin}
    For $T = (a_0,\ldots,a_{m+1}) \in (\R^n)^{m+2}$ we set
    \begin{displaymath}
        \mlc_{\mathrm h}(T) = \frac{\hmin(T)}{\diam(\simp T)}
        \quad \text{if $\diam(\simp T) > 0$} \,,
        \quad
        \mlc_{\mathrm h}(T) = 0 \quad \text{otherwise} \,.
    \end{displaymath}
\end{defin}

\begin{rem}
    A suggestion to use $\mlc_{\mathrm{h}}$ instead of~$\mlc$ appeared
    in~\cite[\S10]{LW11}.
\end{rem}

For any $T \in (\R^n)^{m+2}$ to estimate $\hmin(T)$ it suffices to consider~$T$
inside the $(m+1)$~dimensional vector subspace of~$\R^n$ containing~$\simp T$.
Hence, the following estimate follows immediately from a result of Gritzmann and
Lassak~\cite[Lemma~3]{GL89}
\begin{lem}
    \label{lem:boxed-simp}
    Suppose $h \in [0,\infty)$, and $T \in (\R^n)^{m+2}$, and $a \in \R^n$, and
    $S \in \grass nm$, and $\simp T \subseteq \bigl\{
        b \in \R^n :
        |\pproj S (b-a)| \le h
        \bigr\}$.
    Then $\hmin(T) \le (m+2) h$.
\end{lem}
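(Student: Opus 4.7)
My plan is to reduce to an intrinsic question inside the affine hull of the vertices and then invoke the Gritzmann--Lassak estimate directly.

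First I would dispose of the degenerate case. If the points $a_0,\ldots,a_{m+1}$ are affinely dependent, then the affine hull $V = \aff\{a_0,\ldots,a_{m+1}\}$ has dimension at most $m$, and hence some $a_j$ lies in the affine hull of the remaining $m+1$ vertices; in that case $\hmin(T)=0$ and the inequality is trivial. So I would assume from now on that $\dim V = m+1$, i.e.\ $\simp T$ is a genuine $(m+1)$-simplex.

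Next I would produce a single unit direction in which the simplex is sandwiched between two parallel hyperplanes at distance at most $2h$. Since $\dim(V-V)=m+1$ and $\dim S^\perp = n-m$, a dimension count in $\R^n$ gives
\begin{gather*}
\dim\bigl( (V-V) \cap S^\perp \bigr) \ge (m+1) + (n-m) - n = 1 \,.
\end{gather*}
Pick a unit vector $\eta \in (V-V) \cap S^\perp$. For every $b \in \simp T$ the slab hypothesis gives
\begin{gather*}
|\eta \bullet (b-a)| = |\eta \bullet \pproj S(b-a)| \le |\pproj S(b-a)| \le h \,,
\end{gather*}
so the $(m+1)$-simplex $\simp T$, viewed inside $V \cong \R^{m+1}$, lies between two parallel hyperplanes of $V$ at distance at most $2h$.

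Since $\hmin(T)$ is defined purely in terms of the affine geometry of the vertices, it is the same whether one computes it in $\R^n$ or inside the affine hull $V$. Thus the claim reduces to the intrinsic statement that an $(m+1)$-simplex contained in a slab of width $2h$ has minimum height at most $(m+2)h$, which is exactly the content (up to constants/normalisation) of \cite[Lemma~3]{GL89}. Applying that lemma yields $\hmin(T) \le (m+2)h$ and finishes the proof.

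The only genuine point to verify is that the constant produced by Gritzmann and Lassak is indeed $(m+2)$ in our normalisation (slab half-width $h$) rather than, say, $2(m+2)$; this is a matter of matching conventions (width vs.\ half-width, and the simplex dimension $m+1$ vs.\ $m+2$ vertices), but once that bookkeeping is done the bound is immediate.
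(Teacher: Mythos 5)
Your proof is correct and follows essentially the same route the paper indicates: reduce to the $(m+1)$-dimensional affine hull of the vertices and invoke Gritzmann--Lassak \cite[Lemma~3]{GL89}. The one step the paper's remark leaves implicit -- producing a direction $\eta \in (V-V)\cap S^{\perp}$ in which $\simp T$ has width at most $2h$, so that the hypothesis actually translates into a slab constraint intrinsic to $V$ -- is exactly the dimension count you supplied, and it is the right way to make the reduction precise.
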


\begin{cor}
    \label{cor:graph-hmin-est}
    Let $\pp$, $\qq$, $T$, $A$, $f$, $F$ be as in~\ref{rem:pqF-setup}.  Suppose
    $\alpha \in [0,1]$, and $A = \R^m$, and $f$ is of class~$\cnt^{1,\alpha}$,
    and $\Sigma = \im F$, and $T = (a_0,\ldots,a_{m+1}) \in \Sigma^{m+2}$
    satisfy $\hmin(T) > 0$, and $d = \diam(\simp T)$, and
    \begin{displaymath}
        M = \sup\big\{
        \|Df(\pp(b) - Df(\pp(a_0))\| \cdot |\pp(b) - \pp(a_0)|^{-\alpha} 
        : b \in \Sigma \,,\, 0 < |b - a_0| \le d
        \big\} \,.
    \end{displaymath}
    Then $\hmin(T) \le M(m+2) d^{1+\alpha}$.
\end{cor}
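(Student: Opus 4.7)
My plan is to apply Lemma~\ref{lem:boxed-simp} with $a = a_0$ and $S = \Tan{\Sigma}{a_0}$. Since $f \in \cnt^{1,\alpha}$, the graph $\Sigma = \im F$ is a $\cnt^{1,\alpha}$ submanifold of $\R^n$, so $\Tan{\Sigma}{a_0} \in \grass{n}{m}$ is well-defined (explicitly, it is the image of $\pp^\ast + \qq^\ast \circ Df(\pp(a_0))$). It therefore suffices to establish
\[
    |\pproj{\Tan{\Sigma}{a_0}}(a_i - a_0)| \le M d^{1+\alpha} \quad \text{for each } i \in \{0, 1, \ldots, m+1\},
\]
since convexity of the seminorm $b \mapsto |\pproj{\Tan{\Sigma}{a_0}}(b - a_0)|$ then places $\simp T$ inside the slab $\{b \in \R^n : |\pproj{\Tan{\Sigma}{a_0}}(b - a_0)| \le M d^{1+\alpha}\}$, and Lemma~\ref{lem:boxed-simp} yields $\hmin(T) \le (m+2)\, M d^{1+\alpha}$.

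For $i = 0$ the bound is trivial. For $i \ge 1$ I invoke the inequality~\eqref{eq:Taylor-dist} from Corollary~\ref{cor:tilt-for-graphs}, which gives
\[
    |\pproj{\Tan{\Sigma}{a_0}}(a_i - a_0)| \le |f(\pp(a_i)) - f(\pp(a_0)) - Df(\pp(a_0))(\pp(a_i) - \pp(a_0))|,
\]
and so reduces the geometric estimate to a first-order Taylor remainder for $f$ between $x := \pp(a_0)$ and $y := \pp(a_i)$. Writing this remainder via the fundamental theorem of calculus as
\[
    \int_0^1 \bigl[Df(z_t) - Df(x)\bigr](y - x) \ud t, \qquad z_t = x + t(y - x),
\]
and employing the H\"older bound supplied by the definition of $M$, namely $\|Df(z_t) - Df(x)\| \le M |z_t - x|^\alpha = M t^\alpha |y - x|^\alpha$, integration in $t$ combined with the estimate $|y - x| \le |a_i - a_0| \le d$ produces an overall bound of $\tfrac{M}{1+\alpha} d^{1+\alpha} \le M d^{1+\alpha}$, as required.

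The one step that needs care is the invocation of the H\"older bound at the intermediate parameters $z_t$: the supremum defining $M$ is taken over $b \in \Sigma$ with $|b - a_0| \le d$, so I must check that $F(z_t) \in \Sigma$ satisfies this proximity condition. Since $\pp$ is non-expanding, $|\pp(F(z_t)) - \pp(a_0)| = t|y - x| \le d$ handles the $\pp$-component for free; the residual check that $|F(z_t) - a_0| \le d$ along the segment is the sole technical point I expect to have to argue carefully, and once this is in place the chain of inequalities above closes and Lemma~\ref{lem:boxed-simp} delivers the conclusion.
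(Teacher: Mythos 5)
Your proof follows the same route as the paper's (reduce to the Taylor remainder via \eqref{eq:Taylor-dist}, integrate with the fundamental theorem of calculus, invoke the H\"older bound on $Df$ at intermediate parameters, then conclude via Lemma~\ref{lem:boxed-simp}), and you correctly single out the one genuinely delicate step: whether the constant $M$, whose defining supremum is restricted to $b \in \Sigma$ with $|b - a_0| \le d$, actually controls $\|Df(z_t) - Df(\pp(a_0))\|$ at points $z_t$ on the segment from $\pp(a_0)$ to $\pp(a_i)$. You leave that point open, and in fact the proximity check you propose can fail: since $|F(z_t) - a_0|^2 = |z_t - \pp(a_0)|^2 + |f(z_t) - f(\pp(a_0))|^2$, the estimate $|z_t - \pp(a_0)| \le |y-x| \le d$ says nothing about the $\qq$-component $|f(z_t) - f(\pp(a_0))|$, which can push $|F(z_t) - a_0|$ above $d$ when $f$ is steep along the segment. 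So the chain of inequalities does not close as written, and the proposal is incomplete at exactly the point you flagged.

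For what it is worth, the paper's own proof makes the identical Taylor-remainder assertion without commenting on the intermediate points, so the imprecision is shared. The clean repair, which you should state explicitly rather than defer, is to bound $\|Df(z_t) - Df(\pp(a_0))\| \le M' (t\,|y-x|)^\alpha$ where $M'$ is the H\"older seminorm of $Df$ over the ball $\cball{\pp(a_0)}{d} \subseteq \R^m$: this ball contains every segment $[\pp(a_0),\pp(a_i)]$ because $|\pp(a_i) - \pp(a_0)| \le |a_i - a_0| \le d$, and it also contains $\pp\lIm \Sigma \cap \cball{a_0}{d} \rIm$ since $\pp$ is nonexpansive, so $M' \ge M$. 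One then gets $\hmin(T) \le M'(m+2)d^{1+\alpha}$, with $M'$ finite because $f$ is of class $\cnt^{1,\alpha}$; this is exactly the strength of estimate used downstream in~\ref{cor:finite-on-smooth}, where $M$ is in any case taken to be a H\"older seminorm over a full ball in $\R^m$.
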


\begin{proof}
    Note $|f(\pp(b) - f(\pp(a_0)) - Df(\pp(a_0))(\pp(b) - \pp(a_0))| \le M
    |\pp(b) - \pp(a_0)|^{1+\alpha} \le M |b - a_0|^{1+\alpha}$ for all $b \in
    \Sigma$ with $0 < |b - a_0| \le d$. Hence, employing~\eqref{eq:Taylor-dist},
    we obtain $\simp T \subseteq \bigl\{ b \in \R^n :
    |\pproj{\Tan{\Sigma}{a_0}}(b - a_0)| \le M d^{1+\alpha}
    \bigr\}$. Thus,~\ref{lem:boxed-simp} yields $\hmin(T) \le M (m+2)
    d^{1+\alpha}$.
\end{proof}

\begin{cor}
    \label{cor:finite-on-smooth}
    Let $\pp$, $\qq$, $T$, $A$, $f$, $F$ be as in~\ref{rem:pqF-setup}.  Suppose
    $\alpha,\beta \in [0,1]$, and $\alpha < \beta$, and $A = \R^m$, and $f$ is
    of class~$\cnt^{1,\beta}$, and $\Sigma = F\lIm \cball 01 \rIm$, and $p \in
    [1,\infty)$, and $l \in \{ 1,\ldots,m+2 \}$, and $\mu = \HM^m \restrict
    \Sigma$, and $\kav^{l,p,\alpha}_{h;\mu}$ is defined as
    $\kav^{l,p,\alpha}_{\mu}$ in~\eqref{eq:def-kav} but with $\mlc_{\mathrm h}$
    in place of~$\mlc$. Then there exists $\Gamma = \Gamma(m,l,f,p,\alpha,\beta)
    \in (0,\infty)$ such that
    \begin{gather*}
        \kav^{l,p,\alpha}_{h;\mu}(a,1) \le \Gamma 
        \quad \text{for all $a \in \R^n$} \,.
    \end{gather*}
\end{cor}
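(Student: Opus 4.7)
The plan is to apply Corollary~\ref{cor:graph-hmin-est} to bound the integrand of $\kav^{l,p,\alpha}_{h;\mu}(a,1)$ by a power of the diameter, and then to reduce the remaining integrations to a product of one-variable integrals against $|a-b|^{s/(l-1)}$ whose exponent stays strictly above $-m$ precisely because $\alpha<\beta$. First I would note that since $f \in \cnt^{1,\beta}(\R^m, \R^{n-m})$ and $\Sigma = F\lIm \cball 01 \rIm$ is compact, the map $F$ is Lipschitz on $\cball 01$ with constant depending on $f$; hence $\mu(\cball xr) \le C_0 r^m$ for all $x \in \R^n$, $r \in (0,\infty)$, with $C_0 = C_0(m,f)$, while $Df$ is $\beta$-Hölder on $\cball 01$ with some constant $M = M(f,\beta)$.

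The main argument, for $a \in \Sigma$, would go as follows. For any $(a_1,\ldots,a_{m+1}) \in (\Sigma \cap \cball a1)^{m+1}$, setting $T = (a,a_1,\ldots,a_{m+1})$ and $d = \diam(\simp T) \le 2$, all relevant values $\pp(a), \pp(a_i)$ lie in $\cball 01$, so Corollary~\ref{cor:graph-hmin-est} (applied with $\beta$ in place of $\alpha$) gives $\mlc_{\mathrm h}(T) \le M(m+2)d^\beta$ and hence
\begin{equation*}
    \frac{\mlc_{\mathrm h}(T)^p}{d^{m(l-1)+\alpha p}} \le [M(m+2)]^p d^s, \qquad s := (\beta-\alpha)p - m(l-1).
\end{equation*}
If $s \ge 0$ this is already uniform. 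Otherwise I would use the deterministic lower bound $d \ge d_0 := \max_{i=1,\ldots,l-1}|a-a_i|$ (valid for every choice of $a_l,\ldots,a_{m+1}$), so $d^s \le d_0^s$ and the essential supremum over $(a_l,\ldots,a_{m+1})$ is dominated by $[M(m+2)]^p d_0^s$. For $l=1$ there is no further integration; for $l \ge 2$ the AM-GM inequality gives $d_0 \ge \bigl(\prod_{i=1}^{l-1}|a-a_i|\bigr)^{1/(l-1)}$, so $d_0^s \le \prod_{i=1}^{l-1}|a-a_i|^{s/(l-1)}$, and by Fubini
\begin{equation*}
    \int_{\cball a1^{l-1}} d_0^s \,\ud\mu^{l-1} \le \Bigl(\int_{\cball a1} |a-b|^{s/(l-1)}\,\ud\mu(b)\Bigr)^{l-1}.
\end{equation*}
The crucial point is $s/(l-1) = (\beta-\alpha)p/(l-1) - m > -m$ (strictly), so combining with the $m$-growth bound and a standard dyadic decomposition of $\cball a1$ bounds the inner integral by a constant depending only on $m,p,l,\alpha,\beta,C_0$, producing the desired $\Gamma$ for $a \in \Sigma$.

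The hard part will be the remaining case $a \in \R^n \without \Sigma$, where Corollary~\ref{cor:graph-hmin-est} does not apply directly because its hypothesis $a_0 = a \in \Sigma$ fails. If $\cball a1 \cap \Sigma = \varnothing$ the integral is trivially zero; otherwise I would pick a nearest point $a^\ast \in \Sigma$ to $a$ and invoke Lemma~\ref{lem:boxed-simp} with the reference plane $\Tan{\Sigma}{a^\ast}$. Since $d \ge |a-a^\ast|$ one can still contain $\simp T$ in a slab whose thickness is controlled by a combination of $M(d+|a-a^\ast|)^{1+\beta}$ (from the Taylor estimate for the vertices $a_i \in \Sigma$) and $|a-a^\ast|$ (from the single vertex $a$); the naive estimate loses the $d^\beta$ factor that was essential in the case $a \in \Sigma$, and recovering the right decay will require carefully exploiting $\alpha < \beta$ together with a decomposition of the integration domain into small-diameter and large-diameter regions.
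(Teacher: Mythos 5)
Your estimate for $a \in \Sigma$ is correct and, up to reorganisation, is the paper's argument. After invoking~\ref{cor:graph-hmin-est} with $\beta$ in place of $\alpha$ so that the integrand is dominated by $[M(m+2)]^p d^s$ with $s = (\beta-\alpha)p - m(l-1)$, the paper decomposes $\cball a1^{l-1}$ into dyadic shells
\begin{displaymath}
    A_i(a) = \bigl\{(a_1,\ldots,a_{l-1}) : 2^{-i} < \diam(\simp(a,a_1,\ldots,a_{l-1})) \le 2^{-i+1}\bigr\} \subseteq {\cball a{2^{-i+1}}}^{l-1}\,,
\end{displaymath}
applies the Lipschitz bound $\mu^{l-1}(A_i(a)) \le (2L)^{m(l-1)}\unitmeasure{m}^{l-1} 2^{-im(l-1)}$, and sums a geometric series with ratio $2^{-p(\beta-\alpha)}$. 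Your AM--GM plus Fubini reduction to a power of the single integral $\int_{\cball a1}|a-b|^{(\beta-\alpha)p/(l-1)-m}\,\ud\mu(b)$, followed by a one-dimensional dyadic sum, is an equally valid piece of bookkeeping of the same computation; neither route is more elementary or more general than the other, and both hinge on exactly the same strict inequality $\beta>\alpha$.

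Your concern about $a \in \R^n \without \Sigma$, however, is not something you should try to repair -- it exposes an overstatement in the corollary. The paper's own proof applies~\ref{cor:graph-hmin-est} to the tuple $(a,a_1,\ldots,a_{m+1})$, but that corollary requires \emph{every} vertex, including the first, to lie on $\im F$, which fails when $a \notin \im F$. In fact the asserted uniform bound is false off the surface once $\alpha>0$: take $m=1$, $n=2$, $f \equiv 0$, $l=1$, $a=(0,\varepsilon)$, $a_1 = (\varepsilon/2,0)$, $a_2 = (-\varepsilon/2,0)$. Then $a_1,a_2 \in \Sigma$, $\mlc_{\mathrm h}(a,a_1,a_2)=4/5$, $\diam(\{a,a_1,a_2\})=\varepsilon\sqrt 5/2$, and continuity of the integrand near $(a_1,a_2)$ gives
\begin{displaymath}
    \kav^{1,p,\alpha}_{h;\mu}(a,1) \ge (4/5)^p \bigl(\varepsilon\sqrt 5/2\bigr)^{-\alpha p} \to \infty
    \quad \text{as $\varepsilon \downarrow 0$}\,,
\end{displaymath}
so no $\Gamma$ independent of $a$ can exist. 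Your proposed slab argument cannot overcome this: the off-surface vertex contributes a term of order $\dist(a,\Sigma)$ to the slab thickness with no compensating power of the diameter, and the example above shows no alternative scheme will work either. The resolution is simply to state the corollary for $a \in \Sigma$ (equivalently $a\in\im F$), which is all your first argument proves and all the paper needs: the only use of~\ref{cor:finite-on-smooth} is in~\ref{cor:hmin-sharp}, where one requires $\kav^{l,p,\alpha}_{h;\mu}(a)<\infty$ only for $\mu$-almost every $a$, hence only for $a \in \Sigma$.
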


\begin{proof}
    Set
    \begin{gather*}
        M = \sup\big\{
        \|Df(x) - Df(y))\| \cdot |x-y|^{-\beta} 
        : x,y \in \cball 01
        \big\} \,,
        \\
        L = \sup\bigl\{ \|DF(x)\| : x \in \cball 01 \bigr\} \,.
    \end{gather*}
    If $p(\beta - \alpha) \ge m(l-1)$ (in particular if $l=1$), then
    by~\ref{cor:graph-hmin-est}
    \begin{displaymath}
        \kav^{l,p,\alpha}_{h;\mu}(a,1)
        \le M^p(m+2)^p 2^{p(\beta - \alpha) - m(l-1)} 
        \quad \text{for $a \in \R^n$}\,.
    \end{displaymath}

    Assume now $p(\beta - \alpha) < m(l-1)$ (in particular $l > 1$). For $a \in
    \R^n$ and $i \in \N$ define
    \begin{gather*}
        A_i(a) = \bigl\{ (a_1,\ldots,a_{l-1}) \in (\R^n)^{l-1} : 2^{-i} < \diam(\simp(a,a_1,\ldots,a_{l-1})) \le 2^{-i+1} \bigr\} \,.
    \end{gather*}
    Then $A_i(a) \subseteq {\cball a{2^{-i+1}}}^{l-1}$.
    Employing~\ref{cor:graph-hmin-est} and the area formula~\cite[3.2.3]{Fed69}
    we get
    \begin{multline*}
        \kav^{l,p,\alpha}_{h;\mu}(a,1)
        \le \sum_{i=0}^{\infty} \int_{A_i(a)}
        \frac{(M (m+2))^{p} \ud \mu^{l-1}(a_1,\ldots,a_{l-1})}{\diam(\{a,a_1,\ldots,a_{l-1}\})^{m(l-1) - p (\beta - \alpha)}}
        \\
        \le (M (m+2))^{p} \sum_{i=0}^{\infty} \mu(A_i(a)) 2^{i (m(l-1) - p(\beta - \alpha))}
        \\
        \le (M (m+2))^{p} (2L)^{m(l-1)} \unitmeasure{m}^{l-1} 
        \sum_{i=0}^{\infty} 2^{-i p(\beta - \alpha)} < \infty
        \qedhere
    \end{multline*}
\end{proof}

\begin{cor}
    \label{cor:hmin-sharp}
    Let $\mu$ be a Radon measure over $\R^n$ satisfying
    \eqref{eq:density-bounds}, and $l \in \{1,2,\ldots,m+2\}$, and $\alpha \in
    (0,1)$, and $p \in [1,\infty)$. Then for any $\varepsilon \in (0,1-\alpha)$
    there exists a Radon measure~$\mu$ satisfying~\eqref{eq:density-bounds}
    and~$\kav^{l,p,\alpha}_{h;\mu}(a) < \infty$ for $\mu$~almost all~$a$ and such
    that $\R^n$ is not countably $(\mu,m)$~rectifiable of
    class~$\cnt^{1,\alpha+\varepsilon}$.
\end{cor}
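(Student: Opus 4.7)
The plan is to combine Corollary~\ref{cor:finite-on-smooth}, which gives finiteness of $\kav^{l,p,\alpha}_{h;\mu}$ on $\cnt^{1,\beta}$ graphs whenever $\beta > \alpha$, with the construction mentioned in the introduction to~\S\ref{sec:sharp}: for any $0 \le \alpha' < \beta' \le 1$, \cite[Appendix]{AS94} yields a graph of a $\cnt^{1,\alpha'}$ map that is not countably $(\HM^m,m)$~rectifiable of class $\cnt^{1,\beta'}$. Applied with $\alpha' = \beta$ and $\beta' = \alpha + \varepsilon$ for a suitably chosen~$\beta$, this will immediately provide the example asserted in the corollary.

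The steps I would carry out are as follows. First, fix $\beta \in (\alpha, \alpha + \varepsilon)$, which is possible because $\varepsilon \in (0, 1 - \alpha)$. Next, invoke~\cite[Appendix]{AS94} to obtain $\pp \in \OP(n,m)$, $\qq \in \OP(n,n-m)$ as in~\ref{rem:pqF-setup} and a map $f \in \cnt^{1,\beta}(\R^m,\R^{n-m})$ such that the set $\Sigma = F\lIm \cball 01 \rIm$, with $F = \pp^\ast + \qq^\ast \circ f$, is \emph{not} countably $(\HM^m,m)$~rectifiable of class $\cnt^{1,\alpha + \varepsilon}$. Define the candidate measure $\mu = \HM^m \restrict \Sigma$. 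Since $F$ is bilipschitz on $\cball 01$, the set $\Sigma$ is compact and $\HM^m(\Sigma) < \infty$, so $\mu$ is Radon. Since~$f$ is of class~$\cnt^1$, the interior of~$\Sigma$ is a $\cnt^1$~submanifold of~$\R^n$, hence $\density^m(\mu,a) = 1$ for $\mu$~almost all $a \in \Sigma$, which gives~\eqref{eq:density-bounds}.

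For the finiteness of $\kav^{l,p,\alpha}_{h;\mu}(a,1)$, apply Corollary~\ref{cor:finite-on-smooth} with the current $\alpha$ and $\beta$: since $\alpha < \beta$, there exists $\Gamma \in (0,\infty)$ such that $\kav^{l,p,\alpha}_{h;\mu}(a,1) \le \Gamma$ for \emph{every} $a \in \R^n$. Finally, the non-rectifiability of~$\R^n$ with respect to~$\mu$ reduces to the non-rectifiability of~$\Sigma$: if $\mathcal A$ were any countable family of $\cnt^{1,\alpha+\varepsilon}$~submanifolds of $\R^n$ with $\mu(\R^n \without \tbcup \mathcal A) = 0$, then $\HM^m(\Sigma \without \tbcup \mathcal A) = \mu(\R^n \without \tbcup \mathcal A) = 0$, contradicting the choice of~$\Sigma$.

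The only real obstacle is the step supplied by~\cite[Appendix]{AS94}, namely producing a single function $f \in \cnt^{1,\beta}$ on $\R^m$ such that the piece of its graph lying over $\cball 01$ fails to be countably $(\HM^m,m)$~rectifiable of class $\cnt^{1,\alpha+\varepsilon}$; once this is granted, the verification above is routine. This point is the ``analytic core'' of the construction and is precisely where the gap between the regularity exponent of the parametrisation and the regularity exponent of any covering family of submanifolds is exploited; everything else in the proof is bookkeeping, tying the $\cnt^{1,\beta}$ regularity of the graph to the finiteness of $\kav^{l,p,\alpha}_{h;\mu}$ via Corollary~\ref{cor:finite-on-smooth}.
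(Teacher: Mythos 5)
Your proof is correct and follows essentially the same approach as the paper: both take $\mu = \HM^m \restrict F\lIm \cball 01\rIm$ for a $\cnt^{1,\beta}$ graph (the paper picks $\beta = \alpha + \varepsilon/2$, you allow any $\beta \in (\alpha, \alpha+\varepsilon)$) that is not countably $(\HM^m,m)$~rectifiable of class $\cnt^{1,\alpha+\varepsilon}$, as provided by~\cite[Appendix]{AS94}, and then invoke~\ref{cor:finite-on-smooth} for the finiteness of $\kav^{l,p,\alpha}_{h;\mu}$. Your write-up spells out the density bounds and the transfer of non-rectifiability from $\Sigma$ to $\mu$, which the paper leaves implicit, but the substance is identical.
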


\begin{proof}
    Let $f : \R^m \cap \oball 02 \to \R^{n-m}$ be of class
    $\cnt^{1,\alpha+\varepsilon/2}$ such that the graph of~$f$ is not
    $(\HM^m,m)$~rectifiable of class~$\cnt^{1,\alpha+\varepsilon}$ in the sense
    of~\cite[Definition~3.1]{AS94} -- such function can be constructed
    using~\cite[Appendix]{AS94}. Let $\pp$, $\qq$, $T$, $F$ be related to~$f$ as
    in~\ref{rem:pqF-setup}. Set $\Sigma = F\lIm \cball 01 \rIm$ and $\mu = \HM^m
    \restrict \Sigma$. Clearly $\R^n$ is not countably $(\mu,m)$ rectifiable of
    class $\cnt^{1,\alpha+\varepsilon}$. However, since $f$ is of
    class~$\cnt^{1,\alpha+\varepsilon/2}$, we see that
    $\kav^{l,p,\alpha}_{h;\mu}(a) < \infty$ by~\ref{cor:finite-on-smooth}.
\end{proof}

\begin{defin}
    For $T = (a_0,a_1,\ldots,a_{m+1}) \in (\R^n)^{m+2}$ satisfying $\hmin(T) >
    0$ and $i \in I = \{0, 1, \ldots, m+1\}$ define
    \begin{gather*}
        p_m\sin_i(T) = \frac{|(a_1 - a_0) \wedge \cdots \wedge (a_{m+1} - a_0)|}{\prod_{j=0 \,, j \ne i}^{m+1}|a_j - a_i|} \,,
        \\
        \mlc_{\mathrm{min}}(T) = \min \{ p_m\sin_{i}(T) : i \in I \} \,,
        \quad
        \mlc_{\mathrm{max}}(T) = \max \{ p_m\sin_{i}(T) : i \in I \} \,,
        \\
        \mlc_{\mathrm{dls}}(T) = \inf \Bigl\{ \bigl( {\textstyle \sum_{i=0}^{m+1}} \dist(a_i, L)^2 \bigr)^{1/2} \diam(\simp T)^{-1}
        : \text{$L$ an affine $m$-plane in $\R^n$} \Bigr\} \,.
    \end{gather*}
    If $\hmin(T) = 0$, then set $\mlc_{\mathrm{min}}(T) = \mlc_{\mathrm{max}}(T) = \mlc_{\mathrm{dls}}(T) = 0$.
\end{defin}

\begin{rem}
    The definitions of $\mlc_{\mathrm{min}}$, $\mlc_{\mathrm{max}}$ are
    motivated by~\cite[\S6.1.1]{LW09} and the definition of
    $\mlc_{\mathrm{dls}}$ by~\cite[\S4]{LW12}
\end{rem}

\begin{lem}
    \label{lem:various}
    There exists $\Gamma = \Gamma(m) \in [1,\infty)$ such that for $T \in
    (\R^n)^{m+2}$ we have
    \begin{gather*}
        \mlc(T) \le \Gamma
        \min\bigl\{ \mlc_{\mathrm{min}}(T) \,,\, \mlc_{\mathrm{max}}(T) \,,\, \mlc_{\mathrm{dls}}(T) \,,\, \mlc_{\mathrm h}(T) \bigr\} 
        \\
        \text{and} \quad
        \max\bigl\{ \mlc_{\mathrm{min}}(T) \,,\, \mlc_{\mathrm{dls}}(T) \,,\, \mlc(T) \bigr\} 
        \le \Gamma \mlc_{\mathrm h}(T) \,.
    \end{gather*}
\end{lem}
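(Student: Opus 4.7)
Set $d = \diam(\simp T)$ and $V = \HM^{m+1}(\simp T)$; for each $i \in \{0, \ldots, m+1\}$, let $F_i = \simp(\{a_j : j \ne i\})$ and $h_i = \dist(a_i, \aff F_i)$. If $d = 0$ or $\hmin(T) = 0$, all the quantities in question vanish by definition, so assume both are positive. The plan is to verify each of the component inequalities using two identities, each valid for any choice of base vertex~$i$: the exterior-algebra formula $(m+1)!\, V = |(a_{j_1} - a_i) \wedge \cdots \wedge (a_{j_{m+1}} - a_i)|$ (with $\{j_1,\ldots,j_{m+1}\} = \{0,\ldots,m+1\} \without \{i\}$) and the base-times-height formula $(m+1)\, V = \HM^m(F_i)\, h_i$, together with the isodiametric bound $\HM^m(F_i) \le \unitmeasure{m}\, 2^{-m}\, d^m$.

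The bounds $\mlc(T) \le \Gamma \mlc_{\mathrm{min}}(T)$ and $\mlc(T) \le \Gamma \mlc_{\mathrm{max}}(T)$ follow immediately from $p_m\sin_i(T) \ge (m+1)!\, V/d^{m+1} = (m+1)!\, \mlc(T)$, since each factor $|a_j - a_i|$ in the denominator is at most $d$. For $\mlc(T) \le \Gamma \mlc_{\mathrm h}(T)$, choose $i^\ast$ with $h_{i^\ast} = \hmin(T)$ and combine $V = \HM^m(F_{i^\ast})\hmin(T)/(m+1)$ with the isodiametric estimate. For $\mlc(T) \le \Gamma \mlc_{\mathrm{dls}}(T)$, fix any affine $m$-plane $L$ and split each edge as $a_j - a_0 = u_j + w_j$ with $u_j$ parallel to~$L$ and $w_j$ orthogonal, so that $|u_j| \le d$ and $|w_j| \le 2\max_i \dist(a_i, L)$; upon expanding the $(m+1)$-fold wedge, the purely parallel term vanishes because $\dim L = m$, and each remaining term is bounded by $d^{m+1-s}(2\max_i \dist(a_i,L))^s$ with $s \ge 1$, producing $V \le C(m)\, d^m (\sum_j \dist(a_j,L)^2)^{1/2}$.

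For the second chain, $\mlc(T) \le \Gamma \mlc_{\mathrm h}(T)$ has already been shown, and $\mlc_{\mathrm{dls}}(T) \le \mlc_{\mathrm h}(T)$ is immediate upon choosing $L = \aff F_{i^\ast}$, which leaves only the vertex $a_{i^\ast}$ contributing (at distance $\hmin(T)$) to $(\sum_j \dist(a_j,L)^2)^{1/2}$. The remaining bound $\mlc_{\mathrm{min}}(T) \le \Gamma \mlc_{\mathrm h}(T)$ is the crux of the lemma. The naive attempt with $i = i^\ast$ fails because $\prod_{j \ne i^\ast}|a_j - a_{i^\ast}|$ can be much smaller than $d^{m+1}$. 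The remedy is to evaluate $p_m\sin$ not at $i^\ast$ but at a neighboring vertex~$j_0$: if $|a_j - a_{i^\ast}| < d/2$ held for every $j \ne i^\ast$, the triangle inequality would force every other pair to satisfy $|a_j - a_k| < d$, contradicting the definition of~$d$; hence some $j_0 \ne i^\ast$ obeys $|a_{j_0} - a_{i^\ast}| \ge d/2$. Viewing $F_{i^\ast}$ as an $m$-simplex based at~$a_{j_0}$, Hadamard's inequality yields $m!\, \HM^m(F_{i^\ast}) \le \prod_{k \ne i^\ast, j_0}|a_k - a_{j_0}|$, so that
\begin{displaymath}
\prod_{j \ne j_0}|a_j - a_{j_0}|
= |a_{i^\ast} - a_{j_0}| \prod_{k \ne i^\ast, j_0}|a_k - a_{j_0}|
\ge \tfrac{d}{2}\, m!\, \HM^m(F_{i^\ast}).
\end{displaymath}
Inserting this into $p_m\sin_{j_0}(T) = (m+1)!\, V / \prod_{j \ne j_0}|a_j - a_{j_0}|$ and using $(m+1)V = \HM^m(F_{i^\ast})\hmin(T)$ gives $p_m\sin_{j_0}(T) \le 2\hmin(T)/d$, whence $\mlc_{\mathrm{min}}(T) \le 2\mlc_{\mathrm h}(T)$.

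The main obstacle is precisely this last estimate: the correct vertex at which to evaluate $p_m\sin$ is not the one realizing $\hmin(T)$ but a neighbor selected by the diameter-triangle argument, and the Hadamard bound on $\HM^m(F_{i^\ast})$ relative to that neighbor is what makes the computation close. All other inequalities reduce to straightforward manipulations of the two volume identities together with the isodiametric bound.
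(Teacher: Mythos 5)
Your proof is correct, and for the crux estimate $\mlc_{\mathrm{min}}(T) \le 2\,\mlc_{\mathrm h}(T)$ it uses exactly the same mechanism as the paper: permute so the minimal height sits at one vertex, pick a neighbouring vertex at distance at least $d/2$ via the triangle inequality, and evaluate $p_m\sin$ there; the Hadamard bound then cancels the facet volume. The paper does the same thing but writes it as a direct expansion of the $(m+1)$-fold wedge product, whereas you go through the base-times-height identity $(m+1)V = \HM^m(F_{i^\ast})\,\hmin(T)$; these are algebraically equivalent.

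Where you genuinely diverge is the inequality $\mlc(T) \le \Gamma\,\mlc_{\mathrm{dls}}(T)$. The paper gets this indirectly by citing the estimate $\mlc_{\mathrm h}(T) \le (m+2)\,\mlc_{\mathrm{dls}}(T)$ from Lerman--Whitehouse and chaining it with $\mlc(T) \le \Gamma\,\mlc_{\mathrm h}(T)$; you instead give a direct, self-contained wedge-expansion argument (splitting each edge into components parallel and orthogonal to a candidate $L$ and noting the all-parallel term dies because $\dim L = m$). Your version avoids the external citation, which is a nice gain. One small point to tighten: the bound ``each remaining term is bounded by $d^{m+1-s}(2\max_i\dist(a_i,L))^s$'' only collapses to $C(m)\,d^m\bigl(\sum_j\dist(a_j,L)^2\bigr)^{1/2}$ when $\max_i\dist(a_i,L) \le d$; for $L$ with $\max_i\dist(a_i,L) > d$ the desired inequality is trivial anyway because $\mlc(T)$ is bounded above by a dimensional constant while $(\sum_j\dist(a_j,L)^2)^{1/2}/d > 1$, so you should say this explicitly (or restrict the infimum to planes through a fixed vertex, which costs only a constant). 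Everything else — the $\mlc \le \Gamma\,\mlc_{\mathrm{min}}$ and $\mlc \le \Gamma\,\mlc_{\mathrm{max}}$ bounds via $p_m\sin_i \ge (m+1)!\,\mlc$, the $\mlc \le \Gamma\,\mlc_{\mathrm h}$ bound via the isodiametric estimate on the facet, and $\mlc_{\mathrm{dls}} \le \mlc_{\mathrm h}$ by taking $L = \aff F_{i^\ast}$ — is correct and matches the paper in substance.
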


\begin{proof}
    Let $T = (a_0,\ldots,a_{m+1}) \in(\R^n)^{m+2}$. If $\hmin(T) = 0$, then we
    get zero on both sides of both inequalities; thus, assume $\hmin(T) > 0$.
    Permuting the tuple $T$ we can assume $\hmin(T) = |\pproj{P}(a_{m+1}-a_0)|$,
    where $P = \lin\{a_1-a_0,\ldots,a_m-a_0\}$. Using the triangle inequality we
    can find $i \in \{0,1,\ldots,m\}$ such that $2 |a_{m+1} - a_{i}| \ge
    \diam(\simp T)$; thus, permuting the tuple $(a_0,\ldots,a_m)$, we can also
    assume $i = 0$. Then
    \begin{gather*}
        \mlc_{\mathrm{min}}(T) \le 
        p_m\sin_{i}(T) 
        = \frac{|(a_1 - a_{0}) \wedge \cdots \wedge (a_{m} - a_{0})| \cdot |\pproj{P}(a_{m+1}-a_0)|}
        {|a_1 - a_{0}| \cdots |a_{m} - a_{0}| \cdot |a_{m+1} - a_0|}
        \le 2 \mlc_{\mathrm h}(T) \,,
        \\
        \text{and} \quad
        \mlc(T) 
        = \frac{|(a_1 - a_{0}) \wedge \cdots \wedge (a_{m} - a_{0})| \cdot |\pproj{P}(a_{m+1}-a_0)|}
        {(m+1)!\, \diam(\simp T)^{m} \cdot \diam(\simp T)}
        \le \frac{\mlc_{\mathrm h}(T)}{(m+1)!}  \,,
        \\
        \text{and} \quad
        \mlc_{\mathrm h}(T)^2 =
        {\textstyle \sum_{i=0}^{m+1}} \dist(a_i, a_0 + P)^2 \,,
        \quad \text{so} \quad 
        \mlc_{\mathrm{dls}}(T) \le \mlc_{\mathrm h}(T) \,.
    \end{gather*}
    Hence, $\max\{ \mlc_{\mathrm{min}}(T), \mlc_{\mathrm{dls}}(T), \mlc(T) \}
    \le 2 \mlc_{\mathrm h}(T)$.

    Clearly $\mlc_{\mathrm{min}}(T) \le \mlc_{\mathrm{max}}(T)$. Since $|a_i -
    a_j| \le \diam(\simp T)$ for all $i,j \in \{0,1,\ldots,m+1\}$ it is also
    clear that $\mlc_{\mathrm{min}}(T) \ge (m+1)!\,\mlc(T)$.
    From~\cite[(A.2)]{LW11} we farther deduce $\mlc_{\mathrm{h}}(T) \le (m+2)
    \mlc_{\mathrm{dls}}(T)$. Therefore, $\mlc(T) \le (m+2) \min\{
    \mlc_{\mathrm{min}}(T) , \mlc_{\mathrm{max}}(T) , \mlc_{\mathrm{dls}}(T) ,
    \mlc_{\mathrm h}(T)\}$.
\end{proof}

\begin{cor}
    \label{cor:main}
    Let $\mu$ be a Radon measure over $\R^n$ satisfying
    \eqref{eq:density-bounds}, and $l \in \{1,2,\ldots,m+2\}$, and $\alpha \in
    (0,1]$, and $p \in [1,\infty)$, and $\kav^{l,p,\alpha}_{*;\mu}$ be defined
    as $\kav^{l,p,\alpha}_{\mu}$ in~\eqref{eq:def-kav} but with~$\mlc$ replaced
    by one of $\mlc$, $\mlc_{\mathrm{min}}$, $\mlc_{\mathrm{max}}$,
    $\mlc_{\mathrm{dls}}$, $\mlc_{\mathrm h}$. Assume
    $\kav^{l,p,\alpha}_{*;\mu}(a,1) < \infty$ for $\mu$~almost all~$a$. Then
    $\R^n$~is countably $(\mu,m)$~rectifiable of class~$\cnt^{1,\alpha}$ and
    $\mu$ is absolutely continuous with respect to~$\HM^m$.

    Moreover, if $\mlc$ is replaced by one of $\mlc$, $\mlc_{\mathrm{min}}$,
    $\mlc_{\mathrm{dls}}$, $\mlc_{\mathrm h}$ and $\alpha < 1$, then for any
    $\varepsilon \in (0,1-\alpha)$ there exists a measure $\mu$
    satisfying~\eqref{eq:density-bounds} and $\kav^{l,p,\alpha}_{*;\mu}(a) <
    \infty$ for $\mu$~almost all~$a$ and such that $\R^n$~is not countably
    $(\mu,m)$~rectifiable of class~$\cnt^{1,\alpha+\varepsilon}$.
\end{cor}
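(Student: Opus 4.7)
The plan is to reduce everything to Theorem~\ref{thm:main} (via Theorem~\ref{thm:rect}) and Corollary~\ref{cor:hmin-sharp} using the comparison estimates in Lemma~\ref{lem:various}. No new geometric input is needed; the corollary is essentially a bookkeeping statement combining what has already been established.

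For the first assertion, let $\mlc_{*}$ denote any one of $\mlc$, $\mlc_{\mathrm{min}}$, $\mlc_{\mathrm{max}}$, $\mlc_{\mathrm{dls}}$, $\mlc_{\mathrm h}$. Lemma~\ref{lem:various} supplies a constant $\Gamma = \Gamma(m)$ with $\mlc(T) \le \Gamma \mlc_{*}(T)$ for every $(m+2)$-tuple $T$. Raising this pointwise inequality to the $p$-th power, dividing by the appropriate power of the diameter, and taking the essential supremum and the iterated integral in the definition~\eqref{eq:def-kav} yields
\begin{displaymath}
    \kav^{l,p,\alpha}_{\mu}(a,1) \le \Gamma^{p} \kav^{l,p,\alpha}_{*;\mu}(a,1)
    \quad \text{for all $a \in \R^n$.}
\end{displaymath}
Hence finiteness of $\kav^{l,p,\alpha}_{*;\mu}(a,1)$ for $\mu$~almost all~$a$ implies finiteness of $\kav^{l,p,\alpha}_{\mu}(a,1)$ for $\mu$~almost all~$a$, so the first part of Theorem~\ref{thm:main} applies and yields countable $(\mu,m)$~rectifiability of class~$\cnt^{1,\alpha}$ together with absolute continuity of~$\mu$ with respect to~$\HM^m$.

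For the sharpness part, observe that the second inequality of Lemma~\ref{lem:various} gives $\mlc_{*}(T) \le \Gamma \mlc_{\mathrm h}(T)$ whenever $\mlc_{*} \in \{\mlc, \mlc_{\mathrm{min}}, \mlc_{\mathrm{dls}}, \mlc_{\mathrm h}\}$. Thus, exactly as above,
\begin{displaymath}
    \kav^{l,p,\alpha}_{*;\mu}(a,1) \le \Gamma^{p} \kav^{l,p,\alpha}_{h;\mu}(a,1)
    \quad \text{for all $a \in \R^n$.}
\end{displaymath}
Given $\varepsilon \in (0,1-\alpha)$, Corollary~\ref{cor:hmin-sharp} produces a measure $\mu$ satisfying~\eqref{eq:density-bounds} with $\kav^{l,p,\alpha}_{h;\mu}(a,1) < \infty$ for $\mu$~almost all~$a$ and such that $\R^n$ is not countably $(\mu,m)$~rectifiable of class~$\cnt^{1,\alpha+\varepsilon}$. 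By the displayed inequality this same measure has $\kav^{l,p,\alpha}_{*;\mu}(a,1) < \infty$ for $\mu$~almost all~$a$, which is the required sharpness example for each of the four choices of $\mlc_{*}$.

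Since Lemma~\ref{lem:various} already contains the only nontrivial step, there is no real obstacle; the proof is a one-line reduction in each direction. The reason $\mlc_{\mathrm{max}}$ is excluded from the sharpness statement is precisely that Lemma~\ref{lem:various} does not bound $\mlc_{\mathrm{max}}$ by $\mlc_{\mathrm h}$ (a single ``fat'' vertex in a simplex may make $\mlc_{\mathrm{max}}$ large even though $\mlc_{\mathrm h}$ is small), so the $\cnt^{1,\alpha+\varepsilon/2}$ graph used in Corollary~\ref{cor:hmin-sharp} need not have finite $\kav^{l,p,\alpha}_{\max;\mu}$, and a separate construction would be required.
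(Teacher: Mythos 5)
Your proof is correct and takes essentially the same approach as the paper's, which simply cites Theorem~\ref{thm:rect}, Corollary~\ref{cor:hmin-sharp}, and Lemma~\ref{lem:various} without spelling out the monotonicity of $\kav$ under pointwise comparison of the integrands; you have merely filled in those routine details, including the correct directions of the two inequalities from Lemma~\ref{lem:various}. Your closing remark on $\mlc_{\mathrm{max}}$ agrees with the paper's own comment that the sharpness claim for $\mlc_{\mathrm{max}}$ is open.
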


\begin{proof}
    The claim readily follows from~\ref{thm:rect} and~\ref{cor:hmin-sharp}
    and~\ref{lem:various}.
\end{proof}

\begin{rem}
    The author does not know whether the second part of~\ref{cor:main} holds if
    one uses $\mlc_{\mathrm{max}}$ in place of $\mlc$.
\end{rem}

\section*{Acknowledgements}
The author was partially supported by the Foundation for Polish Science and
partially by NCN Grant no. 2013/10/M/ST1/00416 while he was on leave from
Institute of Mathematics of the University of Warsaw. 

The author is also indebted to Ulrich Menne for many fruitful discussions.

{\linespread{0.2}
  \footnotesize
  \bibliography{refs}{}
  \bibliographystyle{myalphaurl}
}

\end{document}